\def\cT{{\mathcal T}}
\newtheorem{lemma}{Lemma}[section]
\newtheorem{theorem}[lemma]{Theorem}
\newtheorem{corollary}[lemma]{Corollary}
\newtheorem{proposition}[lemma]{Proposition}
\theoremstyle{definition}
\newtheorem{definition}[lemma]{Definition}
\theoremstyle{remark}
\newtheorem{remark}[lemma]{Remark}
\newtheorem{example}[lemma]{Example}
\numberwithin{equation}{section}
\newcommand{\Z}{{\mathbb Z}}
\newcommand{\R}{{\mathbb R}}
\newcommand{\N}{{\mathbb N}}
\newcommand{\E}{{\mathbb E}}
\newcommand{\cL}{\mathcal{L}}
\newcommand{\cA}{\mathcal{A}}
\definecolor{vert}{cmyk}{0.8,0,1,0}
\newcommand{\tr}[1]{\vphantom{#1}^t #1}
\theoremstyle{definition}
\title{Coboundaries and eigenvalues of finitary $S$-adic systems}
\author[V.~Berth\'e]{Val\'erie Berth\'e}
\address{Universit\'e de Paris, IRIF, CNRS, F-75013 Paris, France}
\email{berthe@irif.fr}
\author[P.~Cecchi Bernales]{Paulina Cecchi Bernales}
\address{Departamento de Matemáticas,
Facultad de Ciencias,
Universidad de Chile,
Santiago, Chile}
\email{ pcecchi@uchile.cl}
\author[R.~Yassawi]{ Reem Yassawi}
\address{ School of Mathematical Sciences, Queen Mary University of London, United Kingdom}
\email{r.yassawi@qmul.ac.uk}
\thanks{This work was supported by the Agence Nationale de la Recherche and  the FWF Austrian Science Fund  through the project  ``SymDynAr''  (ANR-23-CE40-0024-01), and the
   EPSRC  grants, numbers  EP/V007459/2 and EP/S010335/1. The second author was supported by ANID/Fondecyt Postoctorado 3210746, and partially funded by Centro de Modelamiento Matemático (CMM), ACE210010 and FB210005, BASAL funds for centers of excellence from ANID-Chile and ECOS-ANID grant C21E04 (ECOS210033).
 }
\date{\today}
\keywords{symbolic dynamics; substitutions; monoid morphisms; $S$-adic shifts; spectral theory; eigenvalues; coboundaries; Cobham's theorem; Toeplitz shifts.}
\subjclass[2010]{37B10, 05A05}
\begin{document}

\begin{abstract}

An $S$-adic system is a symbolic dynamical system  generated by iterating an infinite sequence of   substitutions or morphisms, called a directive sequence.
A {\em finitary} $S$-adic  dynamical system is one where the directive sequence consists of morphisms selected from a  finite set.
We study  eigenvalues and coboundaries for  finitary recognizable $S$-adic dynamical systems, i.e., those where points can be uniquely desubstituted using the given sequence of morphisms.
  To do this we identify the notions  of {\em straightness}  and  {\em essential} words, and use them to define a coboundary, inspired by  of Host's formalism, which  allows us to  
 express  necessary and    sufficient conditions that  a complex number must satisfy in order to be a continuous or measurable eigenvalue.
 We then  apply our results to   finitary  directive sequences   of substitutions of constant length, and show  how  to create constant-length $S$-adic shifts with non-trivial coboundaries. We show  that  in this case all continuous eigenvalues  are rational and we give a complete description of the rationals that can be an eigenvalue, indicating how this leads to a Cobham-style result for these systems.\end{abstract} 

\maketitle

\section{Introduction}

The Rokhlin-Kakutani lemma in ergodic theory allows one to approximate,  up to a set of small measure, a measure-preserving dynamical system by a partition of measurable sets, depicted as a tower, where the transformation consists of moving up the tower, see, e.g., \cite[Lemma 4.7]{Petersen:1983}.
{\em Recognizable} $S$-adic dynamical systems are a family of symbolic  systems which can be approximated by well-controlled Rokhlin towers,
where the instructions of how to construct finer Rokhlin towers from coarser ones are defined  by a {\em directive sequence} $(\sigma_n)_{n\geq 0}$ of {\em morphisms} or {\em substitutions}.
 In a previous work \cite{BSTY},
  recognizability for sequences of morphisms  was studied, and  conditions which guaranteed it were identified. In this article, we apply these results by
studying (dynamical)  eigenvalues  for  recognizable $S$-adic dynamical systems,  where the directive sequence consists of morphisms selected from a  finite set. 
 We will call such directive sequences {\em finitary}  (see Section \ref{preliminaries} for definitions of recognizability and substitutions).  
 
  There are various related notions in the measurable and topological dynamics literature that produce recognizable $S$-adic systems. In particular,   {\em cutting-and-stacking}  measurable systems on a Lebesgue space (see e.g.  \cite{Katok-Stepin-1967, Ferenczi:1997}) are built with recursive instructions which can be interpreted as  a directive sequence, and provided the latter is recognizable, the $S$-adic shift is a symbolic representation of the cutting-and-stacking system.   Also, under the condition that the morphisms $\sigma$  in the directive sequence are    {\em proper}, i.e., $\sigma(a)$ starts with the same letter  and ends with the same letter  for each  letter $a$, recognizable $S$-adic systems can be directly seen as  topological  Bratteli-Vershik systems, see  \cite{BSTY}, and \cite{Durand-Host-Skau} for the stationary case.    
  
  Moreover, the  model 
  theorem from  \cite[Theorem 4.7]{HPS:1992}  guarantees us  a proper Bratteli-Vershik representation for minimal systems, but  we cannot usually obtain such a representation by simple manipulations of the data we are given in the form of  the directive sequence: an $S$-adic system is not generally  topologically conjugate to the {\em natural} Bratteli--Vershik system associated to the directive sequence, although they are almost--conjugate (see Theorem \ref{thm:Bratteli-Vershik}  and \cite{BSTY} for details). The manipulations required to obtain a proper Bratteli-Vershik representation are already complicated even in the stationary case, see \cite{Durand-Host-Skau, Bezugly:2009}.    More generally, even  if finitary $S$-adic subshifts  are known to be  representable in as a finite rank Bratteli-Vershik system, and  therefore   represented   in an $S$-adic and proper way, see \cite{Espinoza, GH:22},  the construction allowing one to go  from a finitary $S$-adic representation to a proper $S$-adic one is not totally effective. 
 The approach of this article is to work directly with the given $S$-adic  description of the shift under study.  As there are several important results concerning the eigenvalues of the latter families of systems, we describe throughout how our findings intersect or extend the existing literature. 

 A {\em substitution shift} $(X_{\sigma},T)$ is an $S$-adic dynamical system where the  directive sequence  $(\sigma_n)_{n\geq 0}$ is constant, i.e.,  there exists a substitution $\sigma$ such that $\sigma_n=\sigma$ for each $n$.
Our departure point for this work is Host's article \cite{Host:1986}, where he identifies {\em coboundaries} as an important tool for describing  eigenvalues of a {\em primitive} substitution shift $(X_{\sigma},T)$, which allowed him moreover 
to prove   the striking result that   measurable eigenvalues are  continuous eigenvalues. We use here the term coboundary following Host's definition, which is not to be confused  with the usual meaning of a  coboundary, i.e.,  a function of the form $f- f\circ T$. A coboundary is basically partial information that identifies the relationship between the values that a putative eigenfunction  $f$ can take on the orbit of some select set of points $\mathcal D$ called {\em limit words}, see Definition \ref{def:limit-word}. In the substitution case, $\mathcal D$ is the finite set of {\em $ \sigma $-periodic points}, and primitivity ensures that the $T$-orbit of any of these points is dense in $X_{\sigma}$. For a primitive substitution $\sigma$, Host shows that any coboundary defines a continuous eigenvalue,   i.e., one with a continuous eigenfunction, and every measurable eigenvalue, i.e., one with a measurable eigenfunction,  defines a coboundary, and therefore is a  topological eigenvalue; see Theorem \ref{Host}.  In this proof, Host identifies that for $\lambda$ to be an eigenvalue, $\lambda^{|\sigma^n(a)|}$ must converge to a coboundary   for each letter $a$, but in fact he needs, and shows, the stronger fact that this convergence must be geometric.

In this paper,  we extend Host's notion of a coboundary to the finitary and {\em straight} $S$-adic setting.   When moving from a substitution shift to an $S$-adic shift, we move from working with one language to working with a sequence of languages; this is reflected in Definition \ref{def:Sadic-coboundary-one-sided} for the notion  of  a coboundary. 
  One other novelty that appears in our  definition of a coboundary is the requirement that the directive sequence be {\em straight}  (this notion is  introduced in Section \ref{sec:straight}.) This is simply so that there is a well-defined set of points $ \mathcal D$  of limit words as above, on which to partially define an eigenfunction.  Straight directive sequences abound, for example, all stationary directive sequences  can be uniformly  telescoped to be straight; in fact in \cite{Host:1986},  this is a property that is used. In Section \ref{subsec:ex} we show that many well-studied families of substitutions define straight directive sequences.

  We   investigate  the distinction    between continuous and measurable   eigenvalues  in the $S$-adic case,   a program which was   
    initiated  in \cite{Host:1986} for the substitutive case.  We  show in  Section \ref{sec:coboundary-directive}   (devoted to the continuous case),  and more particularly   in Theorems \ref{Host-continuous-bounded-onesided}  and  \ref{thm:sufficient-continuous-A}   that, with the given conditions, coboundaries define continuous eigenvalues.
  Moreover, in    Theorem \ref{thm:sufficient-continuous-A}, 
    we show that if the  convergence to the coboundary  is fast,   then the coboundary defines an eigenvalue,   and in  Section \ref{sec:Pisot}, we deduce  from  stronger convergence properties  that  eigenvalues associated to trivial coboundaries  are easy to compute, in the sense   that they are  	related to
measures of letter cylinders. In Section \ref{measurable}, Theorem \ref{thm:sufficient-mble}  gives  
a necessary condition for the existence of a measurable eigenvalue, to be compared to the condition in Theorem \ref{thm:sufficient-continuous-A}.

 Sufficient conditions  (both for the continuous and measurable case)  for  being an eigenvalue  are expressed  as summation-type  conditions; such conditions are  natural and classical,   and appear in several places  in the literature, for example in \cite[Corollary 15.57]{Nadkarni-1998}.
  In particular,  Theorem \ref{thm:sufficient-mble}  
 can be seen as an extension of \cite{CDHM:2003}, where the authors study finitary,  proper 
 directive sequences which define  {\em linearly recurrent }$S$-adic systems.  Note that the condition  that the morphisms are proper ensures that the only coboundary is the trivial one, which is why their result makes no mention of coboundaries whilst ours does;  we comment on why this is the case  in 
 Proposition \ref{prop:proper-implies-trivial-coboundary}. More generally, let us refer to the  important literature  devoted to the study of  dynamical eigenvalues of Cantor;
 see e.g.  \cite{Bressaud-Durand-Maass:2005,Bressaud-Durand-Maass:2010,CDHM:2003,Durand-Frank-Maass:2015,DFM:19}.

We then apply our results in Section \ref{constant-length-S-adic} to the special family of $S$-adic shifts defined by directive sequences  of constant-length substitutions. This is a natural family to consider in this context,   as Host's notion of a coboundary generalises the notion of {\em height} for constant-length substitutions, and indeed it is for this family that we most easily find non-trivial coboundaries. Here also, we show in Example \ref{ex:nontrivial height}
 that it is not difficult to create constant-length $S$-adic shifts with non-trivial coboundaries. In the case where the directive sequence is also finitary and satisfies some mild assumption, we show in  Theorem \ref{cor:fully-essential-constant-coboundarybis} that all continuous eigenvalues are rational, and we give a complete description of  the rationals that can be an eigenvalue. This allows us to conclude with  an $S$-adic  version of  Cobham's theorem in Corollary \ref{cor:cobham}.
 
Some results concerning sequences of constant-length substitutions exist in the finite rank literature, namely, work by Mentzen in \cite{Mentzen:1991}, who shows that {\em uniform exact} finite rank systems have only rational measurable eigenfunctions; see Section \ref{measurable} for the relevant definitions. Also, in the case where all the substitutions are  proper, we can look at the literature concerning eigenvalues of {\em Toeplitz} shifts, given  by Durand, Frank and Maass in \cite{Durand-Frank-Maass:2015}, with relevant results also in \cite{CDHM:2003} and \cite{Bressaud-Durand-Maass:2005}.  The results in the latter works for constant length $S$-adic shifts intersect ours in the family of Toeplitz shifts; see Section \ref{Toeplitz-S-adic} for definitions and a comparison of our results to these aforementioned works. We also give in Theorem \ref{thm:discrete-spectrum}  a generalisation of a result of  Kamae,  and Dekking   \cite{Kamae:1972, Dekking:1977}, which gives broad conditions that ensure that a constant-length $S$-adic system is Toeplitz.

 We illustrate our work with the running Example \ref{ex:first}, which gives an example of a finitary strongly primitive and non-proper $S$-adic shift which is not linearly recurrent, and  whose spectrum can be explicitly described using our methods. This example is from Durand's work    \cite{Durand:00a, Durand:00b}.

 The study of  eigenvalues for  substitutions has given rise to an abundant literature.  
 The approach developed in  \cite{Ferenczi-Mauduit-Nogueira} relies on  the notion of return words
 which avoids the  use of  coboundaries.  The series of papers  \cite{Bressaud-Durand-Maass:2005,Bressaud-Durand-Maass:2010,CortDP:16,CDHM:2003,Durand-Frank-Maass:2015,DFM:19,ADE:23} deals  with proper substitutions, here again avoiding coboundaries. See also \cite{DurGoy:19,BMY} for the constant-length case  and  \cite{Cassaigne-Ferenczi-Messaoudi:08}
for the Arnoux-Rauzy shifts. The $S$-adic Pisot case is handled in \cite{BST:20}  where continuous eigenvalues are considered.
Observe that similar results are obtained for flows and ${\mathbb R}$-actions (instead of ${\mathbb Z}$-actions, which is the viewpoint  developed here). See \cite{Solomyak:97,Solomyak:07}
for an extension of Host's result 
for substitution tilings and   see \cite{Fusion:14} for the constant-length case for   multidimensional tilings  of $S$-adic type, with an  approach based on return words  avoiding the use  of  coboundaries. Let us cite also   \cite{BS:20}    for  the study of the modulus of continuity of  spectral measures in the weakly mixing  case  via  a spectral cocyle.
Note  that in  the case  of flows,  towers have the same height, which  is reminiscent of the constant-length case. 
 
In conclusion, the papers we have cited above show situations where it is possible to characterize  topological  and measurable eigenvalues,  and to distinguish them.
 The present work is a preliminary step  towards the extension of the existing work in the proper case mentioned above, in order to get a  more complete characterization of   continuous vs. measurable eigenvalues in the $S$-adic setting,
by highlighting the role played by coboundaries.

Let us sketch the contents of this paper.
Preliminaries are recalled in Section \ref{preliminaries}. In Section \ref{sec:comspec} we introduce 
  the  combinatorial notions  of essential words and of straightness;
we also  discuss eigenvalues and coboundaries for substitutions.  In Section \ref{sec:coboundary-directive} we introduce $S$-adic coboundaries and discuss their connection to the existence of 
 continuous eigenvalues of $S$-adic shifts;  we also  investigate the relation between continuous eigenvalues and measures of letters in the case of a trivial coboundary.
Section \ref{measurable} deals with measurable eigenvalues in the  case of  so-called finite exact rank.
Lastly, in Section \ref{constant-length-S-adic} we focus on the  family of $S$-adic systems generated by constant-length directive sequences and 
in particular, on the Toeplitz $S$-adic shifts.

\subsection*{Acknowledgements}The authors warmly thank Karl Petersen for  fruitful discussions around eigenvalues of a one-sided shift and its natural extension, and in particular for providing  Proposition \ref{two-one-sided-continuous}, as well as  Dominique Perrin for  enlightening discussions around  one-sided recognizability
and Wolfgang Steiner for discussions on   the notion of transition words. Finally the authors thank the referees, who helped us to significantly  improve the previous version of this paper. In particular we thank the second referee for the stronger statement of Corollary \ref{cor:cobham}. We also thank  the Lorentz center 
for   the fruitful discussions that  have taken place
during the workshop ``Dynamics on zero-dimensional spaces: new connections".

\section{Preliminaries}\label{preliminaries}

This section recalls  the  notions that will be needed hereafter. Section \ref{subsec:subs}   first  recalls   basic definitions 
 from  symbolic dynamics, Section \ref{main_S_adic}   is devoted to  $S$-adic shifts, Section \ref{subsec:recog} to recognizability, 
and Section \ref{sec:generating-partitions}   discusses  the  partitions consisting of  Rokhlin towers associated to  recognizable $S$-adic shifts.

\subsection{Shifts, substitutions and eigenvalues}\label{subsec:subs}

 Let $\mathcal{A}$ be a finite set of symbols, also called an {\it alphabet}, and let $\mathcal{A}^\mathbb{Z}$ denote the set of  two-sided infinite sequences over $\mathcal{A}$. 
 Endowing $\mathcal{A}$  with the discrete topology, we equip  $\mathcal{A}^\mathbb{Z}$ with the metrizable product topology.
 In this work we consider shift dynamical systems, or {\em shifts} $(X,T)$, where $X$ is a closed $T$-invariant set of $\mathcal A^\mathbb Z$ and $T:\mathcal A^\mathbb Z\to \mathcal A^\mathbb Z$ is the left shift map  $(x_n)_{n\in\mathbb{Z}} \mapsto (x_{n+1})_{n\in\mathbb{Z}}$.  We call these invertible shifts {\em two-sided}.  We use letters $x,y,z$ to denote points in two-sided shift spaces $\mathcal{A}^\mathbb{Z}$.
  In some cases we will discuss  non-invertible, or one-sided, shifts  $(\tilde{X},T)$, where $\tilde{X}\subset \mathcal{A}^\mathbb{N}$  stands for the set of one-sided,  or right-infinite sequences over $\mathcal A$.   A  shift is {\em minimal} if it  has no non-trivial closed shift-invariant subset.   
    We say that $x \in \mathcal{A}^\mathbb{Z}$ is \emph{periodic} if $T^k(x) = x$ for some $k \ge 1$, \emph{aperiodic} otherwise.
The~shift $(X,T)$ is said to be \emph{aperiodic} if each $x \in X$ is aperiodic.   For basics on
  continuous and measurable dynamics, see  e.g. \cite{cornfeld-fomin-sinai,Petersen:1983,Walters}.

  Given a finite alphabet  $\mathcal{A}$, let  $\mathcal{A}^*$ be the free monoid of all (finite) words over $\mathcal{A}$ under the operation of concatenation, and let $\mathcal{A}^+$  be the set of all non-empty words over $\mathcal{A}$. We let  $|w|$ denote   the length of a finite word~$w$ and  let $|\mathcal A|$ denote the cardinality of  the set $\mathcal A$. A {\em subword}   of  a word or a sequence $x$  is    a finite word $x_{[i,j)}$, $i \le j$, with $x_{[i,j)} := x_i x_{i+1}\dots x_{j-1}$.  (We use here  the term   subword instead of factor, which we use only  for  topological or measurable factors.)  For any word $w\in \cA ^+$ and any $0\leq j<|w|$, the notation $T^jw$ refers to the word $w_{[j,|w|)}$.
  A {\em language} is a collection of words in $\mathcal{A}^*$. 
The \emph{language~$\mathcal{L}_x$} of $x = (x_n)_{n\in \mathbb Z} \in \mathcal{A}^\mathbb{Z}$ is the set of all its subwords. 
The language~$\mathcal{L}_X$ of a one- or two-sided shift $(X,T)$ is the union of the languages of all $x \in X$; it is closed under the taking of subwords and every word $w$ in $\mathcal{L}_X$ is \emph{left- and right-extendable}  to a word in $\mathcal{L}_X$, i.e., there  exist letters $a,b$ such that $awb \in \mathcal{L}_X$.
Conversely,  a language~$\mathcal{L}$ on~$\mathcal{A}$ which is closed under the taking of subwords and such that each word is both left- and right-extendable defines a one- or two-sided shift $(X_\mathcal{L}, T)$, where $X_\mathcal{L}$ consists of   the set of points all of whose subwords belong to~$\mathcal{L}$, so that $\mathcal L_{X_\mathcal{L}} = \mathcal L$.   
We remark that the two-sided shift defined by $\mathcal L$ is the natural extension of the one-sided shift defined by $\mathcal L$; for a definition and details, see \cite{cornfeld-fomin-sinai}.

Let $X$ be a one-sided  (respectively two-sided)  minimal shift. If $\mu$ is a $T$-invariant probability Borel measure on $X$, then
 $T$ induces an isometric (respectively unitary) Koopman operator on $L^2(X,\mu)$,  namely $f\mapsto f\circ T$.  The  {\it eigenvalues} of $(X,T,\mu)$ are by definition the eigenvalues  
of this operator, and the {\it eigenfunctions} of $(X,T,\mu)$ are defined as being its eigenvectors.  The set of eigenvalues is an invariant for  conjugacy  in measure  and is called the {\em discrete spectrum of $(X,T,\mu).$}  If there exists an orthonormal basis of  $L^2(X,\mu)$ consisting of eigenfunctions,  then  $(X,T,\mu)$ is  said to have \emph{discrete spectrum}. One can also consider the {\it continuous} eigenvalues of $(X,T)$: these are values $\lambda $ such that there exists a continuous function $f:X\rightarrow \mathbb C$ with $f\circ T = \lambda f$.  Eigenvalues belong to the set  $ \mathbb S^{1}$ of  complex numbers  with modulus $1$.   If $t$ is such that $e^{2i \pi t}$ is a (continuous)    eigenvalue, then $t$ is said to be an {\em additive (continuous)  eigenvalue}.  Abusing  language, we say that  $\lambda=e^{2i \pi t}$ is rational  whenever 
$t$ is rational, i.e., whenever $\lambda$ is a root of unity. We  denote by  $E(X,T)$  the  additive  group of additive continuous eigenvalues of $(X,T)$.
It
is a topological invariant for $(X,T)$ for conjugacy.   Continuous eigenvalues  are measurable
if  the invariant measure is a Borel  measure.
An example of a family for which $E(X,T)$ equals the set of measurable eigenvalues is  the family of primitive substitution shifts (see  \cite{Host:1986} and  also see below), but in general the set of measurable eigenvalues strictly contains the continuous ones.  See for example \cite{Downarowicz-Lacroix:1996} and the comments at the end of Section \ref{Toeplitz-S-adic}.

 Let $v, w$ be two words. The cylinder $[v.w]$ is defined  as the set
$\{ x \in X \mid x_{[-|v| , |w|)} = vw \}$  and the cylinder   $[w]$ is the set
$\{ x \in X \mid x_{[0 , |w|)} = w \}$.  Cylinders are clopen sets. If $w$ is a letter,  the cylinder is   called a {\em one-letter cylinder},  and the same holds for two letters.
 Let ${\mathcal M} (X,T)$ be the set of all $T$-invariant probability measures on $(X,T)$.  Let  $I$ be  the  additive group  generated by   the measures of  cylinders, i.e., 
$$I(X,T) = \bigcap_{ \mu \in {\mathcal M} (X,T)} \left\{ \int f d\mu \,: \, f \in C(X,{\mathbb Z})\right\}.
$$
   One has   $E(X,T) \subset I(X,T)$ by   \cite[Proposition 11]{CortDP:16} and   \cite[Corollary 3.7]{GHH:18}. 
In particular, if $(X,T)$ is uniquely ergodic with unique $T$-invariant probability  measure $\mu$, then $
 I(X,T) =  \left\{ \int f d\mu \,: \, f \in C(X,{\mathbb Z})\right\} = \left\langle \{\mu([w]): w\in\mathcal{L}(X)\}\right\rangle$. Sufficient conditions 
for  the  additive group generated by  measures of one-letter cylinders being included in $E$      under the assumption of unique ergodicity 
are given in Section \ref{sec:Pisot}.

 The following result relating eigenvalues for one-sided and two-sided shifts is  graciously provided by Karl Petersen.
 Note that a  measure $\tilde{\mu}$ on a one-sided shift space defines  $\mu$ on the natural extension. Both measures give the same mass to cylinders, the only difference is that they are defined on different spaces; see \cite[Chapter 10, Section 4]{cornfeld-fomin-sinai} 
 for details.

\begin{proposition}\label{two-one-sided-continuous}
Let $(\tilde{X},T)$ be a one-sided minimal  shift, and  let 
$(X,T)$ be its natural extension. Then
\begin{itemize}\item
  $E(\widetilde X,T)=E(X,T)$.
 \item Let  $\tilde{\mu}$ be a shift invariant measure on $(\tilde{X},T)$ and  $\mu$ the corresponding measure on $(X,T)$. Then   $\lambda$ is a measurable eigenvalue  for   $(\tilde{X},T, \tilde{\mu})$ if and only if it is also one for  $(X,T, \mu)$.\end{itemize}
\end{proposition}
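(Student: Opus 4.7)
The plan is to exploit the canonical factor map $\pi \colon X \to \tilde X$ which sends a two-sided sequence in the natural extension to its non-negative coordinates. This $\pi$ is a continuous, surjective, $T$-equivariant map with $\pi_* \mu = \tilde\mu$, so $(\tilde X, T, \tilde\mu)$ is both a topological and a measure-theoretic factor of $(X, T, \mu)$. Pulling back eigenfunctions via $\pi$ gives immediately $E(\tilde X, T) \subseteq E(X, T)$ and the analogous inclusion in the measurable setting, so only the two reverse containments require real work.

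For the continuous converse, I would argue that any $F \in C(X)$ satisfying $F \circ T = \lambda F$ is constant on the fibres of $\pi$. If $\pi(x) = \pi(y)$, then $x$ and $y$ coincide on all non-negative coordinates, so $T^n x$ and $T^n y$ coincide on coordinates $\geq -n$ and $d(T^n x, T^n y) \to 0$. Since $|\lambda| = 1$, we have $|F(x) - F(y)| = |F(T^n x) - F(T^n y)|$ for every $n$, and continuity of $F$ forces $F(x) = F(y)$. Because $\pi$ is a continuous surjection between compact Hausdorff spaces it is a quotient map, so the induced function $\tilde F \colon \tilde X \to \mathbb C$ is continuous, and equivariance of $\pi$ yields $\tilde F \circ T = \lambda \tilde F$.

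For the measurable converse, let $\mathcal F := \pi^{-1}(\mathcal B_{\tilde X})$ be the ``future'' $\sigma$-algebra on $X$ and set $G := \E[F \mid \mathcal F]$; by construction $G = \tilde G \circ \pi$ for some measurable $\tilde G$ on $\tilde X$. The filtration $\mathcal F_n := T^n \mathcal F$ is increasing in $n$ and $\bigvee_n \mathcal F_n = \mathcal B_X$ by the defining property of the natural extension. Using the identity $\E[F \mid T^n \mathcal F] = \E[F \circ T^n \mid \mathcal F] \circ T^{-n}$, valid for invertible measure-preserving $T$, together with $F \circ T^n = \lambda^n F$, I get
\[
\E[F \mid \mathcal F_n] \;=\; \lambda^{n}\, G \circ T^{-n},
\]
so $\|\E[F \mid \mathcal F_n]\|_2 = \|G\|_2$ for every $n$. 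Martingale convergence gives $\E[F \mid \mathcal F_n] \to F$ in $L^2$, hence $\|F\|_2 = \|G\|_2$. Since conditional expectation is an $L^2$-contraction with equality only on $\mathcal F$-measurable functions, this forces $F = G$ almost everywhere; thus $F$ descends to a measurable eigenfunction $\tilde F$ on $(\tilde X, T, \tilde\mu)$ with the same eigenvalue.

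The main point to handle carefully is the generating property $\bigvee_n T^n \mathcal F = \mathcal B_X$, which is precisely the characterising feature of the natural extension and is the mechanism transferring spectral information from $X$ back down to $\tilde X$. The topological half is then essentially uniform continuity, and the measurable half is essentially the rigidity of $L^2$ conditional expectation, so no further subtleties intervene.
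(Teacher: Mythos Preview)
Your proposal is correct, and both halves take a genuinely different route from the paper's proof.

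For the continuous direction, the paper does not descend $F$ through the fibres of $\pi$. Instead it fixes one point $\tilde x\in\tilde X$, chooses \emph{one} preimage $x\in X$, defines $\tilde f(T^n\tilde x):=\lambda^n f(x)$ on the orbit of $\tilde x$, and then shows by a direct $\varepsilon$--$\delta$ argument (using uniform continuity of $f$ and $|\lambda|=1$) that this $\tilde f$ is uniformly continuous on the orbit, hence extends to $\tilde X$. Your fibre-constancy argument is cleaner: it shows that the two-sided eigenfunction $F$ itself already factors through $\pi$, so no choice of preimage and no extension step are needed. The underlying estimate is the same (shifting forward kills the past, and $|\lambda|=1$ lets you transport the estimate back), but packaging it as ``constant on fibres, hence descends through the quotient map'' is more conceptual.

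For the measurable direction the difference is larger. The paper gives no direct argument at all: it invokes the Sz.-Nagy--Foia\c{s} theory, observing that the Koopman unitary on $L^2(X,\mu)$ is the minimal unitary dilation of the Koopman isometry on $L^2(\tilde X,\tilde\mu)$, and cites \cite{Sz-Nagy-Foias-1958} and \cite{Campbell-1986} for the fact that the point spectrum is preserved. Your martingale argument is entirely self-contained and elementary: the identity $\E[F\mid T^n\mathcal F]=\lambda^n\,G\circ T^{-n}$ together with increasing-martingale convergence forces $\|G\|_2=\|F\|_2$, whence $F=G$ is already $\mathcal F$-measurable. This avoids the dilation machinery completely. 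The one point you flag---that $\bigvee_n T^n\mathcal F$ is the full $\sigma$-algebra---is indeed exactly the defining property of the natural extension, so there is no hidden gap.
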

\begin{proof}

One easily checks that $E(\widetilde X,T)\subset E(X,T)$. Conversely, suppose that $\lambda$ is a continuous eigenvalue of $(X,T)$ and let $f\in C(X)$ be a corresponding eigenfunction with $||f||_{\infty}=1$.  Given $\tilde{x}\in \tilde{X}$, let $x\in X$ agree with $\tilde{x}$ on the non-negative indices. Choose such an $x$ and fix it. Define $\tilde {f}:\{T^n(\tilde{x}): n\in \N\}\rightarrow \mathbb C$ by $\tilde {f}(T^n(\tilde{x})) = \lambda^n(f(x))$. If we can show that $\tilde f$ is uniformly continuous on $\{T^n(\tilde{x}): n\in \N\}$, then we are done as  we can then extend $\tilde f$ to a continuous eigenfunction on $\tilde{X}$.  (Note that we only need the existence of a  point with a dense orbit.)

As $f$ is continuous on a compact space, so it is uniformly continuous. Thus for each $\varepsilon>0$ there is a $k$ such that for all $y,z\in X$, if $y_{[-k,k] }=z_{[-k,k] }$, then $|f(y)-f(z)|<\varepsilon$.
Now if $T^n(\tilde{x})_{[0,2k+1]}= T^m(\tilde{x})_{[0,2k+1]} $, then by definition  $T^{n+k}(x)_{[-k,k]}= T^{m+k}(x)_{[-k,k]}$, so that  one has $|f(T^{n+k}(x))-f(T^{m+k}(x))| < \varepsilon$. Therefore
\begin{align*}  |\tilde{f}(T^n(\tilde{x}))-\tilde{f}(T^{m}(\tilde{x}))|= |\lambda^k\tilde{f}(T^n(\tilde{x}))-\lambda^k\tilde{f}(T^{m}(\tilde{x}))|&=  |\lambda^{k+n}f(x)-\lambda^{k+m}f(x)|\\&=  |f(T^{n+k}(x))-f(T^{m+k}(x))| < \varepsilon,
\end{align*}
 and  $\tilde f$ is uniformly continuous on the orbit of $\tilde x$.

As $\tilde{\mu}$ and $\mu$ give the same mass to cylinders, 
the second statement then follows from \cite[Theorem 1]{Sz-Nagy-Foias-1958}, combined with the fact that  the unitary operator $f\mapsto f\circ T$ on $L^2(X,T,\mu)$ is the minimal unitary dilation of the isometric  operator on $L^2(\tilde{X},T,\mu)$ \cite[page 385]{Campbell-1986}.

\end{proof}
  
      A dynamical system $(Z,F)$ is called {\em equicontinuous} if the family $\{F^n : n\in\Z\}$ is equicontinuous. 
An equicontinuous system $(Z,F)$ which is minimal must be a  minimal rotation, that is, there is a continuous abelian group structure on $Z$ and an element $g\in  Z$ such that the homeomorphism $F$ is given by adding $g$, i.e.,  $F(z) = z+g$. Moreover, the orbit $\{ng: n\in\Z\}$ is dense in $Z$, i.e., $g$ is a topological generator of $Z$. Recall that $\pi: (X,T)\rightarrow (Z,F) $ is a {\em factor map} if it is continuous, onto, and $\pi\circ T  = F\circ \pi$.
If $(Z,+g)$ is equicontinuous and there is a  factor map $\pi :(X,T)\rightarrow (Z,+g)$, we say that $(Z,+g)$ is an equicontinuous factor of $(X,T)$. Any $\Z$-action $(X,T)$ admits a maximal equicontinuous factor $\pi:(X,T)\to (Z,+g)$.  
This equicontinuous factor must be maximal in the sense that any equicontinuous factor of $(X,T)$ factors through it, and it encodes all continuous eigenvalues of $(X,T)$.

Let $\sigma:\, \mathcal{A}^*\to \mathcal{A}^+$ be a morphism, also  called a \emph{substitution}. Note that  the image of any letter is a non-empty word. We will abuse notation and write  $\sigma:\, \mathcal{A} \to \mathcal{A}^+$.  Using concatenation, we extend $\sigma$ to~$\mathcal{A}^\mathbb{N}$ and~$\mathcal{A}^\mathbb{Z}$.  The finiteness of $\mathcal A$ guarantees that  $\sigma$-periodic points, i.e., points $x$ such that $\sigma^k(x)=x$ for some  positive $k$, exist.
The \emph{incidence matrix} of the substitution $\sigma$ is the $|\mathcal{A}| \times |\mathcal{A}|$ matrix~$M_\sigma=(m_{ij})$ with $m_{ij}$ being the number of occurrences of~$i$ in $\sigma(j)$.    A substitution is  \emph{primitive} if its incidence matrix admits  a  power  with positive entries. 
A substitution $\sigma$ is {\em left- (right-) proper} if for each letter $a$,  $\sigma(a)$ starts (ends) with the same letter, and it is {\em proper} if it is both left- and right-proper.
Given a substitution $\sigma:\mathcal{A} \rightarrow \mathcal{A}^+$, the  language   $\mathcal{L}_{\sigma}$    defined by  $\sigma$ is
\[
\mathcal{L}_{\sigma} = \big\{w \in \mathcal{A}^*:\, \mbox{$w$ is a subword of $\sigma^n(a)$ for some  $a\in\mathcal{A}$ and $n\in \mathbb N$}\big\}.
\]
If $\sigma$ is primitive, then each word in $\mathcal{L}_{\sigma}$ is left- and right-extendable, and $\mathcal{L}_{\sigma}$ is closed under the taking of subwords, so we  define
$X_{\sigma}:=X_{\mathcal L_\sigma}$.
We call $(X_\sigma, T)$ a \emph{substitution shift} and the language  of a  substitution shift is called a substitution language. The substitution $\sigma$ is \emph{aperiodic} if $(X_\sigma,T)$ is aperiodic.

\begin{definition}[Transition and return words]\label{def:return-word}

Let $\mathcal L$ be a language on $\mathcal A$, and let $a$, $b \in {\mathcal A}$.   
Suppose that the word $w\in \mathcal L$  starts with $a$ and is such that $wb$ belongs to $\mathcal L$. Then  $w$ 
  is  called a {\em  transition word} from $a$ to $b$.
 Furthermore
if $wb$ contains exactly one occurrence of each of $a$ and $b$,  then we say that $w$ is a {\em strict transition word} from $a$ to $b$. 
If $a=b$,  $w$ is also called  a  return word or a  strict return word to $a$.

\end{definition}

  For example,
  if $accd  b \in \mathcal L$,   then  $accd$ is a  strict transition word from $a$ to $ b$. Also, if $aa   \in \mathcal L$,   then  $a$ is a strict  return word  to $ a$.

 A subshift  $(X,T)$ is  {\em linearly recurrent} if there exists a constant $L$ such that for any word $w$ belonging to $\mathcal L_X$,  the length of any  strict
 return word to $ w$  is of length at most $L| w|$. Examples of linearly recurrent shifts are primitive substitution shifts \cite{Durand:00a}.

\subsection{$S$-adic shifts } \label{main_S_adic}
We  recall basic definitions concerning $S$-adic shifts. They 
are obtained by replacing the iteration of a single substitution by the iteration of a sequence of {\em morphisms}, which are defined like substitutions, except that letters in $\mathcal A$ are mapped to  non-empty words on a possibly different alphabet  $\mathcal B$.  In this article we restrict to the case of an $S$-adic shift defined over one alphabet $\mathcal A$, however in discussing other works, we refer to morphisms if that is their context.

Let $\boldsymbol{\sigma} = (\sigma_n)_{n\ge0}$ be a sequence of substitutions with $\sigma_n:\, \mathcal{A}\to \mathcal{A}^+$; we call   
 $\boldsymbol{\sigma}$ a {\em directive sequence}.
 For $0\leq n<N$, let 
\[
\sigma_{[n,N)} = \sigma_n \circ \sigma_{n+1} \circ \dots \circ \sigma_{N-1}.
\]
For $n\geq 0$, define  
\[
\tilde{\mathcal{L}}_{\boldsymbol{\sigma}}^{(n)} = \big\{w \in \mathcal{A}^*:\, \mbox{$w$ is a subword of $\sigma_{[n,N)}(a)$ for some $a \in\mathcal{A}$, for some $N>n$}\big\}.
\]

 Define
\[ X_{\boldsymbol{\sigma}}^{(n)}:=\{x\in \mathcal A^{\Z}: \mbox{ for each $k\leq \ell, x_{[k,\ell)}\in \tilde{\mathcal{L}}_{\boldsymbol{\sigma}}^{(n)}$} \}\]
and note that $  \mathcal{L}_{\boldsymbol{\sigma}}^{(n)}      := \{ w:  w \mbox{ appears as some subword in some } x\in X_{\boldsymbol{\sigma}}^{(n)}    \}$
 is a subset of   $\tilde{\mathcal{L}}_{\boldsymbol{\sigma}}^{(n)}$, but it can be a proper subset.
 We call  $(\mathcal{L}_{\boldsymbol{\sigma}}^{(n)})_{n\geq 0}$ the  sequence of languages associated to~$\boldsymbol{\sigma}$.

We say that ~$\boldsymbol{\sigma}$ is \emph{primitive} if for each $n\ge0$ there is an $N>n$ such that  the incidence matrix $M_{[n,N)}:= M_{\sigma_n}M_{\sigma_{n+1}}\dots M_{\sigma_{N-1}}$ of $\sigma_{[n,N)}$ is a positive matrix. Under the assumption of primitivity for $\boldsymbol{\sigma}$,
each word in $ \mathcal{L}_{\boldsymbol{\sigma}}^{(n)}$ is left- and right-extendable. Note that if  $\boldsymbol{\sigma}$ is primitive, then each letter appears in each language $\mathcal L^{(n)}_{\boldsymbol\sigma}$. If $\boldsymbol{\sigma}$ is primitive, then $(X_{\boldsymbol{\sigma}}^{(n)},T)$ is minimal  for all~$n$, 
by \cite[Lemma 7]{Durand:00a}, and     $\tilde{\mathcal{L}}_{\boldsymbol{\sigma}}^{(n)}= \mathcal{L}_{\boldsymbol{\sigma}}^{(n)}$ for each $n$.  As we only work with minimal shifts in this article, even if we do not always need to  work with primitive directive sequences, {\em we assume throughout that     $\boldsymbol{\sigma}$ is primitive,} and hence that         each letter in $\mathcal A$ appears in each language $\mathcal L^{(n)}_{\boldsymbol\sigma}$. Nevertheless, some of our results hold with the weaker assumption of minimality of   $(X_{\boldsymbol{\sigma}},T)$.

To abbreviate notation, we set $X_{\boldsymbol{\sigma}} = X_{\boldsymbol{\sigma}}^{(0)}$ and call $(X_{\boldsymbol{\sigma}},T)$ the \emph{$S$-adic shift} generated by the directive sequence ~$\boldsymbol{\sigma}$.
The directive sequence 
${\boldsymbol{\sigma}}$ is  {\em everywhere growing} if  for each $a \in \mathcal{A}$,  
 $|\sigma_{[0,n)}(a)|\rightarrow \infty$   as $n$ tends to infinity. If a directive sequence is primitive, then it is everywhere growing.
We also  say  that  ${\boldsymbol \sigma}$ is {\em strongly primitive} if there exists $r$ such that, for each $n$, 
$\sigma_{[n,n+r)}$ has a positive incidence matrix. 

  We consider  particular families of directive sequences.
\begin{definition}\label{def:directive-families} Let $\boldsymbol{\sigma}= (\sigma_n)_{n\geq 0}$ be a directive sequence. We say that    $\boldsymbol{\sigma}$ is 
\begin{itemize}
\item
  {\em finitary}   if there is  a  finite set $\mathcal S$ such that $\sigma_n\in \mathcal S$ for each $n$;
 \item
  {\em stationary }  if  there  exists a substitution $\sigma$ such that 
  $\sigma_n= \sigma$ for all $n$;
  \item
 {\em constant-length, with length sequence $(q_n)_{n\geq 0}$} if for each $n$, $\sigma_n$ has constant-length $q_n$
(note that the sequence $(q_n)$ is not necessarily constant);
\item
 {\em unimodular} if for each $n$,  the incidence matrix $M_n$  of   $\sigma_n$ is unimodular, i.e., $|M_n|=\pm 1$.
\end{itemize}
\end{definition}

\subsection{Recognizability}\label{subsec:recog}
We  first start with   the notion   of recognizability which expresses the idea of performing a  ``desubstitution''.
\begin{definition}[Dynamical recognizability, $\sigma$-representations  and recognizable directive sequences] \label{def:recog}
Let $\sigma:\, \mathcal{A} \to \mathcal{A}^+$ be a substitution and $y \in \mathcal{A}^\mathbb{Z}$.
If $y = T^k \sigma(x)$ with $x=(x_n)_{n\in \mathbb Z}
\in \mathcal{A}^\mathbb{Z}$, and  $0 \leq k < |\sigma(x_0)|$,  then we say that $(k,x)$ is a \emph{(centred) $\sigma$-representation} of~$y$. 
For $X \subseteq \mathcal{A}^\mathbb{Z}$, we say that the $\sigma$-representation $(k,x)$ \emph{is in~$X$} if $x \in X$.

Given a nonempty $X \subseteq \mathcal{A}^\mathbb{Z}$ and $\sigma:\, \mathcal{A} \to \mathcal{A}^+$, we say that $\sigma$ is \emph{recognizable in~$X$} if each $y \in \mathcal{A}^\mathbb{Z}$ has at most one centered $\sigma$-representation in~$X$. 
A~directive sequence $\boldsymbol{\sigma}$ is \emph{recognizable at level~$n$} if $\sigma_n$ is recognizable in~$X_{\boldsymbol{\sigma}}^{(n+1)}$. 
The sequence~$\boldsymbol{\sigma}$, or  the $S$-adic shift $X_{\boldsymbol{\sigma}}$ if the sequence $\boldsymbol{\sigma}$ is given, is \emph{recognizable} if each  $x \in X_{T^n( \boldsymbol{\sigma} )}$ has exactly one $\sigma_n$-centred representation in $X_{T^{n+1}(\boldsymbol{\sigma})}$.

\end{definition}

In this article, recognizability is a standing  assumption that is necessary for most of our results. Here, we are talking about two-sided dynamical recognizability.
Note that two-sided  recognizability does not  in general imply one-sided recognizability.  
 Also,   we are concerned with dynamical recognizability, and not Moss\'{e}'s combinatorial version  \cite{Mosse:92, Mosse:96} (based on cutting points for finite words), and while dynamical recognizability implies combinatorial recognizability \cite[Theorem 2.5]{BSTY}, the converse is false. 
 A two-sided aperiodic primitive substitution shift is recognizable in both senses, but not necessarily  one-sided recognizable \cite{Mosse:92, Mosse:96}.
 There even exist substitutions that are constant-length and injective  on letters, that are combinatorially one-sided recognizable, but not dynamically one-sided recognizable as in the following example provided to us by Dominique Perrin (see also \cite{BBPR}).

\begin{example}
Consider 
\begin{align*}
   a &  \stackrel{\sigma}{\mapsto}  ba\\
    b &  \stackrel{\sigma}{\mapsto}  aa. 
 \end{align*}
Since $\sigma$ is a primitive, aperiodic and constant-length substitution which is injective on the letters, it is one-sided recognizable in the  combinatorial sense of Moss\'e  \cite{Mosse:92}. However, the sequence $x=a\sigma(a)\sigma^2(a)\sigma^3(a)\dots \in \tilde{X}_\sigma$ has two different centered $\sigma$-representations in $\tilde{X}_{\sigma}$, namely $
x= T(\sigma(ax))=T(\sigma(bx))$, thus it is not dynamically one-sided recognizable. 
\end{example}

In what follows,
when we write that $\boldsymbol{\sigma}$ is recognizable, we mean that the corresponding two-sided shift is (dynamically) recognizable.

 A morphism $\sigma:\mathcal{A}\to \mathcal{C}^{+}$ is {\em elementary} if it cannot be written as $\sigma= \gamma \circ \delta$, with $\delta: \mathcal A \rightarrow \mathcal B^{+}$ and $|\mathcal B|<|\mathcal A|$. 
 A morphism $\sigma:\mathcal A\rightarrow \mathcal B^{+} $ is \emph{left- (right-) permutative} if the first (last) letters of $\sigma(a)$ and $\sigma(b)$ are different for all distinct $a,b \in \mathcal{A}$.   Two morphisms $\sigma, \tilde{\sigma}:\, \mathcal{A} \to \mathcal{B}^+$ are \emph{rotationally conjugate} if there is a word $w \in \mathcal{B}^*$ such that $\sigma(a) w = w \tilde{\sigma}(a)$ for all $a \in \mathcal{A}$ or $w \sigma(a) = \tilde{\sigma}(a) w$ for all $a \in \mathcal{A}$.
We state here  conditions on morphisms in a directive sequence $(\sigma_n)_{n\ge0}$  of morphisms, which guarantee recognizability.  This is one of the results on recognizability from \cite[Theorem 3.1]{BSTY},  with the more general result concerning elementary morphisms obtained earlier by 
Karhum\"{a}ki, Man\v{u}ch and Plandowski  in \cite{Karhumaki-Manuch-Plandowski:2003}, as discussed in \cite{Beal-Perrin-Restivo}. Note that there is a decision procedure to conclude whether a constant-length substitution generates an aperiodic fixed point  \cite{Allouche-Rampersad-Shallit-2009}.
Since we work with morphisms on a fixed alphabet, the case that concerns us is  $\mathcal{A}_{n} = \mathcal A$ for each $n$. 
\begin{theorem}\label{c:rec}
Let $\boldsymbol{\sigma} = (\sigma_n)_{n\ge0}$ be a primitive sequence of morphisms with $\sigma_n:\, \mathcal{A}_{n+1}\to \mathcal{A}_n^+$. Suppose that $X_{\boldsymbol{\sigma} }$ is aperiodic.
If each morphism $\sigma_n$ is elementary, then  $\boldsymbol{\sigma}$ is  recognizable. 
In particular, if 
 each morphism $\sigma_n$
satisfies one of 
\begin{itemize}
\item  $\mathrm{rk}(M_{\sigma_{n}}) = |\mathcal{A}_{n+1}|$, or
\item $|\mathcal{A}_{n+1}| = 2$, or
\item  $\sigma_n$ is (rotationally conjugate to) a left- or right-permutative morphism,
\end{itemize}
then $\boldsymbol{\sigma}$ is  recognizable. 
\end{theorem}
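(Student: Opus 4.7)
The plan is to reduce the statement to a single level, then invoke a deep word-combinatorics result for the main assertion. By Definition \ref{def:recog}, $\boldsymbol{\sigma}$ is recognizable if and only if each $\sigma_n$ is recognizable in $X^{(n+1)}_{\boldsymbol{\sigma}}$; fixing $n$, I would set $\sigma = \sigma_n$, $\mathcal{A} = \mathcal{A}_{n+1}$, $\mathcal{B} = \mathcal{A}_n$, and $X = X^{(n+1)}_{\boldsymbol{\sigma}}$, and show that no aperiodic $y \in \mathcal{B}^{\mathbb{Z}}$ admits two distinct centred $\sigma$-representations $(k,x)$ and $(k',x')$ with $x,x' \in X$. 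To establish this under the assumption that $\sigma$ is elementary, I would invoke the Karhum\"{a}ki--Manuch--Plandowski theorem from \cite{Karhumaki-Manuch-Plandowski:2003}: superimposing the two $\sigma$-parsings on $y$ produces a nontrivial system of word equations in the images $\{\sigma(a) : a \in \mathcal{A}\}$, and aperiodicity of $x$ and $x'$ rules out degenerate periodic solutions; the KMP analysis then extracts a nontrivial factorization $\sigma = \gamma \circ \delta$ with $\delta$ landing in a strictly smaller alphabet, contradicting elementarity. This is essentially the pattern carried out in \cite{BSTY}.

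The ``in particular'' clause then reduces to verifying that each of the three listed conditions either implies elementarity or trivialises the conclusion. For the rank condition, any factorization $\sigma_n = \gamma \circ \delta$ with $\delta : \mathcal{A}_{n+1} \to \mathcal{C}^+$ gives $M_{\sigma_n} = M_\gamma M_\delta$ with $M_\delta$ of shape $|\mathcal{C}| \times |\mathcal{A}_{n+1}|$, so $\mathrm{rk}(M_{\sigma_n}) \leq |\mathcal{C}|$; the hypothesis $\mathrm{rk}(M_{\sigma_n}) = |\mathcal{A}_{n+1}|$ therefore forces $|\mathcal{C}| \geq |\mathcal{A}_{n+1}|$, proving elementarity. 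For the two-letter case, a non-elementary $\sigma_n$ on $\{a,b\}$ must factor through a one-letter alphabet, so $\sigma_n(a) = w^p$ and $\sigma_n(b) = w^q$ for a common word $w \in \mathcal{A}_n^+$; every concatenation of the two images is then $|w|$-periodic, the minimal shift $X^{(n)}_{\boldsymbol{\sigma}}$ is therefore periodic, and the statement for aperiodic points holds vacuously. For the (rotationally conjugate to) left- or right-permutative case I would first apply a rotation to reduce to the literal permutative case, then observe that in any factorization $\sigma_n = \gamma \circ \delta$ the first letter of $\sigma_n(a) = \gamma(\delta(a))$ depends only on the first letter of $\delta(a)$; left-permutativity forces the assignment $a \mapsto \delta(a)[0]$ to be injective, so $|\mathcal{C}| \geq |\mathcal{A}_{n+1}|$ and elementarity follows; the right-permutative case is symmetric.

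The principal obstacle is the Karhum\"{a}ki--Manuch--Plandowski theorem itself, which is a substantial combinatorial result on word equations that I would not attempt to reprove. I would simply cite the original statement and rely on the extension to bi-infinite aperiodic sequences worked out in \cite{BSTY} and surveyed in \cite{Beal-Perrin-Restivo}; modulo this input, the level-wise reduction and the three elementarity checks above are essentially routine.
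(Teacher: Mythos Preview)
Your proposal is correct and matches the paper's treatment: the paper does not prove this theorem in-text but cites it from \cite{BSTY} (for the $S$-adic recognizability framework) and \cite{Karhumaki-Manuch-Plandowski:2003} (for the underlying defect/elementarity result), exactly as you do. Your additional verification that the three bulleted hypotheses each force elementarity (or render the aperiodic case vacuous) is accurate and goes slightly beyond what the paper spells out.
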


\begin{example}\label{ex:running-first}
We will use this running example to clarify  definitions and results.
Consider  the substitutions  $S=\{\sigma, \tau\}$ with
 \begin{align*}
   a   \stackrel{\sigma}{\mapsto}  acb  \ \  & a    \stackrel{\tau}{\mapsto}  abc
   \\  b  \stackrel{\sigma}{\mapsto}  bab \ \  & b  \stackrel{\tau}{\mapsto}  acb\\
    \ c  \stackrel{\sigma}{\mapsto}  cbc
 \ \ & c   \stackrel{\tau}{\mapsto} aac.
\end{align*}

We will work  throughout this  example with the directive sequence $ {\boldsymbol{\sigma}}$,  where
\[  {\boldsymbol{\sigma}}=\sigma, \tau, \sigma, \sigma, \tau, \sigma, \sigma , \sigma, \tau , \sigma, \sigma ,\dots \,. \]
This directive sequence is   introduced  by Durand in ~\cite{Durand:00b} 
 to produce   a  finitary  {\em strongly  primitive}  constant-length directive sequence  whose   associated shift $(X_ {\boldsymbol{\sigma}},T)$  is minimal,    {\em  not linearly recurrent}, and hence aperiodic,  and  has linear subword complexity\footnote{  The  {\em subword complexity} of  a subshift $(X,T)$ is the  function $ n \mapsto p_X(n)$  that counts the number of  words of length $n$ that
 belong to its language.}.  This directive sequence  is  also  recognizable, applying Theorem \ref{c:rec}  to the aperiodic shift $(X_ {\boldsymbol{\sigma}},T)$. To see this, note that $\sigma$ is left-permutative, and $\tau$ is rotationally conjugate to a left-permutative substitution: its middle column contains all letters, and its first column consists of one letter.

 \end{example}

The relevance of recognizability is that it gives us a framework within which to approximate our dynamical system, in terms of generating partitions, see Section \ref{sec:generating-partitions}. We end this section  with a statement of  \cite[Theorems 6.5 \& 6.7] {BSTY}, stated below as Theorem \ref{thm:Bratteli-Vershik}, which we will use when comparing our results to those in the literature, specifically those concerning Bratteli-Vershik systems and also finite-rank (cutting-and-stacking) systems. We assume that the reader is familiar with those systems, and refer them to  \cite[Section 6]{BSTY}, or \cite{BKMS:13}  for definitions and terminology concerning Bratteli-Vershik systems and  \cite{Ferenczi:1997} concerning finite rank systems,  recalling only a few definitions,  used mainly for Theorem \ref{thm:Bratteli-Vershik}.

A~\emph{Bratteli diagram} is an infinite graph $B=(V,E)$ such that the vertex set $V=\bigcup_{n\geq 0}V_n$ and the edge set $E = \bigcup_{n\geq 0} E_n$ are partitioned into pairwise disjoint, non-empty subsets $V_n$ and~$E_n$, where
\renewcommand{\theenumi}{\roman{enumi}}
\begin{enumerate}
\item
$V_0=\{v_0\}$ is a single point;
\item
$V_n$ and $E_n$ are finite sets;
\item
there exists a range map~$r:E\rightarrow V$ and a source map~$s:E\rightarrow V$ such that $r(E_n)= V_{n+1}$ and  $s(E_n)= V_n$ for each $n\geq 0$.
\end{enumerate}

A~finite or infinite sequence of edges~$(e_n)$ with $e_n\in E_n$ such that $r(e_n)=s(e_{n+1})$ is called a \emph{finite} or \emph{infinite path}, respectively. An {\it ordered Bratteli diagram} is a Bratteli diagram together with a linear ordering on $r^{-1}(v)$ for each $v\in V\setminus V_0$.
Given a directive sequence~$\boldsymbol{\sigma}$, we can define an associated \emph{natural}  ordered Bratteli diagram, as follows. 
For $n\geq 1$, $V_n$~is a copy of~$\mathcal{A}_{n-1}$. Since we work with a uniform alphabet, this means  that $V_n$ is fixed for each $n$.
Given a vertex $v \in V_{n+1}$ labelled by the letter $a \in \mathcal{A}_{n}$, we order the edges with range~$a$ as follows: if $b$ is the $j$-th letter in $\sigma_{n-1}(a)$, then we label an edge with source~$b$ and range~$a$ with $j{-}1$.  Such an ordered Bratteli diagram  allows the definition of a measurable
Bratteli-Vershik dynamical system (see  \cite[Section 6]{BSTY}). Note that the ordered Bratteli diagram associated to $\boldsymbol{\sigma}$ has much in common with prefix-suffix automata, with labels of edges in the $n$-level of the Bratteli diagram being replaced by prefixes of the words $\sigma_n(a)$; see for example \cite{Canterini-Siegel:2001}.

We say that a transformation $\Phi:\, (X,T) \to (Y,S)$ is an \emph{almost-conjugacy} (also called essential-conjugacy) if there is a $T$-invariant set $\mathcal{D} \subset X$ and an $S$-invariant set $\mathcal{E} \subset Y$, with $\Phi:\, X \setminus \mathcal{D} \to Y \setminus \mathcal{E}$ a continuous bijection satisfying $\Phi \circ T= S \circ \Phi$, and such that~$\mathcal{D}$  (resp.~$\mathcal{E}$)     has zero measure for \emph{every} fully supported invariant probability measure on $(X,T)$ (resp.~$(Y,S)$).
Thus, if $(X,T)$ and $(Y,S)$ are almost conjugate, $\nu$~is any fully supported probability measure on~$X$ that is preserved by~$T$,  $\mu$~is any  fully supported probability measure on~$Y$ that is preserved by~$S$,  and $\Phi$ maps $\nu$ to $\mu$, then $(X,T,\nu)$ and $(Y,S,\mu)$ are conjugate in measure. We can therefore apply \cite[Lemma 6.4]{BSTY} to obtain the following, which is \cite[Theorem 6.5, Corollary 6.7]{BSTY}. We refer to  \cite{Katok-Stepin-1967, Ferenczi:1997} for details on cutting-and-stacking measurable transformations.

\begin{theorem}\label{thm:Bratteli-Vershik}
Let $\boldsymbol{\sigma}$ be an everywhere growing recognizable  directive sequence defined on $\mathcal A$ and $(X_B, \varphi_\omega)$ the natural Bratteli-Vershik dynamical system associated to~$\boldsymbol{\sigma}$.  We assume that $(X_{\boldsymbol{\sigma}},T)$ aperiodic. Then\begin{itemize}
\item
 the system $(X_{\boldsymbol{\sigma}},T)$ is  almost-conjugate to $(X_B, \varphi_\omega)$, and
 \item
if
$|\mathcal A|=k$, and
$\nu$ is any 
fully supported
 probability measure such that $(X_{\boldsymbol{\sigma}}, T,\nu)$ is measure preserving,
then $(X_{\boldsymbol{\sigma}}, T,\nu)$ is measurably conjugate to a cutting-and-stacking measurable transformation of a Lebesgue space which is of measurable rank at most~$k$.
\end{itemize}
\end{theorem}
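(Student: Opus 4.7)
The plan is to build the almost-conjugacy $\Phi\colon X_B\to X_{\boldsymbol{\sigma}}$ directly from the natural Bratteli diagram and then invert it using recognizability. Given an infinite path $\omega=(e_n)_{n\geq 1}$, each finite initial segment $(e_1,\dots,e_n)$ terminates at a vertex $v\in V_{n+1}$ labelled by some letter $a_n\in\mathcal A$ and, by unfolding the prescribed ordering of the edges along the path, picks out a distinguished position $k_n\in\{0,1,\dots,|\sigma_{[0,n)}(a_n)|-1\}$ inside the word $\sigma_{[0,n)}(a_n)$. I would set $\Phi(\omega)$ to be the bi-infinite sequence obtained by writing $\sigma_{[0,n)}(a_n)$ on the window $[-k_n,\,|\sigma_{[0,n)}(a_n)|-k_n)$ and taking the direct limit as $n\to\infty$. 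The everywhere-growing hypothesis ensures both ends of the window tend to $\pm\infty$, so the limit is a well-defined element of $\mathcal A^{\mathbb Z}$, and it lies in $X_{\boldsymbol{\sigma}}$ because every finite window is a subword of some $\sigma_{[0,n)}(a_n)$. Continuity of $\Phi$ and the intertwining $\Phi\circ\varphi_\omega=T\circ\Phi$ are then straightforward from the local compatibility of the labelling with the Vershik successor map.

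The second step is to invert $\Phi$ using recognizability. Given $y\in X_{\boldsymbol{\sigma}}$, recognizability of $\sigma_0$ in $X_{\boldsymbol{\sigma}}^{(1)}$ gives a unique centred representation $(k_0,x^{(1)})$ with $y=T^{k_0}\sigma_0(x^{(1)})$, and iterating at every subsequent level produces a compatible sequence $(k_n,x^{(n+1)})_{n\geq 0}$ which, when read back through the Bratteli diagram, determines an infinite path $\omega(y)$ with $\Phi(\omega(y))=y$. The reconstruction is unique \emph{except} when the central coordinate hits the extreme left or right end of $\sigma_{[0,n)}(x^{(n)}_0)$ for infinitely many $n$; on the $X_B$-side this corresponds precisely to the cofinal equivalence classes of the Vershik maximal and minimal paths, a countable $\varphi_\omega$-invariant set $\mathcal E$ whose image $\mathcal D=\Phi(\mathcal E)$ is countable and shift-invariant. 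Since $X_{\boldsymbol{\sigma}}$ and $X_B$ are minimal and recognizability precludes shift-periodic points, every invariant probability measure is non-atomic, so $\mathcal D$ and $\mathcal E$ are null for every fully supported invariant probability measure, establishing the almost-conjugacy.

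For the second assertion, I would invoke the Rokhlin-tower partitions associated to recognizability (Section \ref{sec:generating-partitions}): for each $n$, recognizability of $\sigma_{[0,n)}$ yields a partition of $X_{\boldsymbol{\sigma}}$, modulo the null set $\mathcal D$, into exactly $|\mathcal A|=k$ Rokhlin towers $\mathcal T_n^{(a)}$, where $\mathcal T_n^{(a)}$ has base the cylinder of points whose centred $\sigma_{[0,n)}$-representation $(0,x)$ satisfies $x_0=a$, and height $|\sigma_{[0,n)}(a)|$. The refinement from level $n$ to level $n+1$ is effected by cutting each $\mathcal T_n^{(a)}$ into $m_{a,b}$ sub-columns (where $m_{a,b}$ is the corresponding entry of $M_{\sigma_n}$) and stacking them according to the word $\sigma_n(b)$, which is precisely a cutting-and-stacking step. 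Because the sequence of these partitions generates the Borel $\sigma$-algebra of $(X_{\boldsymbol{\sigma}},\nu)$ and only $k$ towers appear at each stage, $(X_{\boldsymbol{\sigma}},T,\nu)$ is measurably conjugate to a cutting-and-stacking transformation on a Lebesgue space of measurable rank at most $k$.

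The main obstacle is the verification that the exceptional set $\mathcal D$ really is null for \emph{every} invariant measure and not merely for one distinguished choice, together with the requirement that the reconstruction of $\omega(y)$ via iterated recognizability stabilise into a coherent path rather than degenerating into the boundary behaviour just described; both points ultimately rest on pushing through the consistency of centred $\sigma_n$-representations across all levels, which is exactly what two-sided recognizability guarantees. The cutting-and-stacking part is then a bookkeeping exercise, its only subtlety being that one must check that the natural identification across levels respects the $T$-action on the towers rather than merely their underlying partitions.
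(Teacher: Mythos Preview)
The paper does not give its own proof of this theorem; it is quoted verbatim from \cite[Theorems~6.5 and~6.7]{BSTY} and used as a black box. Your sketch is in fact the standard construction carried out in that reference, so the overall strategy is correct.

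Two imprecisions are worth flagging. First, your claim that ``the everywhere-growing hypothesis ensures both ends of the window tend to $\pm\infty$'' fails precisely on the minimal and maximal paths: for a minimal path the position $k_n$ is identically $0$, so the window $[-k_n,\,|\sigma_{[0,n)}(a_n)|-k_n)$ only grows to the right, and $\Phi$ as you describe it produces only a one-sided sequence. One must either define $\Phi$ on $X_B\setminus\mathcal E$ from the outset, or extend $\Phi$ to $\mathcal E$ by a limiting argument (compactness of $X_{\boldsymbol\sigma}$) and accept that the extension need not be injective there. Second, the assertion that $\mathcal E$ is \emph{countable} is not true in general: the set of minimal (or maximal) paths in the natural Bratteli diagram of a non-finitary directive sequence can be uncountable. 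What one actually proves is that $\mathcal D$ is contained in the shift-orbit of $\bigcap_{n\geq 0}\bigcup_{a\in\mathcal A}\sigma_{[0,n)}([a])$, and this set has measure zero for every invariant probability measure because the towers have heights $h_n(a)\to\infty$ (this is exactly Lemma~\ref{partition-generates-mble}); countability is not needed. With these two points corrected, your argument goes through and matches the proof in \cite{BSTY}.
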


The model theorem of \cite{HPS:1992} tells us  that any  minimal $S$-adic shift  has a  proper
 Bratteli-Vershik  representation, i.e., a Bratteli-Vershik representation where $X_B$ has only one maximal and one minimal path, so that  the successor map $\varphi_\omega$ is a homeomorphism.    If in addition,  this proper   representation defines  a   recognizable directive sequence, 
then it  defines   a proper $S$-adic  representation. Note though that 
 this proper $S$-adic representation may no longer
 be finitary, even  if the original directive sequence is.   By choosing to work  with a  non-proper representation   we  take advantage of  the finitary nature of the directive  sequence. 

\subsection{Generating partitions}\label{sec:generating-partitions}

Substitution dynamical systems have received much attention in the past decades, and this is in no small  part due to the fact that aperiodic substitution shifts possess a     natural sequence of partitions $(\mathcal Q_n)$, consisting of   Rokhlin towers,   which generates in measure (see below for a definition). This is a consequence of the fact that 
 $\sigma$ is recognizable
   \cite{Mosse:92,Mosse:96, Bezugly:2009}.
We refer the reader to \cite{Fog02,Queffelec:10} for expositions of the basic aspects of these systems and all the undefined terms we use below.
This extends to the $S$-adic case and 
we  present 
a sequence of partitions $({\mathcal Q}_n)$  relying  here also on  the recognizability property, such as described  in Section \ref{subsec:recog}.

Given a primitive directive sequence~$\boldsymbol{\sigma}$ such that $X_{\boldsymbol{\sigma} }$ is aperiodic, and $n\in \mathbb N$, for a letter $a\in \mathcal A$ define
\[B_n(a):=\sigma_{[0,n)}([a])\]
 and for a word $w\in \mathcal A^{*}$, define \begin{equation}\label{eq:noth}
 h_n(w) := |\sigma_{[0,n)}(w)|. 
 \end{equation}
Let
\begin{equation}\label{eq:partisadic}
\mathcal{Q}_n = \{ T^k \sigma_{[0,n)}([a]):\,  0 \leq k<h_n(a) \}.
\end{equation}

Note that for all $n$, $\mathcal{Q}_n$ is a cover of $X_{\boldsymbol \sigma}$. Indeed, for every 
$\ell \in {\mathbb N}$
$$
X_{\sigma}^{(\ell)} =\{T^k \sigma_\ell  (x) : x \in  X_{\sigma}^{(\ell+1)}, 0   \leq    k< |\sigma_\ell (x_0 )|\}$$
(this equality is true without assuming recognizability), thus by iterating
$$
X_{\sigma} =\{T^k  \sigma_{[0,n)} (x): x  \in  X_{\sigma}^{(n)},  0   \leq   k<  h_n(x_0)\}.$$
Then, using the partition of  $X_{\sigma}^{(n)}$  by cylinders $  [a]$, $a \in  {\mathcal A}$, we get
that  $\mathcal{Q}_n $ is a cover.

We call   $\mathcal T_n(a):=\cup_{   0\leq k<h_n(a)      } T^k \sigma_{[0,n)}([a])$ the {\em  $n$-tower defined by $a$},  we call $T^k \sigma_{[0,n)}([a])$ the {\em $k$-th level} of this tower, and     $B_n(a)$  its {\em base}.   Thus the elements of $\mathcal{Q}_n$ can be arranged to form  $  |\mathcal A|$ towers. 
 If
${\boldsymbol{\sigma}}$ is everywhere growing,  then 
   the height $h_n(a)$ of each $n$-tower  $\mathcal T_n(a)$  increases to $\infty$ as $n$ grows.
   
    We say that a sequence $(k_n, a_n )_{n\in\mathbb N}$ is a \emph{$(\mathcal{Q}_n)$-address} for $x \in X_{\boldsymbol{\sigma}}$  if   $x\in T^{k_n} \sigma_{[0,n)}([a_n])$ for each $n\in \mathbb N$, where $0\leq k_n <  |\sigma_{[0,n)}( b_n)|$.
    Each point $x \in X_{\boldsymbol{\sigma}}$ has at least one $(\mathcal{Q}_n)$-address. 
    
Let $\mu$ be  a shift invariant probability measure on~$X_{\boldsymbol{\sigma}}$.
The covers $(\mathcal{Q}_n)_{n=0}^{\infty}$ are \emph{generating in  measure} if $\mu$-almost every $x \in X_{\boldsymbol{\sigma}}$ has a $(\mathcal{Q}_n)$-address that uniquely determines~$x$. 

We next identify the importance of recognizability in ensuring that $(\mathcal Q_n)$ is a sequence of partitions.
Indeed it is straightforward to check that if $\boldsymbol{\sigma}$ is recognizable, then each $\mathcal{Q}_n$ is a partition of $X_{\boldsymbol{\sigma}}$, i.e.,  each point has exactly one $(\mathcal{Q}_n)$-address. Note  also  that since $\sigma_n(X_{\boldsymbol{\sigma}}^{(n+1)})\subseteq X_{\boldsymbol{\sigma}}^{(n)}$, the sequence $(\mathcal{Q}_n)_{n=0}^{\infty}$ is \emph{nested}, i.e.,  for each $n\in\N$, every element of $\mathcal{Q}_{n+1}$ is a subset of some element of $\mathcal{Q}_n$. We summarise this as follows.

\begin{lemma}
 Let $\boldsymbol{\sigma}$ be a  sequence of substitutions defined on $\mathcal A$. If $\boldsymbol{\sigma}$ is recognizable,  then the sequence of covers $(\mathcal{Q}_n)_{n=0}^{\infty}$ defined in \eqref{eq:partisadic} is a nested sequence   of partitions of $X_{\boldsymbol{\sigma}}$. 
\end{lemma}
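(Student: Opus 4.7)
The plan is to verify that the unique $(\mathcal{P}_n)$-address of each $x \in X_{\boldsymbol{\sigma}}$ determines $x$ uniquely. The paragraph preceding the lemma has already explained that recognizability forces each $\mathcal{P}_n$ to be a partition, and that the $(\mathcal{P}_n)$-address of $x$ can be read off from the iterated centred $\sigma_{[0,n)}$-representation $(K_n, x^{(n)})$ of $x$ in $X_{\boldsymbol{\sigma}}^{(n)}$, with $0 \leq K_n < h_n(x^{(n)}_0)$ and address $(K_n, x^{(n)}_{-1} \cdot x^{(n)}_0)$. What remains is to show that no two distinct points of $X_{\boldsymbol{\sigma}}$ can share the same address at every level.

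Suppose then that $y \in X_{\boldsymbol{\sigma}}$ shares the $(\mathcal{P}_n)$-address of $x$ at every level. Both $x$ and $y$ lie in $T^{K_n}\sigma_{[0,n)}([a_n \cdot b_n])$, and this set pins the content of its members on the window $[-L_n, R_n)$ to equal $\sigma_{[0,n)}(a_n b_n)$, where
\[
L_n := h_n(a_n) + K_n, \qquad R_n := h_n(b_n) - K_n.
\]
Hence $x$ and $y$ agree on $[-L_n, R_n)$ for every $n$, and it suffices to prove $L_n \to \infty$ and $R_n \to \infty$, which together force $x = y$. The bound $L_n \geq h_n(a_n) \to \infty$ is immediate from the everywhere-growing assumption together with the finiteness of $\mathcal{A}$.

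For the right extent, comparing the addresses at consecutive levels yields the recursion $R_{n+1} = R_n + h_n(w_n)$, where $w_n$ is the suffix of $\sigma_n(b_{n+1})$ strictly following the position at which $b_n$ occurs; this comes from tracking how $K_n$ embeds into $K_{n+1}$ through the factorisation $\sigma_{[0,n+1)} = \sigma_{[0,n)} \circ \sigma_n$. Thus $R_n$ is nondecreasing, and whenever $w_n$ is nonempty the increment is at least $\min_{c \in \mathcal{A}} h_n(c) \to \infty$ by everywhere growing, so $R_n \to \infty$. The main obstacle, where I expect the real technical work, is the residual ``max-path'' regime in which $w_n$ is empty for all sufficiently large $n$; here $R_n$ need not grow from the window analysis alone, and one must fall back on the nested relations $x^{(n)} = T^{k'_n}\sigma_n(x^{(n+1)})$, using the injectivity of each $\sigma_n$ on $X_{\boldsymbol{\sigma}}^{(n+1)}$ supplied by recognizability to propagate the shared pair $(a_n, b_n) = (x^{(n)}_{-1}, x^{(n)}_0)$ outward to every coordinate of $x^{(n)}$, and hence to conclude $x = y$ via injectivity of $\sigma_{[0,n)}$.
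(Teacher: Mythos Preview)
Your window computation is correct and is in fact more careful than the paper's: membership in $T^{K_n}\sigma_{[0,n)}([a_n\cdot b_n])$ pins down $x$ on $[-h_n(a_n)-K_n,\,h_n(b_n)-K_n)$, not on $[-h_n(a_n),h_n(b_n))$ as the paper writes. You also correctly isolate the max-path regime (where $R_n=h_n(b_n)-K_n$ stays bounded) as the place where the argument needs more than the window estimate. The paper's proof simply does not address this regime.

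The genuine gap is in your proposed fix for that regime, and it cannot be filled: the lemma as stated is false. Recognizability gives injectivity of each $\sigma_n$ on $X_{\boldsymbol\sigma}^{(n+1)}$, but that only lets you conclude $x=y$ once you know $x^{(n)}=y^{(n)}$ for some $n$; the address records only $x^{(n)}_{-1}$ and $x^{(n)}_0$, not $x^{(n)}_1$, so there is nothing to propagate outward from. Concretely, take the stationary Thue--Morse sequence $\sigma(a)=ab$, $\sigma(b)=ba$, write $v_a$ for the left-infinite $\sigma^2$-fixed word ending in $a$ and $u_a,u_b$ for the right-infinite $\sigma^2$-fixed words, and set $x:=T^{-1}(v_a\cdot u_a)$, $y:=T^{-1}(v_a\cdot u_b)$. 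These are distinct points of $X_\sigma$ (they differ at index $1$), yet a direct computation shows $x^{(2m)}=x$, $y^{(2m)}=y$, $x^{(2m+1)}=T^{-1}(v_b\cdot u_a)$, $y^{(2m+1)}=T^{-1}(v_b\cdot u_b)$, with $K_n=2^n-1$ throughout; since the left halves agree, $(x^{(n)}_{-1},x^{(n)}_0)=(y^{(n)}_{-1},y^{(n)}_0)$ for every $n$, so $x$ and $y$ share the same $(\mathcal P_n)$-address. To separate points topologically one needs a partition that also records $x^{(n)}_1$ (a three-letter version of $\mathcal P_n$), not the two-letter towers used here.
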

We will see in  Lemma \ref{partition-generates-mble} that the sequence  $(\mathcal Q_n)$ generates in measure, and  therefore  we can use it when we investigate measurable eigenvalues in Section \ref{measurable}.

Incidence matrices  of substitutions  allow a partial description of  towers. Indeed, the incidence matrix $M_{\sigma_{n-1}}= (m_{ij})$ for $\sigma_{n-1}$ gives us partial information about how elements from $\mathcal{Q}_{n}$ are built from  elements of $\mathcal{Q}_{n-1}$. For, recalling that   $m_{ab}$  equals the number of occurrences of the  letter $a$ in the image of the letter $b$ under $\sigma_{n-1}$, 
$\mathcal T_{n}( a)$  consists of $m_{ba}$ copies of a subtower of  $\mathcal T_{n-1}(b)$.  What the incidence matrix does {\em not} tell us is in which order we stack these subtowers;  the order of arrangement of the subtowers in $\mathcal T_{n}(a)$ is given by $\sigma_{n-1}(a)$. Indeed,  define
$ t_n(a,b): = \{ 0\leq t<  h_n(a): T^t (B_n(a))\subset B_{n-1}(b)\}.$ 
One has   $T^t (B_n(a)) \subset  B_{n-1}(b)$ if and only if $b$ occurs  
  as the $j$-th letter of $ \sigma_{n-1}(a)$ and $t=h_{n-1}(\sigma_{n-1}(a)_{[0,j)})$.  This implies that   $ t_n(a,b)\neq \emptyset$ if and only if $b$ appears in $\sigma_{n-1}(a)$,
and that the  cardinality of $t_n(a,b)$ equals the $(b,a)$ entry  $m_{ba}$ of $M_{\sigma_{n-1}}$.  See Figure \ref{fig:partitionQ}  for an illustration, and \cite[Chapter 6]{CANT} for an exposition.

\begin{figure}[h]

 \begin{center}
  \begin{tikzpicture}[scale=0.75]

   \draw [very thick] (6,0) rectangle (12,12);
   \draw [very thick] (6,1) -- (12,1);
   \draw [very thick] (6,4) -- (12,4);
   \draw [very thick] (6,5) -- (12,5);
   \draw [very thick] (6,6) -- (12,6);
   \draw [very thick] (6,7) -- (12,7);
   \draw [very thick] (6,8) -- (12,8);
   \draw [very thick] (6,9) -- (12,9);
   \draw [very thick] (6,11) -- (12,11);
   \draw [very thick] (6,12) -- (12,12);
      \node at (9,0.5) {$B_n(a)$};
   \node at (9,4.5) {$T^{h_{n-1}(c)-1}B_n(a)$};
   \node at (9,5.5) {$T^{h_{n-1}(c)}B_n(a)$};
   \node at (9,7.5) {$T^{h_{n-1}(cb)-1}B_n(a)$};
     \node at (9,8.5) {$T^{h_{n-1}(cb)}B_n(a)$};
   \node at (9,11.5) {$T^{h_{n-1}(cbd)-1}B_n(a)$};

   \draw [dashed,very thick] (6.1,0.1) rectangle (11.9,4.9);
   \draw [dashed,very thick] (6.1,5.1) rectangle (11.9,7.9);
   \draw [dashed,very thick] (6.1,8.1) rectangle (11.9,11.9);

   \node at (9, 2.5) {$\vdots$};
  \node at (9, 6.7) {$\vdots$};
  \node at (9, 10) {$\vdots$};

  \draw [->, >=stealth] (5.8,0.5) -- (5.8,1.5);
  \node [black] at (5.5,1) {$T$};
\draw [decorate,decoration={brace,amplitude=10pt,mirror,raise=4pt},yshift=0pt]
(12.2,0.2) -- (12.2,4.8) node [black,midway,xshift=1.5cm] {$\subseteq \cT_{n-1}(c)$};
  
\draw [decorate,decoration={brace,amplitude=10pt,mirror,raise=4pt},yshift=0pt]
(12.2,5.2) -- (12.2,7.8) node [black,midway,xshift=1.5cm] {$\subseteq \cT_{n-1}(b)$};

\draw [decorate,decoration={brace,amplitude=10pt,mirror,raise=4pt},yshift=0pt]
(12.2,8.2) -- (12.2,11.8) node [black,midway,xshift=1.5cm] {$\subseteq \cT_{n-1}(d)$};

\node (e0) at (5.7,0) {};
\node (e1) at (5.7, 5) {};
\node (e2) at (5.7, 12) {};

  \end{tikzpicture}
 \end{center}
 \caption{In this example, we  construct  $\mathcal T_n(a)$ with $\sigma_{n-1}(a)= cbd$, by concatenating portions of the towers of   level $n-1$ for $c$, $b$ and then $d$.
}\label{fig:partitionQ}
 
\end{figure}

\section{From combinatorics  to spectral theory}\label{sec:comspec}
We  first introduce in Section \ref{sec:straight} two key combinatorial  notions here, namely  the notions of  essential words and of  straightness   for  discussing limit words.
We then provide some families of examples in Section \ref{subsec:ex}  and   discuss  in Section \ref{subsec:ecs}
eigenvalues  and coboundaries for  stationary directive  sequences,  i.e., for substitutions.  Lastly, Section \ref{subsec:height} deals with the notion of height for constant-length substitutions.

\subsection{Limit words and straightness}\label{sec:straight}

We identify some distinguished points in $X_{\boldsymbol{\sigma}}$, namely limit words, which are analogues of substitution fixed points.

\begin{definition}[Essential and fully essential  words]
We say that a word $w$   is {\em essential} (for $ \boldsymbol{ \sigma} $) if it occurs in $\mathcal{L}_{\boldsymbol{\sigma}}^{(n)}$ for  infinitely many $n$. An essential word is {\em fully essential} if it occurs in  $\mathcal{L}_{\boldsymbol{\sigma}}^{(n)}$  for each $n$.
\end{definition}

\begin{example}
Consider  the substitutions
 \begin{align*}
   a  \stackrel{\tau_1}{\mapsto} aaaba,  \ \ &   a  \stackrel{\tau_2}{\mapsto}  bbbab
   \\ b \stackrel{\tau_1}{\mapsto}  ababa, \ \ &   b  \stackrel{\tau_2}{\mapsto}   babab.\\
\end{align*}
The words $ab$ and $ba$ are fully essential  for any directive sequence. The word $aa$ appears in  $\mathcal{L}_{\boldsymbol{\sigma}}^{(n)}$ if and only if 
$\sigma_n=\tau_1$, and   $bb$ appears in  $\mathcal{L}_{\boldsymbol{\sigma}}^{(n)}$ if and only if 
$\sigma_n=\tau_2$.
 Now take any directive sequence $ \boldsymbol{ \sigma} $ from the set $\{\tau_1, \tau_2\}$. Then $aa$ (respectively $bb$) is an essential word for $ \boldsymbol{ \sigma} $   if and only if we  see $\tau_1$ (respectively $\tau_2$) infinitely often in $ \boldsymbol{ \sigma}$.  Note that 
 $aa$ (respectively $bb$) appears in $\mathcal{L}_{\boldsymbol{\sigma}}^{(0)}$    if and only if
$\sigma_0=\tau_1$ (respectively $\sigma_0=\tau_2$).  In particular, essential words do not necessarily appear in  $\mathcal{L}_{\boldsymbol{\sigma}}^{(0)}$.
\end{example}

\begin{example}\label{ex:running-second}
  Recall the directive sequence \[ \sigma, \tau, \sigma, \sigma, \tau, \sigma, \sigma , \sigma, \tau , \sigma, \sigma ,\dots \, \]
 from Example~\ref{ex:running-first}.
It can be verified that $\{ab,ac,ba, bc, ca ,cb \}$ are all fully essential. 
For example, we show that $ca$ is fully essential. 
We  have  $ab  \in \mathcal{L}_{\boldsymbol{\sigma}}^{(k)}$ for each $k$. Therefore, if $\sigma_{k-1}= \tau$, we have $ca \in  \mathcal{L}_{\boldsymbol{\sigma}}^{(k-1)}$, since $\tau$ occurs as an isolated letter in the directive sequence $\boldsymbol{\sigma}$.  Also, if $ca\in  \mathcal{L}_{\boldsymbol{\sigma}}^{(k-1)}$  and $\sigma_{k-2}= \sigma$, then $ca \in \mathcal{L}_{\boldsymbol{\sigma}}^{(k-2)}$. 
Hence  $ca \in \mathcal{L}_{\boldsymbol{\sigma}}^{(j)}$ for all $j\leq k-1$. Since $\tau$ appears infinitely often, the claim follows.
 
 Otherwise, the word $bb$ is essential, but does not appear at any level 
  $\mathcal{L}_{\boldsymbol{\sigma}}^{(k)}$ where $\sigma_k=\tau$, and $aa$ is essential, only appearing in  $\mathcal{L}_{\boldsymbol{\sigma}}^{(k)}$ when $\sigma_k=\tau$.  Finally $cc$ does not belong to any   $\mathcal{L}_{\boldsymbol{\sigma}}^{(k)}$.

\end{example}

{\em Telescoping} a directive sequence $(\sigma_n)_{n\geq 0}$ means taking a sequence $(n_k)_{k\geq 1}$ and considering instead the directive sequence $(\tilde \sigma_k)$ where $\tilde \sigma_0 = \sigma_{[0,n_1)}$ and $\tilde \sigma_k = \sigma_{[n_{k},n_{k+1})}$ for $k\geq 1$. Telescoping a directive sequence does not change the dynamics, i.e.,    $X_{\boldsymbol{\sigma}}^{(0)} =  X_{\boldsymbol{\tilde \sigma}}^{(0)}$. Note that the telescoped sequence 
${\boldsymbol{\tilde  \sigma}}$ may not have the same set of essential words as  the original directive sequence, in particular it may have fewer essential words.

Suppose that  the word $ab$ is essential. If there is a sequence $(n_k)_{k\geq 1}$  such that for each $k$, $ab\in \cL_{\boldsymbol{\sigma}}^{(n_k)}$,  $\sigma_{[0,n_k)}(a)$  shares a common prefix with  $\sigma_{[0,n_{k+1})}(a)$ of  increasing length, and $\sigma_{[0,n_k)}(b)$  also shares a common suffix with $\sigma_{[0,n_{k+1})}(b)$ of increasing length, then the sequence of finite words $(\sigma_{[0,n_k)} (a)\cdot   \sigma_{[0, n_k)} (b))_{k\geq 1}$ converges to a bi-infinite sequence in $X_{\boldsymbol{\sigma}}$. We denote  it by $\lim_k \sigma_{[0,n_k)} (a)\cdot   \lim_k  \sigma_{[0, n_k)} (b).$ Here the indices to the right of the radix point $``\cdot"$ start at 0, i.e., $x\cdot y= \dots x_{-1} y_0 y_1 \dots$.   The same  convergence property holds similarly    for one-sided words.  Recall that we assume that     each letter in $\mathcal A$ appears in each language $\mathcal L^{(n)}_{\boldsymbol\sigma}$.

\begin{definition} [Limit word]\label{def:limit-word} We say that $x$ in the two-sided shift  $ X_{\boldsymbol{\sigma}}$ is a  {\em limit word} if $x= \lim_{k} \sigma_{[0,n_k)}(a)\cdot  \sigma_{[0,n_k)}(b)$ for some essential word $ab$ that belongs to $  \mathcal{L}_{\boldsymbol{\sigma}}^{(n_k)}   $ for each $k$.
We say that  an element $x$ in the one-sided shift $ \tilde{X}_{\boldsymbol{\sigma}}$ is a  {\em limit word} if $x= \lim_{k} \sigma_{[0,n_k)}(a)$ for some sequence $(n_k)$.
 \end{definition}

For each $n$, two-sided limit words of recognizable directive sequences belong to the base of some $n$-tower in the partition $\mathcal Q_n$  from (\ref{eq:partisadic}). 
Note that if the word $ab$ is essential, and $ \boldsymbol{ \sigma} $ is everywhere growing,  then compactness implies that  there is at least one sequence $(n_k)$ such that 
\[    \lim_k \sigma_{[0,n_k)} (a)\cdot   \lim_k  \sigma_{[0, n_k)} (b)\] belongs to $X_{\boldsymbol{\sigma}}$.
Different sequences $(n_k)$ may lead to different limit words; this motivates the definition of straightness below. We define 

\[
{\boldsymbol{ a}}\cdot {\boldsymbol{ b}}:= \{   u   \in {\mathcal A}^{\mathbb Z}: \exists (n_k) \mbox{ such that } 
ab\in  \mathcal{L}_{\boldsymbol{\sigma}}^{(n_k)} \mbox{ for each }k \mbox{ and }  u=    \lim_k \sigma_{[0,n_k)} (a)\cdot   \lim_k  \sigma_{[0,n_k)} (b)
\} 
\]

and similarly
 \[
 {\boldsymbol{ a}}:=\{   u \in {\mathcal A}^{\mathbb N}: \exists (n_k) \mbox{ such that }  u=   \lim_k  \sigma_{[0,n_k)} (a)\}.
\]


 If  ${\boldsymbol{ a}}\cdot {\boldsymbol{ b}}$  consists of a unique limit word, we use ${\boldsymbol{ a}}\cdot {\boldsymbol{ b}}$     to denote this   limit word.  Similarly if ${\boldsymbol{ a}}$ is a singleton we use it to denote that limit word.  We will work mainly with one-sided limit  words in the following.

We note that there are alternative definitions of limit words, such as discussed e.g.   in \cite[Section 4]{BSTY}. Indeed a second  and natural definition of a  limit word 
consists in  considering elements of   $\bigcap_{n\in\mathbb{N}} \sigma_{[0,n)}(\mathcal{A}^\mathbb{Z})$  (this is case for instance in 
\cite{Arnoux-Mizutani-Sellami:2014,Pytheas-2020}).
 Observe that this definition of a limit word can yield a different shift $X_{\boldsymbol{\sigma}}$ to ours, already in  the non-minimal substitutive case such as stressed in \cite{AubSab}. 
Indeed   take the constant sequence $\boldsymbol{\sigma}$ taking the constant value~$\sigma:\, 0 \mapsto 00,\, 1 \mapsto 11$  on the alphabet $\{0,1\}$. Then 
 $\dots0011\dots$ does not belong to~$X_{\boldsymbol{\sigma}}$, but does belong to  $\bigcap_{n\in\mathbb{N}} \sigma^n (\{0,1\}^\mathbb{Z})$. However, we do have the following lemma, which partly links these two notions of a limit word.
 
 \begin{lemma}
Let $\boldsymbol{\sigma}$ be an everywhere growing directive sequence defined on  the alphabet ${\mathcal A}$. 
 If $ u^{(0)}\in \bigcap_{n\in\mathbb{N}} \sigma_{[0,n)}\left( X_{\boldsymbol{\sigma}}^{(n)} \right)$,  then there exist $a,b \in \mathcal A$ such that 
  $u^{(0)}\in \boldsymbol{ a\cdot b}$.
  Conversely, 
   if $u^{(0)} \in \boldsymbol{ a\cdot b}$ for $a,b \in \mathcal A$,  then there is a sequence $(m_k)$ such that  $u^{(0)}\in \bigcap_{k} \sigma_{[0,m_k)}\left( X_{\boldsymbol{\sigma}}^{(m_k)} \right)$.
 \end{lemma}
          \begin{proof}
By hypothesis, for each $n$ there exists  $u^{(n)}\in  X_{\boldsymbol{\sigma}}^{(n)}$ such that $u^{(0)}=\sigma_{[0,n)}( u^{(n)}   )$.
           We can pass to a subsequence $(n_k)$, so that there exist letters $a,b$ with $ab$ such that 
          $u^{(n_k)}_{[-1,0]}= ab$ in $ \mathcal{L}_{\boldsymbol{\sigma}}^{(n_k)}$ for each $k$. In particular  $ab$ is essential.
                 The first part of the  result follows using the assumption that $\boldsymbol{\sigma}$ is everywhere growing.
                                                              
Conversely, if $u^{(0)} \in \boldsymbol{ a\cdot b}$,  then by definition, there is a sequence $(n_k)$ such that $ab$  belongs to   $\mathcal{L}_{\boldsymbol{\sigma}}^{(n_k)}$ for each $k$ and   $u^{(0)} =    \lim_k \sigma_{[0,n_k)} (a)\cdot   \lim_k  \sigma_{[0,n_k)} (b)$. 
                 Let   $u^{(n)}=  \lim_k \sigma_{[n,n_k)} (a)\cdot   \lim_k  \sigma_{[n,n_k)} (b)$. One has $u^{(n)} \in   X_{\boldsymbol{\sigma}}^{(n)}$. The result follows. 
                           \end{proof}
          
The next lemma shows that there are finitely many  limit words for the systems we consider.
\begin{lemma}\label{lem:finitely many limit words}
 Let $\boldsymbol{\sigma}$ be   a 
  sequence of substitutions defined on $\mathcal A$. Then for each 
 letter, the set ${\boldsymbol{ a}}$  contains at most $|\mathcal A|$ points, and for each
  essential word $ab$, the set 
 ${\boldsymbol{ a}}\cdot { \boldsymbol{ b}}$ contains at most $|\mathcal A|^2$ points.
\end{lemma}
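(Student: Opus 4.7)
The plan is to reduce the problem to a bounded count of length-$\ell$ prefixes of the finite words $w_n := \sigma_{[0,n)}(a)$. First I will show that for each fixed $\ell$ only a constant number (at most $|\mathcal A|$) of length-$\ell$ prefixes appear among the $w_n$, and then deduce finiteness of $\boldsymbol{a}$ via an inverse-limit argument.

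Since $\boldsymbol{\sigma}$ is primitive and hence everywhere growing, for every $\ell\ge 1$ there exists $m_\ell$ such that $h_{m_\ell}(c)\ge\ell$ for every letter $c\in\mathcal A$. For $n>m_\ell$, factor $w_n=\sigma_{[0,m_\ell)}(\sigma_{[m_\ell,n)}(a))$ and let $e_n\in\mathcal A$ be the first letter of $\sigma_{[m_\ell,n)}(a)$; then the first $\ell$ letters of $w_n$ coincide with the first $\ell$ letters of $\sigma_{[0,m_\ell)}(e_n)$. Consequently the set $L_\ell := \{(w_n)_{[0,\ell)} : n>m_\ell\}$ has cardinality at most $|\mathcal A|$.

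Next, the truncation map $L_{\ell+1}\to L_\ell$ is surjective: any prefix $p$ appearing infinitely often among the $w_n$ admits, by pigeonhole over its $|\mathcal A|$ possible one-letter extensions, an extension which again appears infinitely often. Therefore $(|L_\ell|)_\ell$ is non-decreasing and bounded by $|\mathcal A|$, so it stabilises and the truncation maps are eventually bijective. A diagonal extraction then identifies the inverse limit of the $L_\ell$ with $\boldsymbol{a}$: given a coherent sequence $(p_\ell)$, each $A_\ell := \{n : (w_n)_{[0,\ell)} = p_\ell\}$ is infinite with $A_{\ell+1}\subseteq A_\ell$, so one can pick $n_1<n_2<\dots$ with $n_k\in A_k$, and then $w_{n_k}\to u$ with $u_{[0,\ell)}=p_\ell$. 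This yields $|\boldsymbol{a}|\le|\mathcal A|$.

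For the two-sided statement, the same argument bounds the number of right-infinite limits of $\sigma_{[0,n_k)}(b)$ by $|\mathcal A|$, and a symmetric argument using the last letter of $\sigma_{[m_\ell,n)}(a)$ bounds the number of left-infinite limits of $\sigma_{[0,n_k)}(a)$ by $|\mathcal A|$. The essentiality hypothesis on $ab$ guarantees that $\{n : ab\in\mathcal L_{\boldsymbol\sigma}^{(n)}\}$ is infinite, providing the indices from which the diagonal extraction is performed. Since each element of $\boldsymbol{a}\cdot\boldsymbol{b}$ is determined by its positive and negative halves, one obtains $|\boldsymbol{a}\cdot\boldsymbol{b}|\le|\mathcal A|^2$. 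The only mild technicality is the bookkeeping in the diagonal extraction and the switch between prefix and suffix conventions; the crucial pigeonhole bound on $|L_\ell|$ is immediate from the factorization $\sigma_{[0,n)}=\sigma_{[0,m_\ell)}\circ\sigma_{[m_\ell,n)}$, and notably recognizability plays no explicit role here — only the finiteness of $\mathcal A$ together with the everywhere-growing property inherited from primitivity is used.
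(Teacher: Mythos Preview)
Your argument is correct, and in fact more explicit than the paper's. The paper's two-line proof asserts that recognizability forces any element of $\boldsymbol{a}\cdot\boldsymbol{b}$ to be of the form $\lim_k\sigma_{[0,n_k)}(\alpha)\cdot\lim_k\sigma_{[0,n_k)}(\beta)$ for some fixed pair $(\alpha,\beta)$, and then concludes by finiteness of $\mathcal A$; but as written this is close to the definition of $\boldsymbol{a}\cdot\boldsymbol{b}$ and does not by itself bound its cardinality, since different subsequences $(n_k)$ attached to the same pair $(\alpha,\beta)$ could a priori produce different limits. Your pigeonhole argument fills exactly this gap: the factorization $\sigma_{[0,n)}=\sigma_{[0,m_\ell)}\circ\sigma_{[m_\ell,n)}$ shows that the length-$\ell$ prefix of $\sigma_{[0,n)}(a)$ is determined by a single letter $e_n\in\mathcal A$, hence at most $|\mathcal A|$ such prefixes occur, and the stabilising inverse-limit bookkeeping then yields $|\boldsymbol a|\le|\mathcal A|$ and $|\boldsymbol a\cdot\boldsymbol b|\le|\mathcal A|^2$. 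Your closing remark is also worth retaining: recognizability plays no role in your argument, only the everywhere-growing property (inherited from the standing primitivity assumption) and finiteness of $\mathcal A$ are used. One small cleanup: for the truncation $L_{\ell+1}\to L_\ell$ to be surjective as stated you should take $L_\ell$ to be the set of length-$\ell$ prefixes that occur for \emph{infinitely many} $n$, not merely for all $n>m_\ell$; your verbal justification already works with infinitely-often prefixes, so this is just a matter of aligning the definition with the argument.
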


\begin{proof}

 Fix $a\in \mathcal A$.  Consider an element $ \lim_k \sigma_{[0,n_k)} (a)$ of $\boldsymbol{ a}$. For each $m$, if $n_k>m$,  the word $\sigma_{[0,n_k)}(a)$  admits as a prefix one of the $|\mathcal A|$ words $\sigma_{[0,m)}(\alpha)$, $\alpha \in \mathcal A$. Thus there is a set of one-sided sequences of cardinality at most $|\mathcal A| $, such that whenever  $n_k\rightarrow \infty$, $\lim_{n_k\rightarrow \infty}\sigma_{[0,n_k)}(a)$ exists and is a sequence, then it must belong to this set. Hence $\boldsymbol{ a}$ contains at most $|\mathcal A| $ points. Similarly,   ${\boldsymbol{ a}}\cdot { \boldsymbol{ b}}$ contains at most $|\mathcal A| ^2$ points.

\end{proof}
\begin{example}
Consider the Thue-Morse substitution 
\begin{align*}
   a &  \stackrel{\sigma}{\mapsto}  ab\\
    b &  \stackrel{\sigma}{\mapsto}  ba. 
 \end{align*}
Then  ${\boldsymbol{ a}}\cdot {\boldsymbol{ b}}$ consists of two points, both of which equal the $\sigma$-fixed point starting with $b$ on the right; however on the left we can see 
 the left infinite words  fixed by  $\sigma^2$   defined either  by  $a$,  or by  $b$, i.e.,
 \[         {\boldsymbol{ a}}\cdot {\boldsymbol{ b}}= \{\lim_{k\rightarrow \infty} \sigma_{[0,2k)}(a) \cdot   \lim_{n\rightarrow \infty} \sigma_{[0,2k)}(b),   \lim_{k\rightarrow \infty} \sigma_{[0,2k+1)}(a) \cdot   \lim_{n\rightarrow \infty} \sigma_{[0,2k+1)}(b)    \}.  \]
 \end{example}

A  necessary condition for most of  our results is that the directive sequence be {\em straight} (see Definition \ref{def:straight} below and  Definition  \ref{def:strongly-straight} for a  stronger notion). The terminology is  borrowed   from \cite{shimomura:20}, where it is defined for  directive sequences which are stationary.
 We thus  first   state  it  for substitutions
by introducing the  notion of   strong       straightness (see Definition \ref{def:straight-substitution}).   It  corresponds to  the  case where  the initial period equals  $1$  in   \cite[Section 2.1]{Host:1986}, which  allows us to  work with $p=1$ in the statement of Host's  theorem  from   \cite[Section 1.4]{Host:1986}.

\begin{definition}[Strongly straight substitution] \label{def:straight-substitution}
A  substitution $\sigma$ is {\em strongly straight} if  it is primitive and  for every letter $a$,  whenever $b$ is  the first letter of  $\sigma(a)$, then $\sigma(b)$ starts with $b$.

 \end{definition}

 \begin{example}\label{ex:straight-not-strongly}
 The  substitution $\tau_1$ below is strongly straight.  However $\tau_2$ is not, as $\tau_2(c)$ starts with $a$ but $\tau_2(a)$ does not start with $a$:
 \begin{align*}
   a   \stackrel{\tau_1}{\mapsto}  bca  \,\,\,\,\hspace{1em}  & a    \stackrel{\tau_2}{\mapsto}  bc
   \\  b  \stackrel{\tau_1}{\mapsto}  bbc \,\,\,\, \hspace{1em} & b  \stackrel{\tau_2}{\mapsto}  bbc\\
    \ c  \stackrel{\tau_1}{\mapsto} cac
 \,\,\,\, \hspace{1em}& c   \stackrel{\tau_2}{\mapsto} abc.
\end{align*}
\end{example} 
 Every  primitive substitution has a  power which is strongly  straight. As a  shift defined by a primitive  substitution $\sigma$ equals the shift defined by any of its powers, then we may assume, by taking a power of $\sigma$ if needed, that it is strongly straight.  
If $\sigma$ is strongly straight, then any one-sided $\sigma$-periodic point (i.e., any one-sided fixed point for  some power $\sigma^n$,  with $n$ positive)   is a   fixed point  of  the substitution $\sigma$ and 
for every letter
$a$,   $\lim \sigma^n (a)$ exists and is  a fixed point of $\sigma$.

Definition \ref{def:straight-substitution}  
 should be compared to the stronger Definition \ref{def:strongly-straight} that we introduce later for directive sequences.
  The next definition  for directive sequences is a less restrictive notion than that of strong straightness in the case that the directive sequence is stationary.
By this we mean that if $\sigma$ is strongly  straight, then it is straight,  and  that   straight substitutions  are not necessarily   straight. For instance, the substitution $\tau_2$ of Example \ref{ex:straight-not-strongly} is straight but not strongly straight.

\begin{definition}[Straightness] \label{def:straight}
A  directive sequence  $(\sigma_n)_{n\geq 0}$   is   {\em straight} if it is primitive and for each letter $a\in \cA$, the  set   $ \boldsymbol{ a}$ is a singleton. In other words,   $(\sigma_n)_{n\geq 0}$   is {\em straight} if  and only if   $\lim_{n\rightarrow \infty}\sigma_{[0,n)}(a)$ exists for each letter $a\in \cA$.
\end{definition}

Consider the bases $B_n(a)$  of the towers of  the generating  partition ${\mathcal Q_n}$  from Section \ref{sec:generating-partitions}. If $(\sigma_n)_{n\geq 0}$ is straight, then the right-infinite  part of the  two-sided word $\cap_{n_k\geq 0} B_{n_k}(a ) $ is equal to $ \boldsymbol{ a}  $  for any  increasing sequence $(n_k)_k$, and this will be useful when we  build eigenfunctions as limits of simple functions in later sections.


Note that we can always telescope a primitive sequence $(\sigma_n)_{n\geq 0}$ to a sequence \[\tilde\sigma_0:=\sigma_{[0,n_1)}, \tilde \sigma_1:= \sigma_{[n_1,n_2)}, \tilde\sigma_2 := 
\sigma_{[n_2,n_3)}, \dots\]
which is straight. However, if ${\bf \sigma}$  is finitary, we may lose this property when we telescope to  $(\tilde\sigma_n)$, and some of our later results are sensitive to this condition.    Take for example  two  substitutions $\sigma, \tau$ such that  $\sigma(a)$ starts with $b$, $\sigma(b)$ starts with $a$,  $\tau(a)$ starts with $a$ and $\tau(b)$ starts with $b$, and take ${\boldsymbol \sigma}= (\sigma_n)_{n\geq 0}$ where $\sigma_n= \tau$ except when $n\in \{2^k:k\in \N\}$, in which case $\sigma_n= \sigma$. Assume ${\boldsymbol \sigma}$ primitive  and ${\boldsymbol a}\neq {\boldsymbol b}$. In this case, to obtain straightness, we must telescope to levels $n_k$ where $n_{k+1}-n_{k}\rightarrow \infty$,  and the resulting directive sequence is not finitary.  
 
We say that  $(\sigma_n)_{n\geq 0}$
can be {\em boundedly telescoped} to a straight sequence  if we can telescope  via a syndetic sequence $(n_{k})$  
to a straight directive sequence, where a sequence is  syndetic if  its letters occur with bounded gaps.

\subsection{Examples}\label{subsec:ex}
We now illustrate the  notions of straightness and limit words  with   examples of families  of substitutions, 
namely  the Cassaigne-Selmer, Arnoux-Rauzy, Jacobi-Perron and Brun substitutions, which  are associated to
 multidimensional continued  fractions and  are used to  construct  shifts  with prescribed  eigenfunctions (see e.g. \cite{BST:20}). 
For all these families of substitutions, straightness usually holds. 

\begin{example}\label{ex:running-third}

Consider the directive  sequence
 \[ \boldsymbol{ \sigma}= \sigma, \tau , \sigma, \sigma, \tau,  \sigma, \sigma , \sigma, \tau,  \sigma, \sigma,\dots \]
 from Example~\ref{ex:running-second}.
 We have 
     \[   {\boldsymbol{ a}}=   {\boldsymbol{ b}}= {\boldsymbol{ c}}           = \left\{     \lim_{n\rightarrow \infty} \sigma_{[0,n)}(a)    \right\},\]   
so that  $\boldsymbol{\sigma}$ is  straight.

Similarly, any directive sequence from $\{\sigma, \tau \}$ where there are    infinitely many occurrences of the substitution $\tau$ and bounded runs of  consecutive occurrences of either the substitution $\sigma$, or the substitution $\tau$,
can be boundedly telescoped to a straight sequence.  

 Note that \[u=\lim_{n\rightarrow \infty} \sigma_{[0,n)}(b) \cdot \lim_{n\rightarrow \infty} \sigma_{[0,n)}(a), \mbox{ and } v=\lim_{n\rightarrow \infty} \sigma_{[0,n)}(c) 
 \cdot \lim_{n\rightarrow \infty} \sigma_{[0,n)}(a)\]
are distinct  two-sided limit words,
and  this directive sequence cannot be telescoped to a  directive sequence with a unique two-sided limit word. In other words, although $(X_{\boldsymbol \sigma},T)$ is guaranteed a proper Bratteli-Vershik representation, by the model theorem of  (Thm 4.7, \cite{HPS:1992}) for minimal systems, we do not know how to  obtain such a representation by simple manipulations of the data we are given.

\end{example}

\begin{example}
Consider the Cassaigne-Selmer substitutions (discussed e.g. in  \cite{CLL:17,CLL:21,BST:20})
\begin{align*}
  & a   \stackrel{\gamma_1}{\mapsto}  a   \hspace{1.3em} a    \stackrel{\gamma_2}{\mapsto}  b
   \\&  b  \stackrel{\gamma_1}{\mapsto}  ac \hspace{1em}b  \stackrel{\gamma_2}{\mapsto}  ac\\
    &c  \stackrel{\gamma_1}{\mapsto}  b
 \hspace{1.5em} c   \stackrel{\gamma_2}{\mapsto} c.
\end{align*}
A directive sequence from $ \{ \gamma_1, \gamma_2\}$ generates an aperiodic shift as soon as it is primitive, by \cite[Proposition 6]{CLL:17}.
As $\gamma_1$ is right-permutative and $\gamma_2$ is left-permutative, any primitive directive sequence from $\{ \gamma_1, \gamma_2\}$ is recognizable. We consider now a  primitive directive sequence $(\sigma_n)_n$.   By  \cite[Lemma 1]{CLL:17}, for all $N$, there exists  $i$ such that $\sigma_{N +2i}\neq  \sigma _{N+2i+1}$. In particular $ \gamma_1$ and  $ \gamma_2$  appear infinitely often,  and this implies that there  is only one  one-sided limit word; hence the directive sequence is straight, as every   $\boldsymbol{\alpha}$ consists of that unique limit word. To see this, we will show that the unique right-infinite limit word is $\lim_n\sigma_{[0,n)}(a)$. Every time $\sigma_n=\gamma_1$, the first letter of  $\sigma_{[n,N)}(a)$ and $\sigma_{[n,N)}(b)$ equals $a$, while  $\sigma_{[n,N)}(c)$ starts with $a$ or $b$.  Furthermore if $N$ is large enough so that  $\sigma_{[n,N)}$ contains two occurrences of $\gamma_1$, then  $\sigma_{[n,N)}(c)$ also starts with $a$. Now let $n$ grow, this means that each of  $\sigma_{[0,N)}(\alpha)$ starts with  $\sigma_{[0,n)}(a)$, where $n\rightarrow \infty$ as $N\rightarrow \infty$. The claim follows.
 \end{example}

 Therefore in the case where $\gamma_1$ and $\gamma_2$ each appear infinitely often the above example gives directive sequences whose corresponding $S$-adic systems have one left-infinite limit word and one right-infinite limit word, i.e.,  the corresponding Bratteli-Vershik diagrams have one maximal and one minimal path, although the substitutions are not proper.  Thus directive sequences with one unique left-infinite limit word and one unique right-infinite limit word are not in general proper. Furthermore, depending on the given  directive sequence, one may have to telescope unboundedly  in order to satisfy needed conditions in articles which consider eigenvalues of such systems via their Bratelli-Vershik representation, such as   \cite{CDHM:2003, Bressaud-Durand-Maass:2005, Bressaud-Durand-Maass:2010, Durand-Frank-Maass:2015}, which require that every substitution is proper.

\begin{example}\label{ex:AR}
Consider the Arnoux-Rauzy substitutions on the three-letter alphabet  $\{a,b,c\}$ (see \cite{Arnoux-Rauzy:91}):
\begin{align*}
  & a   \stackrel{\alpha_1}{\mapsto}  a   \hspace{1.3em} a    \stackrel{\alpha_2}{\mapsto}  ba \hspace{1.3em} a    \stackrel{\alpha_3}{\mapsto}  ca
   \\&  b  \stackrel{\alpha_1}{\mapsto}  ab \hspace{1em}b  \stackrel{\alpha_2}{\mapsto}  b  \hspace{1.7em} b  \stackrel{\alpha_3}{\mapsto}  cb\\
    &c  \stackrel{\alpha_1}{\mapsto}  ac
 \hspace{1em} c   \stackrel{\alpha_2}{\mapsto} bc  \hspace{1.2em} c    \stackrel{\alpha_3}{\mapsto}  c.
\end{align*}
Consider a directive sequence  where   each of the three substitutions occurs infinitely often; then    the directive sequence is primitive, the shift is aperiodic (also for subword complexity reasons, as in the previous example), and as each substitution in  $\{\alpha_1,\alpha_2, \alpha_3 \}$ is  right-permutative,  the directive sequence is recognizable. 
Furthermore  any primitive directive sequence is straight since any product of length 3 of these substitutions is proper. For more on  eigenvalues
 of Arnoux-Rauzy  shifts, see   \cite{Cassaigne-Ferenczi-Messaoudi:08}.

 \end{example}

\begin{example} \label{ex:JP} 
Similarly, the infinite  family of {\em Jacobi-Perron substitutions}  $\{ \sigma_{jk}\}_{0\leq j\leq k, k\neq 0}$ \cite[Equation (6.6)]{BST:20}, where $\sigma_{jk}$ is defined by 
 \begin{align*}
  & a   \stackrel{\sigma_{jk}}{\mapsto}  b     \\&  b  \stackrel{\sigma_{jk}}{\mapsto}  c \\
    &c  \stackrel{\sigma_{jk}}{\mapsto}   ab^jc^k,
\end{align*}
is left-permutative and so the directive sequence generates a recognizable shift, provided the shift is aperiodic. A primitive directive sequence is straight
  and here we only need to telescope any directive sequence to every third level to achieve this. For, if we telescope in this way, then the composed substitutions 
 $   \sigma_{j_1k_1}\circ \sigma_{j_2k_2} \circ \sigma_{j_3k_3}     $ will be  such that for each each $\sigma$ and each letter $a$,  $\sigma(a)$ starts with $a$.  \end{example}
 
 \begin{example}  \label{ex:Brun}
  Almost the
  same can be said of the family of (unordered) {\em Brun substitutions } (see \cite{Delecroix-Hejda-Steiner} and  \cite[Equation (6.7)]{BST:20}), defined on an arbitrary alphabet as 
   \begin{align*}
   j  \stackrel{\sigma_{ij}}{\mapsto}  ij    , k \stackrel{\sigma_{ij}}{\mapsto}  k \mbox{ for $k\neq j$}  .
\end{align*}
   Each substitution is right-permutative, so any primitive  directive sequence that generates an aperiodic shift is recognizable. And, provided that each substitution appears infinitely often, there is only one right-infinite limit word. 
  \end{example}

\subsection{Eigenvalues and coboundaries for substitution shifts }\label{subsec:ecs}
We recall that  $ \mathbb S^{1}$ stands for  the unit circle in the complex plane. We say that $h: {\mathcal A}^*\rightarrow \mathbb S^{1}$ is a {\em morphism}   if $h(w_1w_2)=h(w_1)h(w_2)$ whenever $w_1$, $w_2$ and $w_1w_2$ belong to ${\mathcal A}^*$.

Host \cite{Host:1986} shows that for  the class of primitive substitution shifts, every measurable eigenvalue is continuous; we summarise his approach below, which is based on the notion of   coboundaries.
 
 \begin{definition}[Substitutive  coboundary] \label{subst-coboundary-combinatorial-definition}
 Let $\sigma$  be a substitution on  a  finite  alphabet ${\mathcal A}$.
A morphism $h: {\mathcal A}^*  \rightarrow \mathbb S^{1}$ is a {\em  coboundary} for $\sigma$ if,  for any $a\in \mathcal A$, $h(w)=1$ whenever $w$ is a return word to $a$.
\end{definition}
The  coboundary $h$ is said to be {\em trivial} if  $h(a)=1$ for each  $a\in\mathcal A$.

The next lemma  revisits Definition \ref{subst-coboundary-combinatorial-definition} in terms of two-letter words.

\begin{lemma} \cite{Host:1986}\label{f-h-lemma-one}
Let  $(\tilde{X}_{\sigma},T)$ be a  one-sided substitution shift, $\tilde{X}_\sigma\subset \mathcal A^{\N}$,  with $\sigma$ primitive.  A morphism $h$ is a  coboundary on $\mathcal L_\sigma$ if and only if
  there exists a function
 $\bar{f}: \mathcal A\rightarrow \mathbb S^{1} $ such that
 \begin{equation}\label{f-h-one}
   \mbox{ for every two-letter  word  $ab\in\mathcal L_\sigma$,    one has  $\bar{f}(b)=\bar{f}(a) h(a)$}.
  \end{equation} 
  \end{lemma}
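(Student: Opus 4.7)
The plan is to establish both directions, using primitivity of $\sigma$ (which makes $(X_\sigma,T)$ minimal, so every letter of $\mathcal{A}$ and every word of $\mathcal{L}_\sigma$ occurs in every point) together with the morphism property of $h$.

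For the forward implication, suppose such an $\overline{f}$ exists. Fix a letter $a$ and a return word $w=w_0w_1\cdots w_{\ell-1}$ to $a$, so that $w_0=a$ and $wa\in\mathcal{L}_\sigma$. Every consecutive pair $w_iw_{i+1}$ as well as $w_{\ell-1}a$ is a subword of $wa$, hence lies in $\mathcal{L}_\sigma$, so the hypothesis applies at each step. Iterating the relation $\overline{f}(w_{i+1})=\overline{f}(w_i)h(w_i)$ along the path $w_0,w_1,\dots,w_{\ell-1},a$ and collapsing the product via the morphism property of $h$ (every prefix of $w$ is a subword of $wa$ and therefore in $\mathcal{L}_\sigma$) yields
\[
\overline{f}(a)=\overline{f}(w_0)\,h(w_0w_1\cdots w_{\ell-1})=\overline{f}(a)\,h(w).
\]
Since $\overline{f}(a)\in S^1$, this forces $h(w)=1$, so $h$ is a coboundary.

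For the reverse implication, fix any point $x\in X_\sigma$ and declare $\overline{f}(x_0):=1$. For each letter $b\in\mathcal{A}$, choose, by minimality, some $n\ge 0$ with $x_n=b$ and set $\overline{f}(b):=h(x_0x_1\cdots x_{n-1})$ (with the empty-product convention when $n=0$). The crux of the argument is well-definedness. If $x_n=x_m=b$ with $n<m$, let $n=i_0<i_1<\cdots<i_s=m$ be the successive occurrences of $b$ in the subword $x_n\cdots x_m\in\mathcal{L}_\sigma$. Then the truncation decomposes as
\[
x_n\cdots x_{m-1}=w^{(1)}w^{(2)}\cdots w^{(s)},\qquad w^{(j)}:=x_{i_{j-1}}\cdots x_{i_j-1},
\]
and each $w^{(j)}$ is a return word to $b$. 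Since every prefix of $x_n\cdots x_{m-1}$ is a subword of $x$ and hence in $\mathcal{L}_\sigma$, the morphism property applies iteratively and the coboundary hypothesis gives $h(x_n\cdots x_{m-1})=\prod_{j=1}^{s}h(w^{(j)})=1$. Consequently $h(x_0\cdots x_{m-1})=h(x_0\cdots x_{n-1})$, so $\overline{f}(b)$ is independent of the chosen occurrence.

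It remains to verify (\ref{f-h-one}). Given $ab\in\mathcal{L}_\sigma$, minimality of $(X_\sigma,T)$ guarantees an $n\ge 0$ with $x_n=a$ and $x_{n+1}=b$, and one application of the morphism property gives
\[
\overline{f}(b)=h(x_0\cdots x_n)=h(x_0\cdots x_{n-1})\,h(a)=\overline{f}(a)\,h(a),
\]
as required. The only genuine obstacle is the well-definedness step above, where the decomposition into return words and the coboundary hypothesis must be combined; the remainder is routine bookkeeping with the morphism property and minimality.
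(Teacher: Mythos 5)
Your proof is correct and takes essentially the same route as the paper: the forward direction telescopes the relation $\overline{f}(w_{i+1})=\overline{f}(w_i)h(w_i)$ along a return word, and the converse defines $\overline{f}$ via the products $h(x_0\cdots x_{n-1})$ along the orbit of a single point, invoking minimality so that every letter and every two-letter word of $\mathcal L_\sigma$ occurs there. The only (welcome) difference is that you make explicit the well-definedness step—decomposing the block between two occurrences of a letter into consecutive return words and using the morphism property—where the paper simply asserts that the map is well defined.
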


\begin{proof} 
  Suppose first that $h:\mathcal A\rightarrow {\mathbb S}^1$ is a morphism and $\bar{f}:\mathcal A \rightarrow {\mathbb S}^1$ is a function where \eqref{f-h-one} is satisfied. Then for any letter $a$, if $au_1 \dots u_{n-1} u_n$ is a return word to $a$, we have
  \begin{align*}
 \bar{f}(a)&=   \bar{f}(u_n)   h(u_n)\\
 &=  \bar{f}(u_{n-1})   h(u_{n-1}) h(u_n)\\
 &\vdots \\
 &=  \bar{f}(a) h(a) h(u_1)\dots   h(u_{n-1}) h(u_n),
  \end{align*}
  so that $h(au_1\dots u_{n-1}u_n) =1 $, i.e., $h$ is a one-sided coboundary.  
Conversely, suppose that $h$ is a one-sided coboundary.
  Since $(\tilde{X}_{\sigma},T)$ is minimal, we have $\mathcal L_\sigma=\mathcal L_u$ for any  $u\in \tilde{X}_{\sigma}$, so fix such a $u$.  By assumption, for any two indices $m < n$ such that
 $u_n=u_m$, we have  $h(u_m) \cdots h(u_{n-1})=1$.  
We  define a function  $ g\colon {\mathbb Z} \rightarrow  {\mathbb S^{1}} $  as 
\begin{align*}
g(k):=
        \begin{cases}
         h(u_0)\cdots h(u_{k-1}) & \mbox{  if $ k > 0$} \\ 
        1  & \mbox{  if $ k = 0$}   \\
           h(u_{k})^{-1 }\cdots h(u_{-1})^{-1}  &\mbox{ if $k < 0$}.   \\
        \end{cases}
\end{align*}
   We then define $\bar{f}$ as $\bar{f}(a):=g(k)$  if  $k$ is such that 
   $u_k=a$.  One checks that the map $\bar{f}$ is well defined and satisfies \eqref{f-h-one}. Note that the map $\bar{f}$ depends on the choice of $u_0$, with 
$ \bar{f}(u_0)=h(u_0)$.
\end{proof}

\begin{remark}
Note that if,  for some $a\in \mathcal A$, $aa\in \mathcal L_\sigma$, then $h(a)\bar{f}(a)=\bar{f}(a)$, so $h(a)=1$. In particular, for any non-trivial substitution shift on a two-letter alphabet, any coboundary $h$ satisfies  $h\equiv1$.
\end{remark}

We now  recall  the  seminal result by  Host  originally stated in \cite{Host:1986} for primitive substitutions.
It  is phrased in terms of  one-sided shifts and
we state it only for strongly  straight substitutions $\sigma$, as any primitive  substitution has a power which is strongly  straight. Recognizability of  $X_\sigma$ is a key requirement. Note that Host assumed that $\sigma$ is injective on letters, but this may be relaxed. For, if $\sigma$ is not injective on letters, we can introduce an equivalence relation $\sim$ on $\mathcal A$ where $a\sim b$ if and only if $\sigma(a) = \sigma(b)$.  We then work with   $\tilde{\sigma}$ defined on $\mathcal A/\sim$,  and if $\sigma$  is recognizable then so is  $\tilde\sigma$. Finally  $(X_\sigma,T)$ and $(X_{\tilde\sigma},T)$  are topologically conjugate. See \cite{Blanchard-Durand-Maass} for details. Recall that  a primitive substitution shift is uniquely ergodic; we use $\mu$ to denote the unique invariant measure of  such a  shift.

\begin{theorem}[\cite{Host:1986}]\label{Host}
Let $\sigma$ be a primitive substitution on the alphabet $\mathcal A$ which is strongly straight.  Suppose that the one-sided shift $(\tilde{X}_\sigma, T,\mu)$ is recognizable.
Let $ h_n(a)= |\sigma^n( a)|$ for all $n$ and all $a\in \mathcal A$. 
If for each $a\in \mathcal A$ the limit \begin{equation}\label{condition1}h(a):=\lim_{n\rightarrow \infty}\lambda^{h_n(a)}\end{equation} exists and defines a  coboundary $h$, then $\lambda$ is a continuous  (and hence measurable) eigenvalue of $(\tilde{X}_\sigma, T)$. Conversely, if  $\lambda \in {\mathbb S}^1$ is a measurable eigenvalue of $(\tilde{X}_\sigma, T,\mu)$, then it  also satisfies \eqref{condition1} for some  coboundary $h$. 
\end{theorem}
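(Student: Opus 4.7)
The plan is to prove the two implications separately, working throughout with the canonical fixed point $u=\lim_n\sigma^n(u_0)$ guaranteed by the strongly prefix-straight hypothesis; by minimality, the orbit $\{T^k u : k\in\mathbb N\}$ is dense in $\tilde X_\sigma$.

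For the forward direction, suppose the limits $h(a)=\lim_n\lambda^{h_n(a)}$ exist and define a coboundary. By Lemma \ref{f-h-lemma-one}, fix $\bar f\colon\mathcal A\to S^1$ with $\bar f(b)=\bar f(a)h(a)$ for every $ab\in\mathcal L_\sigma$, and set
\[
f(T^k u):=\lambda^k\bar f(u_0)
\]
on the orbit of $u$. This automatically satisfies $f\circ T=\lambda f$; the real content is to prove uniform continuity, which then lets one extend $f$ continuously to all of $\tilde X_\sigma$. Given two orbit points $T^k u,T^{k'}u$ that agree on a long central window, I would apply recognizability iteratively to decompose the common central word into $\sigma^n$-images of letters, thereby writing the ratio $f(T^k u)/f(T^{k'}u)$ as a telescoping product whose factors are of the form $\lambda^{h_n(a)}$ for varying letters $a$ and bounded levels $n$. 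Each such factor lies within $\max_a|\lambda^{h_n(a)}-h(a)|$ of its coboundary limit $h(a)$, and the coboundary relation makes the limiting product collapse to $1$.

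The main obstacle, which is the heart of Host's argument, is to upgrade the plain convergence in \eqref{condition1} to geometric convergence: $|\lambda^{h_n(a)}-h(a)|\leq C\theta^n$ for some $\theta<1$. I would derive this from the identity
\[
\lambda^{h_{n+1}(a)}=\prod_{j=1}^{|\sigma(a)|}\lambda^{h_n((\sigma(a))_j)},
\]
which, upon taking arguments modulo $2\pi$, gives a linear recurrence for the error vector $\varepsilon_n$ governed by (the transpose of) $M_\sigma$. The coboundary relations show that $\varepsilon_n$ lives in the subspace killed by the linear functionals that read off return-word weights, and primitivity forces the induced operator to be a strict contraction on that subspace. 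Once the geometric bound is in hand, the telescoped product above converges at a summable rate and uniform continuity of $f$ on the orbit of $u$ follows.

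For the converse, let $f\colon\tilde X_\sigma\to S^1$ be a measurable eigenfunction with $f\circ T=\lambda f$. Using the generating tower partitions $(\mathcal Q_n)$ of \eqref{eq:partisadiconesided}, which generate in measure by Lemma \ref{partition-generates-mble}, I would introduce the normalised averages $\bar f_n(a):=\mu(B_n(a))^{-1}\int_{B_n(a)}f\,d\mu$. The eigenequation forces $f$ to equal $\lambda^k\bar f_n(a)$ on the $k$-th level of $\mathcal T_n(a)$ up to an $L^2$ error tending to $0$ as $n\to\infty$. Comparing how a tower $\mathcal T_{n+1}(b)$ decomposes into sub-towers at level $n$ through the letters of $\sigma(b)$, one reads off consistency relations that force both $\bar f_n(a)\to\bar f(a)\in S^1$ and $\lambda^{h_n(a)}\to h(a)\in S^1$ with $\bar f(b)=\bar f(a)h(a)$ on $\mathcal L_\sigma$. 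By Lemma \ref{f-h-lemma-one}, $h$ is a coboundary, which verifies \eqref{condition1} and, combined with the forward direction, also upgrades $\lambda$ to a continuous eigenvalue.
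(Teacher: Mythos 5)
Your overall architecture is the same as the paper's sketch of Host's argument (the paper itself only sketches this theorem, citing \cite{Host:1986}): forward direction by defining $f$ on the dense orbit of a fixed point via $\bar f$ from Lemma \ref{f-h-lemma-one} and upgrading \eqref{condition1} to geometric convergence; converse by conditional expectations on the tower partitions $(\mathcal Q_n)$. However, two of your key steps do not work as stated. First, the mechanism you give for geometric convergence is wrong: primitivity does \emph{not} force the transpose of $M_\sigma$ to be a strict contraction on any combinatorially defined subspace -- a primitive, non-Pisot substitution has an incidence matrix with further eigenvalues of modulus $\geq 1$, and nothing in the ``return-word functionals'' prevents the error vector from having components in those directions a priori. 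The correct argument (Host's Lemma 1, generalised here as Lemma \ref{lem:eigenvectorbis}) is that the error vector $\varepsilon_n$ eventually satisfies the \emph{exact} recurrence $\varepsilon_{n+1}={}^t M_\sigma\,\varepsilon_n$ (once the integer discrepancies, which tend to $0$, vanish), and then the hypothesis $\varepsilon_n\to 0$ itself forces $\varepsilon_N$ into the contracting generalised eigenspace of ${}^t M_\sigma$; geometric decay comes from the spectral radius of the restriction to that subspace being $<1$, not from primitivity nor from the coboundary relations.

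Second, your uniform-continuity argument on the orbit is too lossy. If $T^k u$ and $T^{k'}u$ agree on a long prefix, recognizability gives $k'-k=h_n(w)$ where $w$ is a concatenation of return words to a letter, but the length $|w|$ is \emph{not} controlled by the length of the shared window; a single-level, letter-by-letter telescoping yields only $|\lambda^{h_n(w)}-1|\leq |w|\max_b|\lambda^{h_n(b)}-h(b)|$, which need not be small. One must compare scale by scale, exactly as in the paper's Theorem \ref{thm:sufficient-continuous-A}: define $f_n$ constant on the levels of the $n$-towers, bound $\|f_{n+1}-f_n\|_\infty$ by at most $\max_a|\sigma(a)|$ terms of the form $|\lambda^{h_n(b)}-h(b)|$ plus a coboundary term that vanishes, and sum the resulting geometric series to get a continuous eigenfunction. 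With these two repairs your forward direction becomes Host's proof. Your converse sketch is consistent with the paper's (and Host's) route, though note that passing from the $L^2$ martingale estimates to the pointwise relations on the $\beta_{n,a}$, and to $|\beta_{n,a}|\to 1$, uses the lower bound $\mu(\mathcal T_n(a))\geq\delta$ (automatic from primitivity); these ``more delicate'' details are the content worked out in Section \ref{measurable}.
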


We   give an  intuition for the proof of  Theorem \ref{Host} in  the continuous  case.
If $f$ is a continuous eigenfunction for  the eigenvalue $\lambda$, taking values in $\mathbb S^1$, then the assumption that $\sigma$ is strongly  straight allows us to 
define $\bar f:\mathcal A\rightarrow \mathbb S^1$ by $\bar f(a):=f(u)$, where $u$ is the one-sided fixed point such that  $\sigma^n(a)\rightarrow u$.  
If $ab\in \mathcal L_\sigma$ is such that $u\in[ab]$ (where here it can happen that $a=b$), then by
continuity of the eigenfunction $f$, one gets    \[\bar f(a)\lambda^{h_n(a)}\rightarrow \bar f(b),\]
which implies the existence of $  \lim_n \lambda^{h_n(a)}$;  this yields   a well-defined function $h:\mathcal A\rightarrow \mathbb S^{1}, \ a \mapsto \lim_n \lambda^{h_n(a)}$.
 Further, if  $a b_1 \dots b_k\in \mathcal L_\sigma$ is a   return word to $a$,
  then
   since $T^{h_n(w)}u \rightarrow u$, we have
 \[\lambda^{h_n(a)+ h_n(b_1) + \dots + h_n(b_k)
  }\rightarrow 1, \mbox{ i.e., } h(a)h(b_1) \dots h(b_k)=1.\]
  This kind of argument leads to the definition of a coboundary.

Conversely, given a  coboundary $h$ and a function $\bar f$ guaranteed by Lemma \ref{f-h-lemma-one},
we can use $\bar f$ to  define a map  $f$ on the shift orbit of a fixed point $u$, as $f(T^n u):=\lambda^n \bar f(u_0)$.
Note that   if $u$ has a dense orbit, the definition of $f$
 by $f(T^nu) = \lambda$  completely determines the continuous function, if it extends by continuity.  From the fact that $M_\sigma^n/\lambda^n$ converges geometrically to the projection on the Perron-Frobenius eigenspace defined by $\lambda$, we can deduce that the convergence  of \eqref{condition1}  is fast enough, so  that we can extend $f$ to a continuous eigenfunction on $X_\sigma$. For details, see \cite [Lemma 5, Proposition 1]{Host:1986}.

 To complete the proof of the theorem, Host shows  that any measurable eigenvalue must also define a coboundary.   Suppose that $f\in L^2(X_\sigma,T,\mu)$ is a measurable eigenfunction for  the eigenvalue $\lambda$. Let $B_n(a)= \sigma_{[0,n)}([a])$ be the base of the $\sigma^n$-tower defined by the letter $a$. The restriction $f|_{\mathcal T_n(a)}$  of $f$ to 
 this tower is completely determined by $
f|_{B_n(a)}$.
 Let $\beta_{n,a}:=\E_n(f)|_{B_n(a)}$ be the value  that the  conditional expectation of $f$, conditioned on the partition ${\mathcal Q}_n$, assumes on $B_n(a)$. Then if $ab\in \mathcal L_\sigma$, once again we should have
\begin{equation*}\label{approximate-coboundary}\beta_{n,a}\lambda^{h_n(a)}\approx \beta_{n,b}, \end{equation*}
and 
\begin{equation*}\lambda^{h_n(a) + h_n(b_1) + \dots + h_n(b_k)}\approx 1\end{equation*}
for large $n$, whenever $ab_1\dots b_k$ is a return word to $a$. The arguments are more delicate, but here again it can be shown that the heights of the towers end up defining a coboundary. We revisit this in more detail in Section \ref{measurable}.

The definition of a coboundary is purely combinatorial. Let us see  that more  is needed in order to get eigenvalues with   the  next example. \begin{example}\label{example 1}
There  exists $h$  which satisfies the conditions of Definition \ref{subst-coboundary-combinatorial-definition} without defining an eigenfunction. In particular, for a coboundary to define an eigenvalue, Equation \eqref{condition1} must also be satisfied.

     Consider the strongly   straight substitution $\sigma$ defined over $\{a,b,c\}$ by   
$$
\begin{array}{c c l}
a & \mapsto & abc \\
b & \mapsto & aabc \\
c & \mapsto  & aaa.\\
\end{array}
$$

  For this substitution, the words of length two that belong to $\mathcal L_{\sigma}$ are
     $\{aa,ab, bc, ca\}$. 
     Setting $h(a)=1$ and   $h(b)=h(c)=-1$ and  defines a coboundary,  with  the  corresponding $\bar{f}$  being  $\bar{f}(a)=\bar{f}(b)=1$ and  $\bar{f}(c)=-1$. 
         However, since 
     $h_{n+1}(c)=3h_n(a)$, there is no value of $\lambda$ such that \eqref{condition1} is true for this function $h$, as otherwise  it  would imply that
     \[-1=h(c)= \lim_{n\rightarrow \infty}\lambda^{h_{n+1}(c)} =  \lim_{n\rightarrow \infty}\lambda^{3h_{n}(a)} =  ( \lim_{n\rightarrow \infty}\lambda^{h_{n}(a)})^3 =h(a)^3  =1,  \]
     a contradiction.

     \end{example}
    
  \begin{remark}\label{rem:extra-coboundary-def}
     The   reason why     $h$ does not define an eigenvalue in Example \ref{example 1} is that $\bar{f}:\mathcal A\rightarrow {\mathbb S}^1$ is non-constant whereas $\sigma$ admits only one fixed point.  Indeed, the map  $\bar{f}$ encodes what values an eigenfunction $f$ can take on the fixed points for $\sigma$, namely, $\bar{f}(a)$ should be the value that $f$ assigns to the fixed point defined by  the letter $a$. Since $\sigma$ only has one fixed point, $\bar{f}$ must be constant. This example shows that for a coboundary  $h$ to define an eigenvalue, it must be accompanied by the appropriate $\bar{f}$. This explains why, in Definition
     \ref{def:Sadic-coboundary-one-sided} below, we define a coboundary to be a pair $(h,\bar{f})$ such that $\bar{f}$  takes the same values for letters that define the same limit words.   
           
 \end{remark}

 The following lemma tells us that $\bar{f}$ must take the same value on letters that define the same limit words.
\begin{lemma}\label{lem:coboundary-function}
 Let $\sigma$ be a   strongly  straight substitution on the alphabet $\mathcal A$.  Suppose that the one-sided shift $(\tilde{X}_\sigma, T,\mu)$ is recognizable, and that 
$h$ is a coboundary which satisfies \eqref{condition1}, with associated $\bar{f}$. If ${\boldsymbol a} ={\boldsymbol b}$,   then $\bar{f}(a)=\bar{f}(b)$. 
\end{lemma}

 \begin{proof}  
 Suppose that $a,b\in \mathcal{A}$ satisfy $\boldsymbol{a}=\boldsymbol{b}$. Primitivity implies the existence of a word $w$ such that $awb\in \mathcal L_{\sigma}$. Since $h$ is a coboundary, one has on the one hand $\bar{f}(b)=\bar{f}(a)h(aw)$. On the other hand, Host's theorem tells us that the $\lambda$ from  \eqref{condition1} is a continuous eigenvalue. Let $f$ be the continuous eigenfunction associated to $\lambda$. Now
$\sigma^n(aw)\sigma^n(b)\in \mathcal L_\sigma$, then by continuity of $f$
 $\lim_{n\rightarrow \infty}f( T^{\sigma^n(aw)}   {\boldsymbol a}  ) = f({\boldsymbol a}),$ so that $\lambda^{ | \sigma^n(aw)  |  }\rightarrow 1$.  Since  $h$ satisfies \eqref{condition1}, we have  $h(aw)=1$ and thus $\bar{f}(b)=\bar{f}(a)$. \end{proof}

\subsection{Height} \label{subsec:height} 
Host's definition of   a coboundary is motivated by the combinatorial definition  of  height for a constant-length substitution,
a definition  due to Kamae \cite{Kamae:1972} and Dekking \cite{Dekking:1977}. Its strength is that it allows  a  complete description of the 
set of eigenvalues  as $\{ j/q^n\tilde{h}: j\in \Z, n\in \N\}$  when $\tilde{h}$ stands for the height (see Corollary \ref{cor:Cobham0} below).
We assume below that  the substitution $\sigma$ has a fixed point, as otherwise we consider a power of $\sigma$.

 \begin{definition}[Height]\label{def:height-substitution}  Let  $\sigma$ be a primitive constant-length substitution with length $q$,  and let  $u$  be  any one-sided fixed point for $\sigma$.  The {\em height $\tilde h$ } of  $\sigma$ is defined as
  \begin{align*} \label{height}
\tilde h&:= \max \{n\geq 1: \gcd(n,q)=1, n | \gcd\{k: u_k=u_0 \} \}\\
&= \gcd\{ |w|: w \mbox{ is a return word to $u_0$, and $|w|$ is coprime to $q$}\}.
\end{align*}
   \end{definition} 
   For the equivalence between the two   formulations above in the definition, see, e.g., \cite[Section 6.1.1]{Queffelec:10}.  
   
  Equation \eqref{condition1}
 tells us that 
  for a constant-length substitution,  a coboundary $h$ must be constant. The   substitution $\sigma$   has non-trivial height $\tilde{h} \neq 1$   if and only if  it defines a non-trivial coboundary   $h\neq 1$.   When the  height $\tilde{h}$  is  non-trivial,  the constant function $h\equiv  e^{2\pi i /\tilde h}$ is a non-trivial coboundary associated to an eigenvalue. See  also Corollary \ref{cor:Cobham0},  which recalls  the explicit  and classical above-mentioned relation with   eigenvalues.

Note that while a  primitive constant length substitution $\sigma$ admits  its length $q$ as   the dominant eigenvalue of its  incidence matrix $M_\sigma$, and that this eigenvalue leads to a dynamical eigenvalue $e^{2\pi i/q}$ of the shift $(X_\sigma,T)$, there exist primitive non-constant length substitutions such that the dominant eigenvalue of $M_\sigma$ is an integer, but where $(X_\sigma,T)$ is weakly mixing, i.e., it has  no dynamical eigenvalues; see e.g. \cite{ASY}.

        In this article we investigate extending Theorem \ref{Host} to  $S$-adic shifts. In particular, in  Section \ref{constant-length-S-adic}, we apply our results 
        to constant-length  $S$-adic shifts and  we define the appropriate version of  of height in this case    (see    Definition \ref{def:height-Sadic}).

\begin{remark}\label{rem:trivialc}There are  many important situations for which coboundaries  are always  trivial. 
In particular   any coboundary of  a {\em Pisot   irreducible} substitution is trivial  \cite{Barge-Kwapisz:06}; see   Section \ref{sec:Pisot} for definitions.
More generally,  and as noticed by Clemens M\"ullner in private communication,  a coboundary of 
a primitive substitution for which   the measures of the   cylinders  associated to  letters are rationally independent   is 
trivial. Also,  left  proper substitutions  have only trivial coboundaries; see Proposition  \ref{prop:proper-implies-trivial-coboundary}. 
 These situations are a manifestation   of  a so-called property of coincidence  (see  \cite{AkiBBLS} for more  on  the subject, see also Section \ref{Toeplitz-S-adic}).
\end{remark}

\section{Coboundaries for directive sequences and continuous eigenvalues}\label{sec:coboundary-directive}

In this section, we  investigate   continuous eigenvalues.
 We first  define $S$-adic coboundaries  in Section \ref{subsec:sac}; the main results  for the continuous case are then  stated  and proved in Section \ref{sec:coboundary-continuous}.
Lastly, in Section \ref{sec:Pisot} we  investigate the  relation  between    continuous eigenvalues  and measures of letters in the case of  a trivial coboundary.
 
\subsection{$S$-adic coboundaries} \label{subsec:sac}
 
We define a  coboundary as follows: 
\begin{definition}[$S$-adic coboundary]\label{def:Sadic-coboundary-one-sided} Let $\boldsymbol{\sigma}=(\sigma_n)_{n\geq 0}$ be a  straight  directive sequence  on $\mathcal A$.
A  coboundary for $\boldsymbol{\sigma}$ is  a morphism $h:\mathcal A^*\rightarrow \mathbb S^{1}$,  and a map $\bar f:
\mathcal A \rightarrow \mathbb S^{1}$ satisfying
\begin{itemize}
\item       $   \bar f(b )  =  \bar f(a )$ whenever    $\boldsymbol {  a} =\boldsymbol{ b}$, and     \item   ${\bar f(b)  =  \bar f(a ) \lim_{n_k\rightarrow \infty}h(w_{n_k})}$ 
 for any $a,b\in \mathcal A$,  and  for any sequence 
  $(w_{n_k})_{n_k}$   of  transition words from $a$ to $b$ such that $w_{n_k} \in \mathcal L_{n_k} $
 for all $k $ and  $\lim_{n_k\rightarrow \infty}h(w_{n_k})$ exists.  \end{itemize}
 If we replace the second requirement above by the  weaker  condition that 
$\bar f(b)  =  \bar f(a ) \lim_{n\rightarrow \infty}h(w_{n_k})$
for any $a,b\in \mathcal A$,  and  for any sequence 
  $(w_{n_k})_{n_k}$   of   transition words  from $a$ to $b$ of bounded length such that $w_{n_k} \in \mathcal L_{n_k} $
 for each $k $ and  $\lim_{n_k\rightarrow \infty}h(w_{n_k})$ exists, then we say that $(h,\bar f)$ is a {\em weak coboundary}.
\end{definition}

\begin{remark}\label{remark:coboundary}\hfill
\begin{enumerate}
\item
Sometimes we will simply refer to a coboundary as $h$, dropping mention of $\bar f$. In particular, when we write that
 $h$ defines a constant   coboundary, we mean that there exists a function $\bar f$ such that 
 $(h,\bar f)$ is a  coboundary, with $h$ taking constant values. Moreover, in the case where   the map  $h\equiv 1$,  the coboundary is said to be {\em trivial}.
\item
Let   $\sigma$ be a substitution that   occurs infinitely often in {$\boldsymbol \sigma$}. 
Let $a$ be a letter,    and    let $wb$ be  a prefix of $\sigma (a)$. Then  existence of a coboundary implies that $\bar f(b)= h(w)\bar f(a)$.  To see this,  suppose that $w$ starts with $w_0$, then $\bar f (b) = h(w) \bar f(w_0)$. Also, the supposed conditions imply that 
$\boldsymbol {  a} =\boldsymbol{ w_0}$. The claim follows.
\item
Note that in the substitutive case, that is when {$\boldsymbol \sigma$} is a stationary directive sequence, if $(h,\bar{f})$ satisfies Definition \ref{def:Sadic-coboundary-one-sided}, then $h$ also satisfies Definition
\ref{subst-coboundary-combinatorial-definition}.   Conversely, if $h$ is a coboundary that satisfies Definition
\ref{subst-coboundary-combinatorial-definition} and {\em also} defines a dynamical eigenvalue via \eqref{condition1}, then the function $\bar f$  guaranteed by Lemma \ref{f-h-lemma-one}  satisfies the  conditions of  Definition \ref{def:Sadic-coboundary-one-sided}.  Note that  coboundaries, as defined in Definition
\ref{subst-coboundary-combinatorial-definition},  that do not lead to eigenvalues do not have to satisfy the first condition of  Definition \ref{def:Sadic-coboundary-one-sided}; see Example \ref{example 1}. 
\item
Coboundaries are supposed to reflect eigenvalues, and, in Definition \ref{def:Sadic-coboundary-one-sided},  the function $\bar f$ is supposed to define the putative eigenfunction on one-sided limit words. This is why we impose the first requirement in Definition \ref{def:Sadic-coboundary-one-sided}.
See also  Remark \ref{rem:extra-coboundary-def}  and Lemma \ref{lem:coboundary-function}.
   \item 
Although the coboundaries that we have defined seem to be suited to the study of one-sided shifts, because they are defined in terms of one-sided limit words, nevertheless, they are sufficient to work with two-sided shifts. This is mainly due to Proposition \ref{two-one-sided-continuous}.

\item   Observe that in the substitutive case, if $h$ is a coboundary defined by \eqref{condition1}, then $h(\sigma(w))= h(w)$ for any word. 
Similarly, 
 let $\boldsymbol{\sigma}=(\sigma_n)_{n\geq 0}$ be a  primitive directive sequence  on $\mathcal A$.
Suppose that each $\sigma_n$ that appears in $\boldsymbol{\sigma} $ appears infinitely often, and that for each $a\in \mathcal A$, the limit
\begin{equation*}h(a):= \lim_{n\rightarrow \infty}\lambda^{  h_n(a)}\end{equation*}
exists. Then   $h(\sigma(a))=h(a)$ for each $\sigma$ that appears in  $\boldsymbol{\sigma} $ and each $a\in \mathcal A$. 
To see this, if $\sigma$ appears in $\boldsymbol{\sigma} $ then for some $(n_j)$, $\sigma=\sigma_{n_j}$ for each $j$. 
Recalling the notation $h_n(a):= |\sigma_{[0,n)}(a)|$, one has, for any $a\in \mathcal A$,
\[ h(\sigma(a)) = \lim_{j\rightarrow \infty} \lambda^{  h_{n_j}(\sigma (a))} =\lim_{j\rightarrow \infty}    \lambda^{ | \sigma_{[0,n_j)}(\sigma (a))|}   = \lim_{j\rightarrow \infty} \lambda^{ | \sigma_{[0,n_j +1)}(a)|}      = h(a) . \]

\end{enumerate}
\end{remark}

\subsection{$S$-adic coboundaries and continuous eigenvalues}\label{sec:coboundary-continuous}
In this section we  describe the connection between coboundaries and continuous eigenvalues. 
There are two types of statements, namely necessary conditions  and sufficient conditions  stated in terms of the existence of 
limits  of the form $ \lim_{n\rightarrow \infty}\lambda^{h_n(a)}$,  together with a coboundary condition. 
In Theorem \ref{thm:fully-essential-coboundary} we show that   the existence of a  continuous   eigenvalue  implies the  existence of a  weak coboundary, under some combinatorial condition.  For sufficient conditions, in
Theorem \ref{Host-continuous-bounded-onesided} we show that    the existence of a   coboundary associated to a  rational $\lambda$ implies the existence  of a  continuous   eigenvalue, and in
Proposition \ref{Host-continuous-bounded-rw} we provide the return word  version of Theorem \ref{Host-continuous-bounded-onesided}. Lastly, we consider   the case of a  finitary, straight, recognizable,  directive sequence, where each substitution appears infinitely often.  With these restrictions, we show in  Theorem \ref{thm:sufficient-continuous-A}   that, provided that the terms $\lambda^{h_n(a)}$ converge sufficiently fast, $\lambda$ is a continuous eigenvalue.

In the following theorem, we require that there is a fully essential word of length two. It is easy to find examples where this is satisfied, for example, we can take a directive sequence where there exists at least one substitution which appears infinitely often, so that one can telescope to a directive sequence with a fully essential word.  Also, this condition is satisfied if the substitutions in our directive sequence each have a word of length two in common in the images of letters by each substitution. However, note for instance with the case of an Arnoux-Rauzy directive sequence  on a  three-letter alphabet, Example \ref{ex:AR}, we would require   that  one of the substitutions occurs 
boundedly often,  to apply Theorem \ref{thm:fully-essential-coboundary}. By Proposition~\ref{two-one-sided-continuous}  and Remark~\ref{remark:coboundary}(v), 
the following result can be applied to either one- or two-sided shifts.

\begin{theorem}\label{thm:fully-essential-coboundary} 
Let $\boldsymbol{\sigma} = (\sigma_n)_{n\ge0}$ be a  straight  directive
sequence on $\mathcal A$.
 Suppose that for each $a\in\cA$, there exists $\ell\in\cA$ such that $a\ell$ is a fully essential word.
 If $\lambda\in   {\mathbb S}^1$ is a continuous  eigenvalue,  then 
$$
h(a):= \lim_{n\rightarrow \infty}\lambda^{h_n(a)}
$$
exists   and defines a weak coboundary for $\boldsymbol{\sigma}$.  
\end{theorem}

\begin{proof}
Suppose that $\lambda$ is a continuous eigenvalue. By Proposition \ref{two-one-sided-continuous}, it is   a one-sided  continuous eigenvalue; let $f$  be a corresponding eigenfunction for the one-sided shift ${\widetilde X}_{\boldsymbol \sigma}$; since $\lambda\in   {\mathbb S}^1$, then $|f(x)|$ is constant on any orbit. Primitivity of  $\boldsymbol{\sigma}$ implies that
 ${\widetilde X}_{\boldsymbol{\sigma}}$ is minimal, and now continuity implies that $|f|$ is constant on 
  ${\widetilde X}_{\boldsymbol \sigma}$. Without loss of generality,  we assume $|f|=1$.
Let $a\in\cA$ and let $a\ell $ be a fully essential word. Since $a\ell\in \mathcal L_{\boldsymbol{\sigma}}^{(n)}$ for each $n$, the set $\sigma_{[0,n)}([ a\ell]) \subset
{\widetilde X}_{\boldsymbol{\sigma}}$ is non-empty for each $n$, and so $\sigma_{[0,n)}([ a\ell])$ belongs to the language $\mathcal L_{\boldsymbol{\sigma}}^{(0)}$. Let $(x^{(n)})_{n\in\N}$ be a sequence of points in ${ \widetilde  X}_{\boldsymbol{\sigma}}   $  
such that $x^{(n)}\in \sigma_{[0,n)}([ a\ell])$ for each $n\in \N$. 
Note that $ x^{(n)} \rightarrow  {\boldsymbol a}$ by straightness, and $f(x^{(n)})\rightarrow f( {\boldsymbol a})$ since $f$ is continuous.
Note also that $T^{ h_n(a)    }(x^{(n)})\rightarrow {\boldsymbol \ell}$ by straightness. Since $f \circ T^{h_n(a)} (x^{(n)})= \lambda ^{h_n(a)} f(x^{(n)})$,
we obtain that
\begin{equation*}
 \lim_{n\to \infty} \lambda^{ h_n(a) } f(x^{(n)})=f({\boldsymbol \ell}),
\end{equation*}
and since $ \lim_{n\to \infty} f(x^{(n)})=f(   {\boldsymbol a }\boldsymbol     )\neq 0$, we conclude that  $h(a):= \lim_{n\rightarrow \infty} \lambda ^{h_n(a) }$ exists. Since each $a\in\cA$ admits a fully essential word of the form $a\ell$, 
$h$ is  defined on  all of  $\cA$, i.e.,  $ \lim_{n\rightarrow \infty} \lambda ^{h_n(a) }$ exists for all $a$.

Define $\bar f: \mathcal A\rightarrow \mathbb S^{1}$ to be $\bar f( a )= f(\boldsymbol { a})$. 
We  now verify that 
$h$ and $\bar f$ satisfy the conditions of  a weak coboundary in Definition \ref{def:Sadic-coboundary-one-sided}. Clearly $\bar f(b )  =  \bar f(a )$ whenever    $\boldsymbol { b} =\boldsymbol{ a} $. Suppose that   $(w_{n_k})$ is  a sequence of transition words   from $a$ to $ b$  in  $\mathcal L_{\boldsymbol{\sigma}}^{(n_k)}$, of bounded length.
  As before, we have
 \[  f(\boldsymbol {b})=f(\boldsymbol {a}) \lim_{k\rightarrow \infty} \lambda^{h_{n_k }(w_{n_k}) }.\]
 Since   by definition   $\bar f( a )= f(\boldsymbol {a})$ for each letter $a$, one has 
 \[\bar f (b) = f(  \boldsymbol { b} )= 
 f(  \boldsymbol  { a})  \lim_{k\rightarrow \infty} \lambda^{ h_{n_k} (w_{n_k}) } =  \bar f(a  ) \lim_{k \rightarrow \infty} h(w_{n_k})   .\]

  \end{proof}

As a   sufficient condition  in the rational case  for the existence of a continuous eigenvalue, we have the following.

 \begin{theorem}\label{Host-continuous-bounded-onesided}
 Let $\boldsymbol{\sigma} = (\sigma_n)_{n\ge0}$ be a  straight and {recognizable}  directive sequence  
 on $\mathcal A$. Let $\lambda \in {\mathbb S}^1$ be rational. If
$$
  {h(a)}:= \lim_{n\rightarrow \infty}\lambda^{  h_n(a)}
  $$
exists for each $a\in \mathcal A$  and if the map $h: a \mapsto h(a)$ defines a    coboundary $(h,\bar f)$  for $\boldsymbol{\sigma} $,
then $\lambda$ is a continuous eigenvalue.

\end{theorem}

\begin{proof}
Write   $\lambda = e^{\frac{2\pi i p}{q}}$ where $p$ and $q$ are non-zero coprime integers (with $q\geq 1$). Fix a   limit word $u$, and,   for $0\leq j\leq q -1$, define
\[ A_j:=\overline{\{ T^{i}(u): i\in  {\Z},\quad  i\equiv j \mod q \}}.\]
  Minimality tells us that the union of the sets $A_j$ is {$X_{\bf \sigma}$}. If we show that the sets  $A_j$ are pairwise disjoint, then this means that  the closed sets $A_j$ are also  open, i.e., $\{A_0, \dots , A_{q-1}\}$ forms a clopen partition with    $T(A_i)= T(A_{i+1\mod q})$. We will see below that this implies that this  partition defines a continuous dynamical eigenvalue.
  
  Let us first  prove that  the sets $\{ A_j: 0\leq j \leq q -1\}$ are pairwise disjoint. 
  Since $\lambda$ is a $q$-th root of unity  and $\lim  \lambda^{  h_n(a)}$ exists for all $a$,   $ \lambda^{  h_n(a)}= h(a)$  for all  $n$  large enough and each $a\in \mathcal A$.
   The map  $ a\mapsto \lambda ^{h_n(a)}$  is a morphism for all $n$. Since $h$ is also a morphism, we get that for all $w\in \cA^+$, $h(w)=\lambda^{h_n(w)}$ for all $n$ large enough.

The assumption of recognizability implies the following: for each $\varepsilon>0$ and  $n\in \mathbb N$, there exists $N_n$ such that if $w=w_{-N_n} \dots w_{N_n}\in \mathcal L_{\boldsymbol\sigma}$, then there exists $M_n\geq N_n(1-\varepsilon)$ such that $w_{-M_n} \dots w_{M_n}$  has  a unique centred representation at level $n$ for  $w_{-M_n} \dots w_{M_n} $  in $ \mathcal L_{\boldsymbol\sigma}^{(n)}$.
By a centred representation of a word at level $n$, we mean   
that there is a  (finite) word $v^{(n)}= v_{-k}^{(n)} \dots v_0^{(n)} \dots v_{\ell}^{(n)}\in \mathcal L_{\boldsymbol\sigma}^{(n)}$,
there exist a proper suffix $s$ of $\sigma_{[0,n)}(v^{(n)}_{-k})$,     a proper prefix $p$  of $\sigma_{[0,n)}(v^{(n)}_{\ell})$,  a proper prefix  $p'$ of $\sigma_{[0,n)}(v^{(n)}_{0})$,
  and a proper suffix $s'$ of $\sigma_{[0,n)}(v^{(n)}_{0})$,  where $p's'=\sigma_{[0,n)}(v^{(n)}_{0})$,
such that \[w_{[-M_n, -1]}= s\, \sigma_{[0,n)} (v^{(n)}_{-k+1} \dots v^{(n)}_{ -1})p' \mbox{ and }
w_{[0, M_n]}= s' \sigma_{[0,n)} (v^{(n)}_{1} \dots v_{\ell -1}^{(n)})\,p .\]
   By uniqueness we mean that there is a unique such pair  $(v^{(n)},|p'|)$.
 Otherwise, by compactness, we would find a point in $X_{\boldsymbol \sigma}$ with two desubstitutions at some level $n$, and this contradicts recognizability.

Now suppose that $x\in A_j\cap A_{j'}$, $j\neq j'$. This means that there exist sequences $(m_k)$ and $( m_k')$ such that $m_k\equiv j \mod q$, 
 $ m_k'\equiv j' \mod q$, $T^{m_k} u \rightarrow x$ and $T^{ m_k'} u \rightarrow x$. Without loss of generality, we can assume that $m_k<m_k'$ for each $k$. Fix $\varepsilon>0$. For each $n$,
 choose $N_n$, $M_n$, $(v^{(n)},|p'|)$ as above (here $p'=(p') ^{(n)}$).   By dropping to a subsequence if necessary, we may assume that each $v^{(n)}_0$  equals a fixed letter $b$ and each $|s|$ equals a fixed value $\ell \bmod q$.
 Next choose $k_n$ such that $ (T^{m_{k_n}}u)_{[-N_n,N_n]} =(T^{m_{k_n}'}u)_{[-N_n,N_n]}     =x_{[-N_n,N_n]}$.  
  Let $u^{(n)}\in X_{\boldsymbol \sigma} ^{(n)}$ be such that $u=\sigma_{[0,n)} ( u^{(n)}    )$.
   In $u^{(n)}$, we see two occurrences of $v^{(n)}$,
   where  the  image under  $\sigma_{[0,n)}$ of the first  occurence of $v^{(n)}$  in $u$  starts at an index congruent to $m_{k_n}-N_n-\ell$,  and the image of the second one  starts  at an  index congruent to $m'_{k_n}-N_n-\ell$;  they  are separated by a concatenation $R^{(n)}$ of return words to the letter $b$   in $u^{(n)}$.
       As $h$ is assumed to be a cocycle, we have $h_n( v^{(n)}R^{(n)})=
|\sigma_{[0,n)} (v^{(n)}R^{(n)})|   
   \rightarrow 0 \mod q$ as $n\rightarrow \infty$, and so $
|\sigma_{[0,n)} (v^{(n)}R^{(n)})|   \equiv 0 \mod q$ for $n$ large. This contradicts the fact that $|  \sigma_{[0,n)} (v^{(n)}R^{(n)}) |= m_{k_n}'-m_{k_n} \not \equiv  0 \mod q $ for all $n$.
We thus have proved that  the sets $A_j$ are disjoint.

Let $a$ be such that  $u=\bm{a}$, and set  $f(u):= \bar f(a)$, where $\bar{f}:\mathcal{A}\to {\mathbb S}^1$   is  a function associated to $h$.
We define $f(x)= f(u)\lambda ^j $ if $x\in A_j$. Since $f$ is constant on each $A_j$,    and since the   sets $\{ A_j: 0\leq j \leq q-1\}$   are open,   $f$ is continuous. Since $f(T (T^n(u)))= \lambda f(T^n(u))$ and the orbit of $u$ is dense by minimality, $f$ is an eigenfunction for $\lambda$.

 \end{proof}

We will  need  in Section 
  \ref{subsec:clmeasurable} a slightly modified version of Theorem \ref{Host-continuous-bounded-onesided}, stated in terms of return words, which  avoids the use of  coboundaries
 (in the flavour of  \cite{Ferenczi-Mauduit-Nogueira}).
We use  below the notation from (\ref{eq:noth}) for $h_{n_k}$. The proof of the following proposition is contained in that of Theorem \ref{Host-continuous-bounded-onesided}.

\begin{proposition}\label{Host-continuous-bounded-rw}
Let $\boldsymbol{\sigma} = (\sigma_n)_{n\ge0}$ be a  straight  recognizable 
 sequence of 
 substitutions on $\mathcal A$. If $\lambda$ is rational and 
 $$ \lim_{k\rightarrow \infty}\lambda^{  h_{n_k}(w_{n_k})}=1$$
whenever $(w_{n_k})$ is a sequence of   return words to some $a\in \cA$,  with  $w_{n_k} \in \mathcal L_{n_k}$, then $\lambda$ is a continuous eigenvalue.
  \end{proposition}

We next  consider   the analogue of \cite[Proposition 7]{CDHM:2003},  namely Theorem \ref{thm:sufficient-continuous-A}. 
  Note that  \cite[Proposition 7]{CDHM:2003} makes no mentions of coboundaries whilst Theorem  \ref{thm:sufficient-continuous-A}  does; this is because 
    the assumption that an $S$-adic representation is left-proper simplifies some issues, as the next proposition shows,  namely,  that all coboundaries are trivial for 
    left-proper substitutions. We state and prove this theorem for left-proper substitutions, but we note that there is an  analogous statement and proof, for 
     right-proper substitutions.
        
        \begin{proposition}\label{prop:proper-implies-trivial-coboundary}
       Suppose that the   primitive directive sequence ${\boldsymbol \sigma}$  consists of left-proper substitutions. If $\lambda$ is a continuous eigenvalue of $(X_{\boldsymbol \sigma}, T)$, then  it must satisfy
$$\lim_n \lambda^{h_n(a)}=1$$ for each letter $a$.
    \end{proposition}
    \begin{proof}
    By Proposition \ref{two-one-sided-continuous}, we need only work with the one-sided shift $(\tilde{X}_{\boldsymbol \sigma}, T)$.
    The assumption that each $\sigma_n$ is left-proper means that for each $n$ there is a letter $a_n$ such that $\sigma_n(b)$ starts with $a_n$ for each letter $b$.  This implies  that the sequence of words
    $(\sigma_{[0,n)}( a_{n}))$ is nested and converges to 
     the unique right-infinite limit word $u$ for the one-sided shift  $\tilde{X}_{\boldsymbol \sigma}$. Furthermore, for any letter $b$ and any $n>1$,  $\sigma_{[0,n)}(b)$ starts with $ \sigma_{[0,n-1)}(a_{n-1}) $. For each $b\in \cA$, let $(x^{(n)})_{n\in \N}$ be a sequence of points in $\tilde{X}_{\boldsymbol \sigma}$ such that $x^{(n)}\in \sigma_{[0,n)}([b])$ for all $n$, and hence 
     $x^{(n)}$ starts with $ \sigma_{[0,n)}(b )\sigma_{[0,n-1)}(a_{n-1} )$ for all $n$.
     Then $x^{(n)}\to u$ and $T^{h_n(b)}(x^{(n)})\to u$. Let $f$   be a continuous eigenfunction associated to $\lambda$.  (In particular, $f\neq 0$.) We obtain that
$$\lim_{n\to\infty} \lambda^{h_n(b)}f(x^{(n)})=f(u),$$
thus, $\lambda$ must satisfy $\lim_{n\to\infty}\lambda^{h_n(b)}=1$ for each letter $b\in\cA$.

\end{proof}

Because of Proposition \ref{prop:proper-implies-trivial-coboundary},
  the statements of  \cite[Propositions 7, 8]{CDHM:2003}
are stated in terms of convergence of the terms  $\lambda^{h_n(b)     }$ to 1; we  discuss in Section \ref{sec:Pisot}   consequences of such  a convergence property  in terms of 
the relation between   eigenvalues and  measures of one-letter cylinders; see Theorem \ref{theo:Pisot} and Lemma \ref{lem:eigenvectorbis}. In general  $\lambda^{h_n(b)     }$ converges to a term $h(b)$, for some coboundary $h$.

\begin{theorem} \label{thm:sufficient-continuous-A}
Let $\boldsymbol{\sigma} = (\sigma_n)_{n\ge0}$ be a  finitary,  straight, recognizable  directive sequence on $\mathcal A$,
 where each substitution occurs infinitely often. 
 Suppose that   $(X_{\boldsymbol{\sigma}},T)$ is
aperiodic. 
 Let $(h,\bar f)$ be a weak coboundary for $ (X_{\boldsymbol \sigma}, T)  $. Let $\lambda\in {\mathbb S}^1$. If 
 \[ \sum_{n=1}^{\infty} | \lambda^{ h_n(a) }  -h(a)  |<\infty\]
 for each $a\in \mathcal A$, then $\lambda$ is a continuous eigenvalue of $
 (X_{\boldsymbol \sigma}, T) $.
\end{theorem}
\begin{proof}

 Since  all  the substitutions appear infinitely often, all the  factors  of images of letters  are essential. Now fix a substitution  $\sigma$ that occurs   in the  directive sequence
  and fix  a  pair of  letters $a,b$ such that $b$ occurs in $\sigma(a)$. Write $\sigma(a)=wbw'$. 
The word $w$ is  an  essential    transition word from  the first letter of $w$  to $b$. This means we can apply (ii) of Remark \ref{remark:coboundary},  to write $h(w)\bar f(w_0) = \bar f(b)$.

     Pick and fix $x\in     X_{\boldsymbol\sigma}   $. We use the notation of Section \ref{sec:generating-partitions} and work with the sequence of partitions ($\mathcal{Q}_n$). For each $n$, there exist a letter $a_n  = a_n(x) $ and $j_n= j_n(x)$, with
$0\leq j_n<h_n(a_n)$ such that $x\in T^{j_n}B_n(a_n)$. In other words, $(j_n,a_n)_{n\geq 0}$ is a $(\mathcal{Q}_n)$-address for $x$. Note that this forces $a_n$ to appear in $\sigma_n(a_{n+1})$: if $\sigma_n(a_{n+1})=b_n^{(1)}\dots b_n^{(L_n)}$, then there exists $0\leq k_n<L_n$ such that $b_n^{(k_n+1)}=a_n$. If $k_n=0$, then $j_{n+1}=j_n$. If $k_n\geq 1$, then $j_{n+1}=\sum_{i=1}^{k_n} h_n(b_n^{(i)})+j_n$. See Figure \ref{fig:partitionQ}. 

  Note also that  $b_n^{(1)}\dots b_n^{(k_n)}$ is an essential  transition word from $b_n^{(1)}$ to $a_n$, from the remark above. Moreover,   since ${\boldsymbol \sigma}$ is straight, one has  ${\boldsymbol a }_{n+1}={\boldsymbol   b_n^{(1)}}$ (we use the fact that  the substitutions $\sigma_n$  occur infinitely often). This implies in particular  that 
$ \bar{f} ( b_n^{(1)})=  \bar{f} (a_{n+1})$. Since $h$ is a weak coboundary,   for each  fixed morphism $\sigma$,     by taking the limit on the subset of indices $n$ where
$\sigma$ occurs   in the directive sequence, i.e., the $n$ such that $\sigma_n=\sigma$, we have that    \[ 
\left|        h(b_n^{(1)}\dots b_n^{(k_n)}) \bar{f}(b_n^{(1)}) - \bar{f}(a_{n})     \right| \rightarrow 0 , \]
and so equals 0 since the expression is constant. We have shown that $ |h(b_n^{(1)}\dots b_n^{(k_n)}) \bar{f}(a_{n+1}) - \bar{f}(a_n)|=0$ for all $n$.

For each $n$, define $f_n(x)=\lambda^{j_n}\bar{f}(a_n)$. 

  {\bf Claim:} the sequence $(f_n)$ converges  uniformly to a continuous function $f$.
We have,  as  $\lambda$ has modulus $1$,
\[|f_{n+1}(x)-f_n(x)| = |\lambda^{j_{n+1}} \bar{f}(a_{n+1})- \lambda^{j_n} \bar{f}(a_n)| =    |\lambda^{ \sum_{i=1}^{k_n} h_{n}(b_n^{(i)}) } \bar{f}(a_{n+1})-  \bar{f}(a_n)|. \]
 Also, since  $|\bar{f}|=1$,
\begin{align*}
&|\lambda^{ \sum_{i=1}^{k_n}h_{n}(b_n^{(i)}) } \bar{f}(a_{n+1})-   \bar{f}(a_n)|
 \leq \\&
 |  \lambda^{ \sum_{i=1}^{k_n} h_{n}(b_n^{(i)}) }\bar{f}(a_{n+1}) -   
     \lambda^{ \sum_{i=1}^{k_n-1}h_{n}(b_n^{(i)})} h(b_n^{(k_n)}) \bar{f}(a_{n+1})| + |   \lambda^{ \sum_{i=1}^{k_n-1}h_{n}(b_n^{(i)})}h(b_n^{(k_n)}) \bar{f}(a_{n+1}) -   \bar{f}(a_n)| \\&=
      |  \lambda^{ h_{n}(b_n^{(k_n)}) }-   
      h(b_n^{(k_n)}) | + |   \lambda^{ \sum_{i=1}^{k_n-1}h_{n}(b_n^{(i)})}h(b_n^{(k_n)}) \bar{f}(a_{n+1}) -   \bar{f}(a_n)|, 
\end{align*}
and recursively,
\begin{align*}
|f_{n+1}(x)-f_n(x)|&= |\lambda^{ \sum_{i=1}^{k_n}h_{n}(b_n^{(i)}) } \bar{f}(a_{n+1})-   \bar{f}(a_n)|\\&  \leq \left(  \sum_{i=1}^{k_n} |\lambda^{     h_{n}(b_n^{(i)})} - h(b_n^{(i)})| \right) + |h(b_n^{(1)}\dots b_n^{(k_n)}) \bar{f}(a_{n+1}) - \bar{f}(a_n)| \\ &=   \sum_{i=1}^{k_n} |\lambda^{     h_{n}(b_n^{(i)})} - h(b_n^{(i)})| .   \end{align*}

Note that since $\boldsymbol\sigma$ takes values from a finite set, $k_n$ is uniformly bounded on $n$,  and this implies that the sum on the right hand is uniformly bounded for all $x$. Thus
  the series $\sum_{n\geq 1}||f_n-f_{n-1}||_{\infty}$ converges, and the sequence $(f_n)$ converges  uniformly to a continuous function $f$. 
  
   {\bf Claim:} $f$ is an eigenfunction for $\lambda$. 
 Note  first that the maps $x\mapsto j_n(x)$ and $x\mapsto a_n(x)$ are continuous.  For all $n$ and for all $x$ such that $0\leq j_n(x)<h_n(a_n(x))-1$, we have that $j_n(Tx)=j_n(x)+1$ and $a_n(Tx)=a_n(x)$.  Hence $f_n(Tx)=\lambda  f_n(x)$.
One has either  $0\leq j_n(x)=h_n(a_n(x))-1$ for all $n$ or  $0\leq j_n(x)<h_n(a_n(x))-1$ for $n$ large enough.
By   the same  argument as in the proof of Lemma \ref{lem:finitely many limit words} there are finitely many  points $x\in X_{\boldsymbol{\sigma}}$ such that $j_n(x)=h_n(a_n(x))-1$ for all $n$, and by aperiodicity, they are well approximated by points $y$ such that $j_n(y)<h_n(a_n(y))-1$. This proves that $f$ is an eigenfunction on a dense subset of $X_{\boldsymbol \sigma}$, from which the result follows.

\end{proof}

\subsection{Convergence, eigenvalues and  measures} \label{sec:Pisot}
We discuss in this section  the  convergence properties involved in Theorem  \ref{thm:sufficient-continuous-A}. 
We focus on the case where the incidence matrices are invertible  (i.e.,  they have non-zero determinant).  With the assumption of strong convergence discussed below, the 
$S$-adic shifts are uniquely ergodic; see e.g. \cite[Theorem 5.7]{Berthe-Delecroix}. Here, we relate   eigenvalues to the measures of  one-letter cylinders.
We recall from  Section   \ref{subsec:subs} that    additive continuous eigenvalues  form  a subset of the   group   generated by the   measures of cylinders in the uniquely ergodic case,
 i.e., $E(X,T) \subset I(X,T)$ by   \cite[Proposition 11]{CortDP:16} and   \cite[Corollary 3.7]{GHH:18}. In this section we show that the absence of nontrivial coboundaries  allows us to deduce  a relationship between the   additive continuous eigenvalues 
 and  the measures of one-letter cylinders.  By Remark  \ref{rem:trivialc}, the  restriction  to trivial coboundaries  covers a large range of cases.  Note that  for  proper  minimal unimodular  $S$-adic subshifts,   measures of one-letter cylinders  are known to generate the full set $I(X,T)$
  \cite[Theorem 4.1]{BCDLPP:19}.

We consider a directive   sequence of substitutions  $\boldsymbol{\sigma} $   with  invertible incidence matrices over a $d$-letter alphabet $\{1, \dots , d\}$.
We recall  that $M_{[0,n)}= M_0 M_{k+1} \dots M_{n-1}$  stands for the incidence matrix of  $\sigma_{[0,n)}$.
Let ${\bf 1}$ stand for the vector  in $ \Z^d$ all of whose entries  equal  $1$.
Noting that the  $a$-indexed column of $M_{[0,n)}$ sums to 
$h_{n}(a)$, the existence of the limit  $h(a):= \lim_{n\rightarrow \infty}\lambda^{h_n(a)}$, with $\lambda= {e^{2\pi i t}}$, is equivalent to 
\[\lim_{n\rightarrow \infty }t{\bf 1} M_{[0,n)} \equiv \arg \boldsymbol{h} \bmod \Z^d,\]
where the $a$-indexed entry of  $\boldsymbol{h}$ equals $h(a)$ and $\arg \boldsymbol{h}$ denotes the column vector whose $a$-th index entry equals the argument of $h(a)$  (where the argument is considered modulo $2\pi$).
We focus here on the condition $\lim_{n\rightarrow \infty }t{\bf 1} M_{[0,n)} \equiv  0 \bmod \Z^d$, which is  the case when     the  coboundary is trivial, i.e., $h\equiv 1$ and $\bar f \equiv 1$,  and 
we relate it with the notion of convergence for the products of matrices $M_{[0,n)}$. Let $\boldsymbol e_a$ denote the vector of the canonical basis associated to the letter $a$.

\begin{definition}[Strong  and exponential  convergence]
Let  $\boldsymbol u  \in \R_{\ge 0}^d $ have 
entries which sum  to $1$. We say that the directive  sequence  $\boldsymbol{\sigma} $ on  the alphabet $\{1, \dots, d\}$  is {\em strongly convergent}  with   respect  to  $\boldsymbol u  \in \R_{\ge 0}^d $ 
 if
$$
\lim_{n\to+\infty} \mathrm{dist}(M_{[0,n)}\boldsymbol e_a,\R  \boldsymbol u) = 0\quad \hbox{for all }a\in \{1, \dots, d\},$$
 where  the distance $\mathrm{dist}$ refers to the usual  distance of a point to a line, i.e.,  $\mathrm{dist}({\boldsymbol  x} ,
 {\mathbb R}{ \boldsymbol  u})$  is the infimum of the set $\{\Vert {\boldsymbol x} -  \lambda   {\boldsymbol u} \Vert_2  \mid  \lambda \in\mathbb R \}$. 
 In this case we call  $\boldsymbol u$ a   {\em generalised  right  eigenvector} for $\boldsymbol{\sigma}$.   
The vector $\boldsymbol u$ is {\em normalised}  if the sum of its entries equals $1$.
 
 We say that the  directive sequence $\boldsymbol{\sigma} $  is {\em exponentially  convergent}  with   respect to  $\boldsymbol u  \in \R_{\ge 0}^d $ 
 if there exist  $C, \alpha 
 >0$ such that $$
 \mathrm{dist}(M_{[0,n)}\boldsymbol e_a,\R\boldsymbol u)   < C e^{-\alpha n}\quad \hbox{for all } n \mbox{ and   all } a\in \{1,\ldots,d\}.
$$ 
 \end{definition} 
  Note that  strong convergence  implies that
$$\bigcap _n M_{[0,n)} {\mathbb R}^d_+={\mathbb R}^d_+ u,$$ with the latter property being called cone convergence, see for example \cite{Pytheas-2020}. Note also  that exponential convergence implies  the convergence of $\sum_n  \mathrm{dist}(M_{[0,n)}\boldsymbol e_a,\R\boldsymbol u) $ for each $a$; the latter is called sum convergence.

  A generalised  right  eigenvector for $\boldsymbol{\sigma}$ can be seen as the generalisation of the Perron--Frobenius eigenvector of a primitive matrix.

  Note that the property that $\boldsymbol{\sigma} $ is exponentially convergent with respect to a generalised eigenvector has been identified and studied in the literature. For example, see  \cite[Theorem 6.3]{Berthe-Delecroix},  and also \cite{BST:19,BST:20}.   This   property has emerged in the Pisot  case.   Let us first  recall a few definitions. A {\em Pisot number}   is an algebraic integer greater than~$1$  whose Galois conjugates are all contained in the open unit disk.  A {\em Pisot   irreducible matrix} is such that  its characteristic polynomial  is the minimal polynomial   of a Pisot  number;  such a matrix  is primitive  by \cite{CanSie:2001}.  A {\em Pisot irreducible substitution} is one whose incidence matrix is  Pisot   irreducible. A stationary directive sequence   given by a Pisot 
  irreducible substitution is  exponentially convergent  to the Perron eigenvector \cite{Host:1986}. We recall  that the only  coboundary $h$ of a  Pisot irreducible substitution is the  trivial one,   by Remark \ref{rem:trivialc}, that is, $h\equiv 1$.
Also, if the directive sequence is made of  substitutions with the same Pisot  irreducible  incidence matrix, then  the  directive sequence will converge exponentially to the Perron eigenvector.   Sets  of substitutions with the same  incidence matrix  have been studied in
  \cite{Arnoux-Mizutani-Sellami:2014}, \cite{Baake-Spindler-Strungaru:2018} (where it is called semi-compatibility), and \cite{Rust:2020} (where it is called compatibility).

 In particular, if the incidence matrix  $M_\sigma$ of a  Pisot irreducible substitution $\sigma$  has unit determinant, then the eigenvalues of $(X_\sigma,T)$ lie in the linear span of the Perron eigenvector for $M_\sigma$ \cite[Example (6.2)]{Host:1986}. In this section we show that a similar result holds if our directive sequence  is finitary, recognizable, straight, under the condition  of   strong convergence, with invertible matrices. 
 
We start with
Lemma \ref{lem:eigenvectorbis}, which  extends 
 Assertion 1 of Exercice 7.3.30 from \cite{Fog02} and   \cite[Lemma 1]{Host:1986}. Similar extensions can be found e.g. with 
\cite[Lemma 15]{Bressaud-Durand-Maass:2005}.
\begin{lemma}\label{lem:eigenvectorbis} Let
$(M _n)_n \in {\mathcal M}^{\mathbb N}$ where ${\mathcal M}$ is a finite set of $d\times d$ invertible matrices with integer entries.  Assume that $(M_n)_n$ is strongly convergent with respect to a normalized vector  $\boldsymbol u  \in  {\mathbb R}_{\ge 0}^d$.   Let $t\in {\mathbb R}$.
 Suppose that
\[\lim_{n\rightarrow \infty }t{\bf 1}  M_{[0,n)} \equiv {\bf 0} \bmod \Z^d.\]
Then  the following holds.

\begin{enumerate}
\item   The vector  $t{\boldsymbol 1}$  can be decomposed   as $t{\bf 1} = { \boldsymbol t }_1+ {\boldsymbol t}_2$ where  ${\boldsymbol t} _1 M_{[0,n)}
\rightarrow 0$ and 
$ {\boldsymbol t}_2 M_{[0,n)}
 \in \Z^d$ for all $n$ large. 
 
 \item The vector  $ { \boldsymbol t }_1$ is orthogonal to  $\boldsymbol u$.

\item   
The   number $ t$ is a rational  combination of the  entries of  $\boldsymbol u$.
 If  the matrices $M_n$ are all unimodular, then   $t$ is an integer   combination of the  entries of  $\boldsymbol u$.
 \item  For each $a$, let $ h_n(a) = (1...1)M_{[0,n)}e_a$. There exists $C>0$ such that  for each letter $a$ and each $n$, then 
 \[          |\lambda^{h_{n}(a)}   - 1|  \leq
    C    \mathrm{dist}(M_{[0,n)}\boldsymbol e_a,\R\boldsymbol u) .\]

 \end{enumerate}

\end{lemma}
\begin{remark}
Note that the condition $\lim_{n\rightarrow \infty }t{\bf 1}  M_{[0,n)} \equiv {\bf 0} \bmod \Z^d$ is equivalent to    
$ \lim_n  \lambda ^{h_n(a) }= 1 \mbox{ for all } a$ where $\lambda = \exp(2i \pi t)$. Note that Assertion (iv) above also implies that if sum convergence holds, i.e., if the series  
$\sum_n    \mathrm{dist}(M_{[0,n)}\boldsymbol e_a,\R\boldsymbol u) $  converges, then  the series  $\sum_n    |\lambda^{h_{n}(a)}   - 1|$ converges.

\end{remark}

\begin{proof}

{\bf Proof of (i)}.
 By assumption, we can write
\[
t {\boldsymbol  1}   M_{[0,n)}      = {\boldsymbol u}_n  +{\boldsymbol v}_n\]
where ${\boldsymbol u}_n\in \Z^d$ and ${\boldsymbol v}_n\rightarrow  {\bf 0}$ as $n\rightarrow \infty$.
Note that for each $n$
\begin{align*}
{\boldsymbol u}_{n+1} + {\boldsymbol v}_{n+1} &= t {\bf 1}  M_{[0,n+1)}      = \left(  {\boldsymbol u}_n  +{\boldsymbol v}_n\right)    M_{{n}} \\&    =   {\boldsymbol u}_n  M_{{n}}  +{\boldsymbol v}_n M_{{n}}  ,
\end{align*}
so 
\[ {{\boldsymbol u}_{n+1} -  {\boldsymbol u}_n            M_{{n}}              =  {\boldsymbol v}_n M_{{n}}  - {\boldsymbol v}_{n+1}} ,\]
and since  $M_{{n}}$ is one of finitely many matrices, the right hand side of this last expression converges to ${\bf 0}$, so  therefore $ {\boldsymbol u}_{n+1} -  {\boldsymbol u}_n     M_{_{n}}     \rightarrow {\bf 0}$  as $n\rightarrow \infty$.
But  $ {\boldsymbol u}_{n+1} -  {\boldsymbol u}_n   M_{{n}}  $ is an integer valued row vector. We conclude that there exists $N$ such that  \[ {\boldsymbol u}_{N+1} -  {\boldsymbol u}_N   M_{{N}}  = {\bf 0}\mbox{ and }
 {\boldsymbol u}_{N+m} -  {\boldsymbol u}_N    M_{[N,N+m)}  = {\bf 0}
 \mbox{ for all } m\geq 1.\]

 As each of the matrices $M_n$ is invertible, we can find a vector $\boldsymbol t_2$ such that $\boldsymbol t_2  M_{[0,N)} = {\boldsymbol u}_N$, and so
 \[  {\boldsymbol u}_{n} = \boldsymbol t_2
  M_{[0,n)}   
   \mbox{ for } n\geq N. \] 
 
 Write $\boldsymbol t_1 :=  t {\bf 1}  - \boldsymbol t_2$. Then,  for every $n \geq N$, one has 
 \[    \boldsymbol t_1  M_{[0,n)}       = t  {\bf 1}    M_{[0,n)}  -  \boldsymbol t_2    M_{[0,n)}  = t  {\bf 1}   M_{[0,n)} - {\boldsymbol u}_{n} = {\boldsymbol v}_{n}. \]
 But  ${\boldsymbol v}_n \rightarrow {\bf 0}$ as $n\rightarrow \infty$, so  $\boldsymbol t_1    M_{[0,n)} \rightarrow  {\bf 0}$ as $n\rightarrow \infty$.

 \bigskip
{\bf Proof of (ii)}. 
By the assumption of strong convergence, we have 
$\lim_{n\to+\infty} \mathrm{dist}(M_{[0,n)}\boldsymbol e_a,\R  \boldsymbol u) = 0$ for each $a$. In particular, cone convergence holds. By continuity of the scalar product, one has 
$\lim_n \frac{\langle {\bf  t}_1, M_{[0,n)}\boldsymbol{u}\rangle}{\vert \langle {\bf  t}_1, M_{[0,n)}\boldsymbol{u}\rangle \vert } = 
\frac{\langle {\bf  t}_1, \boldsymbol{u}\rangle}{\vert \langle {\bf  t}_1, \boldsymbol{u}\rangle\vert }$. Moreover, $\lim_n \langle {\bf  t}_1, M_{[0,n)}\boldsymbol{u}\rangle =0$ from above.
 Therefore $\langle{\bf  t}_1,{\boldsymbol u}\rangle = 0$.

\bigskip

{\bf  Proof of (iii).} 
One has   $t {\bf 1 }= {\boldsymbol t} _1 + {\boldsymbol t} _2$, 
$ t =  \left\langle   t { \bf 1}, {\boldsymbol u} \right\rangle =
\langle \boldsymbol t_1+\boldsymbol t_2, {\boldsymbol u} \rangle =
\langle \boldsymbol t_2,{\boldsymbol u}\rangle$,
and
 $ \boldsymbol t_2= {\boldsymbol u}_{N+1}  \left(   M_{[0,N+2)}           \right)^{-1}$. Here ${\boldsymbol u}_{N+1} $ has integer entries,  as well as the invertible matrices
 $M_n$, so the entries of $\boldsymbol t_2$ are also   rational numbers. Hence $t$ is a  rational combination of the entries of ${\boldsymbol u}$, and if  the matrices are unimodular, then it is even  a  linear  combination of the entries of ${\boldsymbol u}$.

\bigskip

{\bf  Proof of (iv).}   
By Part (i), $ t{\bf 1} M_{[0,n)} \equiv  {\bf t}_1  M_{[0,n)} \mod  \Z^d$  for all $n$.
Write  $\lambda^{h_{n}(a)}= \exp(2i \pi  t h_n(a))$.
For each $a$ and each $n$, one has
 $$  |\lambda^{h_{n}(a)}  -1| \leq 2 \pi  | \langle \tr{ \boldsymbol t}_1 ,M_{[0,n)}\boldsymbol e_a \rangle |  
  \mbox{ and }   |  \langle \tr{ \boldsymbol t } _{1} , M_{[0,n)}\boldsymbol e_a \rangle | \leq  \Vert { \boldsymbol t } _{1}\Vert_2 \,   \mathrm{dist}(M_{[0,n)}\boldsymbol e_a,\R\boldsymbol u)
, $$
by the Cauchy-Schwartz inequality together with the fact that $ \boldsymbol t  _{1}$ is orthogonal to ${\boldsymbol u}$ for the second inequality.
\end{proof}

 The following result   relates  eigenvalues  and measures of letter  cylinders.

\begin{theorem} \label{theo:Pisot}
Let $\boldsymbol{\sigma} = (\sigma_n)_{n\ge0}$ be a finitary, straight, recognizable directive sequence  on $\mathcal A$, 
 where each substitution occurs infinitely often, and where each incidence matrix is invertible. 
Suppose that   $(X_{\boldsymbol{\sigma}},T)$ is
   aperiodic, and that 
    $\boldsymbol{\sigma} $ is  strongly  convergent.  Let $\mu$ be the (unique) shift-invariant measure on $(X_{\boldsymbol{\sigma}}, T)$. Let $\lambda=e^{2i\pi t}$  in ${\mathbb S}^1$ such that
$\lim_n  \lambda ^{h_n(a)}=1$ for every $a \in {\mathcal A}$. 
If  
\[\sum_n    \max_a  \mathrm{dist}(M_{[0,n)}\boldsymbol e_a,\R\boldsymbol u) <\infty,\] then
$\lambda$  is a continuous eigenvalue, and $t$ is a $\mathbb Q$-linear combination of   the  measures of the letter cylinders $\mu[a]$, and  even a $\mathbb Z$-linear  combination if 
   $\boldsymbol{\sigma}$ is unimodular.     Finally, $ \mu[a]$  is a continuous eigenvalue for  every letter $a$.
\end{theorem}
 \begin{proof}
   Strong convergence, together with the assumption of having invertible matrices,  implies unique ergodicity \cite[Theorem 5.7]{Berthe-Delecroix}, and  similarly as in the substitutive case,    the  entries of the   generalised   normalised right eigenvector $\boldsymbol u$ are given by  the measures $\mu[a]$ of  one-letter cylinders.
  The fact that $\lambda$ is a continuous eigenvalue comes from    Theorem   \ref{thm:sufficient-continuous-A}, with the constant trivial coboundary  $(h,\bar f)$,
  with $h\equiv 1$ and $\bar f \equiv 1$, 
   and 
 Condition (iv) of  Lemma \ref{lem:eigenvectorbis}.  
  Lemma  \ref{lem:eigenvectorbis}
 then  implies that  $t$ is a $\mathbb Q$-linear combination of   the  measures of the letter cylinders $\mu[a]$, and if 
 $\boldsymbol{\sigma}$ is unimodular,   $t$ is a $\mathbb Z$-linear  combination. 
 
 It remains to prove that  for each letter $a$,  $t=\mu[a]$ satisfies  $\lim_n  \lambda ^{h_n(a)}=1$, where $\lambda= e^{2\pi i t}$. The  projection of $M_{[0,n)}\boldsymbol e_a$ along    the hyperplane orthogonal to the vector ${\bf 1}$ onto   $ {\mathbb R}^+\boldsymbol u$ is   $ h_n(a) \boldsymbol u$, whose $a$th entry is
 $\mu[a]h_n(a)$.  Since the matrix $M_{[0,n)}$ has integer entries, the  convergence to $0$ 
of $   \mathrm{dist}(M_{[0,n)}\boldsymbol e_a,\R\boldsymbol u) 	$  implies    the convergence of 
$\mu[a]h_n(a)$ to $0$ modulo $1$.  We  again conclude with Theorem   \ref{thm:sufficient-continuous-A},
 together with Condition (iv) of  Lemma \ref{lem:eigenvectorbis} that $\mu[a]$ is a continuous eigenvalue.
\end{proof}

 \begin{remark} We can also use the fact that 
$\sum _n \max_a  \mathrm{dist}(M_{[0,n)}\boldsymbol e_a,\R\boldsymbol u) < + \infty 	$  implies the so-called balance property under the  finitary  assumption
 (see  \cite[Theorem 5.8]{Berthe-Delecroix}). We  then again deduce that $\mu[a]$ is a continuous eigenvalue, via  \cite[Theorem 1]{BerCecchi:2018}, which relies on  the Gottshalk-Hedlund theorem.
\end{remark} 
 
 \begin{remark}
 The convergence condition  of Theorem  \ref{theo:Pisot} holds  in particular if one has   exponential convergence for the directive sequence but we can go beyond here: 
the convergence  just needs to be sufficiently fast for  the series to converge. This has to be compared  with the measurable case where the convergence is slower (see Theorem \ref{thm:sufficient-mble}). This is a classical phenomenon observed for some   proper  directive sequences; see  \cite{DFM:19} or  \cite{Cassaigne-Ferenczi-Messaoudi:08} for Arnoux-Rauzy   shifts; see also \cite[Corollary 15.57]{Nadkarni-1998}.
\end{remark}

\section{ Measurable eigenvalues}\label{measurable}

 In this section we equip an $S$-adic shift with an invariant measure and  study the existence of measurable eigenvalues. For this, Theorem \ref{thm:Bratteli-Vershik} is particularly useful, as, with the conditions that the directive sequence is recognizable and everywhere growing, it allows us to use and compare results concerning  Bratteli-Vershik systems. In particular
  we state results from  \cite{BKMS:13},  \cite{Mentzen:1991} and \cite{CDHM:2003}    which are couched in terms of Bratteli diagrams, or cutting-and-stacking transformations, but  which we rephrase   in our language. Our main result in this section is Theorem \ref{thm:sufficient-mble}. It requires a strengthening of the notion of straightness to that of strong straightness, in  Definition \ref{def:strongly-straight}. The combination of  strong primitivity and finitary ensures that our systems are of {\em exact finite rank},
a condition which has been extensively studied in the literature  \cite{Mentzen:1991, Boshernitzan-1992}, and one which is useful in the proof of  Theorem \ref{thm:sufficient-mble}. We recall the partitions $(\mathcal Q_n)$ that were defined in  Section \ref{sec:generating-partitions}.

\begin{lemma}\cite[Lemma 6.3]{BSTY}\label{partition-generates-mble}
  Let $\boldsymbol{\sigma} = (\sigma_n)_{n\ge0}$ be a recognizable and everywhere growing directive sequence on
  $\mathcal A$.
If  $\mu$ is a shift invariant probability measure on~$X_{\boldsymbol{\sigma}}$, then one has  $\mu(\bigcap_{n=0}^{\infty}   \bigcup_{a\in\mathcal A} B_n(a)) = 0$, and $(\mathcal Q_n)$ is generating in measure.
\end{lemma}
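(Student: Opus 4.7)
The plan is to reduce both assertions to a single bound on the measure of bases. First I would establish that $\mu(B_n(a)) \leq 1/h_n(a)$ for each letter $a$ and each level $n$. By recognizability of $\boldsymbol{\sigma}$, the tower $\mathcal{T}_n(a)$ decomposes as the disjoint union $\bigsqcup_{k=0}^{h_n(a)-1} T^k B_n(a)$, and $T$-invariance of $\mu$ makes all levels have equal mass, so $h_n(a)\,\mu(B_n(a)) = \mu(\mathcal{T}_n(a)) \leq 1$. Since $\boldsymbol{\sigma}$ is everywhere growing and $\mathcal{A}$ is finite, $\mu\bigl(\bigcup_{a\in\mathcal{A}} B_n(a)\bigr) \leq \sum_{a\in\mathcal{A}} 1/h_n(a) \to 0$. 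The inclusion $\bigcap_n \bigcup_a B_n(a) \subseteq \bigcup_a B_n(a)$ valid for every $n$ then yields the first assertion.

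For the generating-in-measure claim, recognizability assigns to each $x \in X_{\boldsymbol{\sigma}}$ a unique $(\mathcal{Q}_n)$-address $(k_n, a_n)_{n \geq 0}$, meaning $x_{[-k_n,\, h_n(a_n)-k_n)} = \sigma_{[0,n)}(a_n)$. For such an address to determine $x$ we need both $k_n \to \infty$ (past) and $m_n := h_n(a_n) - 1 - k_n \to \infty$ (future). Writing $\sigma_n(a_{n+1}) = b_1 \cdots b_L$ and letting $j$ denote the position of the letter $a_n$ within this word, a direct bookkeeping on the nested addresses yields $k_{n+1} = h_n(b_1 \cdots b_j) + k_n$ and $m_{n+1} = h_n(b_{j+2} \cdots b_L) + m_n$, so the sequences $(k_n)$ and $(m_n)$ are both non-decreasing in $n$.

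To finish I would show, for fixed $K \in \N$, that the event $\{x : k_n(x) \leq K\}$ equals $\bigcup_a \bigsqcup_{k=0}^{K} T^k B_n(a)$ and so has measure at most $(K+1)|\mathcal{A}|/\min_a h_n(a)$, which tends to $0$. Monotonicity of $(k_n)$ lets us realise $\{k_n \leq K \text{ for all } n\}$ as the nested intersection of these sets, whose measure is therefore $0$; a union over $K$ yields $k_n \to \infty$ almost surely. The analogous statement for $m_n$ follows by the symmetric argument applied to the tops $T^{h_n(a)-1} B_n(a)$ in place of the bases $B_n(a)$, again via $T$-invariance. I do not anticipate a substantive obstacle here; the only care point is the address recursion, which makes the monotonicity of $k_n$ and $m_n$ transparent, after which everything is a brief measure-theoretic computation.
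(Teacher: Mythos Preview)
Your argument is correct. The paper gives only a sketch, and its route is slightly different from yours: it identifies the exceptional set as the $T$-orbit of limit words, arguing that a point outside this orbit must, infinitely often, lie neither in any base $B_n(a)$ nor in any top level $T^{h_n(a)-1}B_n(a)$, so that the visible window grows. You bypass the limit-word characterisation entirely and work directly with the monotone sequences $(k_n)$ and $(m_n)$, bounding the measure of $\{k_n\le K\}$ by $(K+1)\sum_a 1/h_n(a)$ and letting $n\to\infty$. Both approaches rest on the same basic estimate $\mu(B_n(a))\le 1/h_n(a)$; your version is more self-contained and avoids having to argue separately that the orbit of limit words is $\mu$-null. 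One small correction: with $a_n=b_j$ in $\sigma_n(a_{n+1})=b_1\cdots b_L$, the recursions should read $k_{n+1}=h_n(b_1\cdots b_{j-1})+k_n$ and $m_{n+1}=h_n(b_{j+1}\cdots b_L)+m_n$; your indices are off by one, but this does not affect the monotonicity on which the proof hinges.
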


We give an indication of the proof of Lemma \ref{partition-generates-mble}. In fact, the partitions  $\mathcal{Q}_n$ determine the past of any point which is not in the $T$-orbit of a limit word. This is because for such a point $x$, we have that infinitely often $x$ belongs neither to the base   $\sigma_{[0,n)}([a])$ of any $\mathcal T_n(a)$, nor to the top level         $T^{h_n(b)-1} \sigma_{[0,n)}([b])$ of any $\mathcal T_n(b)$. Therefore, if $n$ is such that $x\not \in \bigcup_{a\in \mathcal A}\sigma_{[0,n)}([a])\cup \bigcup_{b\in \mathcal A}  T^{h_n(b)-1} \sigma_{[0,n)}([b]) $, then we have complete knowledge of 
$x_{[-k_n,k_n)}$, where $k_n= \min_{a\in \mathcal A}h_n(a)$. We then  let  $n$ become large and use the hypothesis that $\boldsymbol{\sigma}   $ is everywhere growing.

 \bigskip
 
We now  introduce the following notion, which is key to the proof of Theorem \ref{thm:sufficient-mble}.

\begin{definition}[$S$-adic exact finite rank]
We say that the directive sequence  ${\boldsymbol \sigma}$ is of  {\em exact finite
rank} if it is primitive and recognizable,   and if there is an  invariant probability measure $\mu$ on $X_{\boldsymbol \sigma}$ and a constant $\delta>0$ such that  $\mu(  \mathcal T_n( a))  \geq \delta$ for all $n\geq 0$ and all $a \in \mathcal A$.  
\end{definition} 
We keep in mind that $\mu(  \mathcal T_n( a)) =h_n(a) \mu(B_n(a))$. Thus if the heights of the towers  $\mathcal T_n( a)$ and  $\mathcal T_n( b)$ are comparable in an exact finite rank system,  then so is the $\mu$-mass of their bases. Note also  that   the vector with entries $(\mu(B_n(a)))_a$ is equal to  the generalised  normalised right  eigenvector   of  the sequence of matrices 
$(M_{k})_{ k \geq n}$,  i.e.,  $ \mu([a])_a= M_{[0,n)} (\mu(B_n(a)))_a $ for a finitary,  primitive and strongly convergent  directive sequence made of invertible matrices  (see e.g. \cite[Section 6.8.1]{CANT} and \cite[Theorem 3.10]{Berthe-Delecroix} for the existence of frequencies under these assumptions). Indeed  one has $\mu(B_n(a))= (M_{n} \mu(B_{n+1}(b))_b)_a$.

Even though the 
notion of exact finite rank is essentially one that describes the given dynamical system,  nevertheless whether it holds  is a function of the given $S$-adic representation of the system, for, as discussed in 
\cite[Remark 6.9]{BKMS:13}, a dynamical system may have two $S$-adic representations, only one of which is of  exact finite rank.
Boshernitzan showed that dynamical systems which have an exact finite rank representation are uniquely ergodic by   \cite[Theorem 1.2]{Boshernitzan-1992}, so we will denote the unique invariant measure by  $\mu$.

 Finally we repeat here a condition,  \cite[Proposition 5.7]{BKMS:13}, which identifies a large family of exact finite rank directive sequences. Note that most of the results in \cite{BKMS:13} concerning exactness do not depend on the ordering of the Bratteli diagram, i.e., there is no requirement that the substitutions in the directive sequence be proper.

 \begin{proposition}\label{prop:exact}
Let ${\boldsymbol \sigma}$ be a recognizable directive sequence   on $\mathcal A$, where the incidence matrix  $M_{\sigma_n} =\left(    m_{ab}^{(n)}\right)_{a,b}$ of $\sigma_n$ is positive for each $n$. If there is a constant $c>0$ such that 
\[ \frac{\min_{a,b} m_{ab}^{(n)}}{ \max_{a,b} m_{ab}^{(n)} }\geq c\]
for each $n$, then ${\boldsymbol \sigma}$ is of exact finite rank, and  $ h_{n}(a)/   h_{n}(b)\geq c$ for each $n$ and pair of letters $a,b$.
\end{proposition}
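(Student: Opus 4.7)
My plan is to prove both conclusions by applying the same short computation twice, to the column sums of the incidence matrices and to the measures of the tower bases. The hypothesis gives directly, for every $n$ and all letters $i, a, b$, the entrywise comparison $m_{ia}^{(n)} \geq c\, m_{ib}^{(n)}$.

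For the height ratio, I will use the recursion $h_{n+1}(a) = |\sigma_{[0,n)}(\sigma_n(a))| = \sum_{i \in \mathcal A} m_{ia}^{(n)} h_n(i)$, which follows from counting occurrences of each letter $i$ inside $\sigma_n(a)$. The entry comparison then gives
\[
h_{n+1}(a) \;=\; \sum_i m_{ia}^{(n)} h_n(i) \;\geq\; c \sum_i m_{ib}^{(n)} h_n(i) \;=\; c\, h_{n+1}(b),
\]
and together with the trivial case $h_0 \equiv 1$ this yields $h_n(a)/h_n(b) \geq c$ for every $n \geq 0$ and all $a, b \in \mathcal A$, with no induction needed.

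For exact finite rank, I will fix any $T$-invariant Borel probability measure $\mu$ on $X_{\boldsymbol\sigma}$ (one exists by compactness of $X_{\boldsymbol\sigma}$ and continuity of $T$), and set $p_a^{(n)} := \mu(B_n(a))$. Recognizability of $\sigma_n$ lets one write $B_n(a)$ as the disjoint union of $T$-translates of the bases $B_{n+1}(b)$, one translate for each of the $m_{ab}^{(n)}$ positions at which the letter $a$ occurs in $\sigma_n(b)$. $T$-invariance of $\mu$ then gives the recursion
\[
p_a^{(n)} \;=\; \sum_{b \in \mathcal A} m_{ab}^{(n)}\, p_b^{(n+1)},
\]
to which the same one-line computation applies, yielding $p_a^{(n)}/p_{a'}^{(n)} \geq c$ for every $n$ and all $a, a' \in \mathcal A$.

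Combining, $\mu(\mathcal T_n(a)) = h_n(a)\, p_a^{(n)} \geq c^2\, \mu(\mathcal T_n(a'))$ for all $a, a' \in \mathcal A$. By Lemma \ref{partition-generates-mble}, the level-$n$ towers $\{\mathcal T_n(a) : a \in \mathcal A\}$ partition $X_{\boldsymbol\sigma}$ modulo a $\mu$-null set, so $\sum_{a' \in \mathcal A} \mu(\mathcal T_n(a')) = 1$; summing the previous inequality over $a'$ yields $\mu(\mathcal T_n(a)) \geq c^2/|\mathcal A|$ for every $n$ and every $a$, i.e., exact finite rank with $\delta := c^2/|\mathcal A|$. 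The main non-routine point is the recognizability-based decomposition of $B_n(a)$ that validates the recursion for $p_a^{(n)}$; once that is in hand the proof is a straightforward two-sided matrix inequality.
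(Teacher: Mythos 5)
Your proof is correct, and it is worth noting that the paper itself does not prove this proposition: it is imported verbatim from \cite[Proposition 5.7]{BKMS:13}, where it is phrased for finite rank Bratteli diagrams. What you have written is a self-contained derivation inside the $S$-adic framework, resting on the two standard identities $h_{n+1}(a)=\sum_{i}m^{(n)}_{ia}h_n(i)$ and $\mu(B_n(a))=\sum_{b}m^{(n)}_{ab}\mu(B_{n+1}(b))$, the latter coming from the exact decomposition of $B_n(a)$ into the translates $T^{h_n(\sigma_n(b)_{[0,j)})}B_{n+1}(b)$ over the occurrences $\sigma_n(b)_j=a$; once both recursions are in place the entrywise inequality $m^{(n)}_{ia}\ge c\,m^{(n)}_{ib}$ does all the work, exactly as you say, and $\delta=c^2/|\mathcal A|$ works. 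Two small points of bookkeeping. First, the decomposition of $B_n(a)$ needs both containment and covering: disjointness of the translates is because they are distinct elements of $\mathcal{Q}_{n+1}$, which is a genuine partition under two-sided recognizability (this is the fact the paper records for $\mathcal{P}_n$ and which holds verbatim for $\mathcal{Q}_n$), while covering uses that every point of $X^{(n)}_{\boldsymbol\sigma}$ admits a centred $\sigma_n$-representation in $X^{(n+1)}_{\boldsymbol\sigma}$, which is part of the paper's standing recognizability/primitivity framework; you should justify the equality (not just one inclusion), since the upper bound $\mu(B_{a'}^{}(n))\le\sum_b m^{(n)}_{a'b}\mu(B_{n+1}(b))$ is genuinely needed for the ratio estimate. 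Second, Lemma~\ref{partition-generates-mble} is not quite the right citation for $\sum_{a'}\mu(\mathcal T_n(a'))=1$ — that lemma is about $(\mathcal Q_n)$ generating in measure; the normalisation you want follows directly from $\mathcal Q_n$ being a partition of $X_{\boldsymbol\sigma}$, again by recognizability. Neither point affects the validity of the argument, and since $c\le 1$ automatically, the trivial case $n=0$ of the height estimate is also fine.
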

As a direct consequence of Proposition \ref{prop:exact}, we get that if the system is finitary  and strongly primitive, then ${\boldsymbol \sigma}$ is of exact finite rank, and thus uniquely ergodic  as recalled above by  \cite[Theorem 1.2]{Boshernitzan-1992}.  Exact finite rank systems and linearly recurrent shifts enjoy similar properties, though they are different.
Linear recurrence  requires specific  additional combinatorial properties.

  \begin{theorem}\cite[Proposition 1.1]{Durand:00b}
The subshift $(X,T)$ is linearly recurrent if and only if there exists a strongly primitive, finitary and  proper directive sequence ${\boldsymbol \sigma}$ such that $(X,T)=(X_{\boldsymbol \sigma }, \sigma)$.
\end{theorem}

 In particular,  linearly recurrent systems have an exact finite rank representation.
   However, the next example shows that  exact finite rank directive sequences are not necessarily linearly recurrent.   
\begin{example}\label{ex:running-fourth}

 We have shown that  the directive sequence
 \[{\boldsymbol \sigma}=\sigma, \tau , \sigma, \sigma, \tau,  \sigma, \sigma , \sigma, \tau,  \sigma, \sigma,\sigma \dots \]
 from Example~\ref{ex:running-first}
 is recognizable. 
 Note that  $\tau \circ \sigma$ has a positive incidence matrix, as do $\sigma\circ \sigma$ and $\sigma\circ  \tau \circ \sigma$. Hence we can boundedly telescope the  directive sequence ${\boldsymbol \sigma}$ as 
   \[\sigma\circ \tau \circ \sigma,  \sigma\circ \tau \circ \sigma, \sigma \circ \sigma,\tau \circ \sigma, \sigma\circ \sigma, \sigma\circ \tau \circ \sigma, \sigma\circ \sigma, \sigma\circ \sigma,\tau \circ \sigma, \dots \]
   where this directive sequence takes values in $\{    \sigma\circ \tau \circ \sigma,   \sigma \circ \sigma, \tau \circ \sigma   \}$. We conclude by Proposition \ref{prop:exact} that $(X_{\boldsymbol \sigma},T)$ is of exact finite rank, although it is not linearly recurrent  such as stressed in~\cite{Durand:00b}.
   
\end{example}

We now introduce the $S$-adic counterpart of the notion of strong straightness from Definition \ref{def:straight-substitution} (stated in the stationary case). 
This notion allows one to relax the condition of properness, which we often do not have with a given directive sequence, even if it defines a linearly recurrent shift. For example, the conditions of Theorem \ref{thm:sufficient-mble} imply that the shift is linearly recurrent, by \cite[Lemma 3.1]{Durand:00b}, without assuming properness.

\begin{definition}[Strong straightness]\label{def:strongly-straight}
Let ${\boldsymbol \sigma}$ be a  straight directive sequence on $\mathcal A$. For each letter $a\in \mathcal A$  and for each $n$, let  $a_n$ be the    first letter of $\sigma_n(a)$. We call  ${\boldsymbol \sigma}$ {\em strongly straight} if \begin{enumerate} \item for each letter $a\in \mathcal A$ and each non-negative  $n$,  $\sigma_n(a_{n+1})$ starts with $a_n$,\item  if $ {\boldsymbol a}= {\boldsymbol b} $, then $(a_n)=(b_n)$, and
\item there exists $r$ such that for each $n$, for each two-letter word $\alpha\beta\in \mathcal L^{(n)}_{\boldsymbol \sigma}$, there exists a letter $\gamma$ such that $\alpha \beta$ is a subword of $\sigma_{[n, n+r)}(\gamma)$.
\end{enumerate}\end{definition}

Note that if ${\boldsymbol \sigma}$ is stationary and  if  its constant term $\sigma$ is strongly straight in the sense of Definition \ref{def:straight-substitution}, then it satisfies the conditions of Definition \ref{def:strongly-straight}. Moreover a strongly  straight  directive sequence is a  straight directive sequence, i.e., each letter $a$ defines a unique right-infinite limit word  ${\boldsymbol a}$.
Indeed Condition (i) guarantees that the prefix $\sigma_{[0,n)}(a_n)$ of $\sigma_{[0,n+1)}(a)$ is also a prefix of $\sigma_{[0,n+k)}(a)$ for each $a$, $n\geq 1$ and $k\geq 0$, and  it is also a prefix of the  unique right-infinite  limit word ${\boldsymbol a}$ defined by $a$, since   the sequence of words
 $(\sigma_{[0,n)}(a_{n}))$ is a nested sequence of words that increases in length.  See Figure \ref{fig:strongly-straight} for an illustration. Note also  that  Condition (ii) holds if ${\boldsymbol \sigma}$ is one-sided recognizable.
 We stress the fact that strongly  straight  directive sequences  are distinct from proper ones. For instance,  a directive sequence made of  substitutions that are such that   the image  of each  letter  $a$ starts with $a$  is an example of  a strongly   straight   sequence.  See  also
 Condition (i) in Example  \ref{ex:nontrivial height}.

 \begin{example}
 The directive sequence  ${\boldsymbol \sigma}$ from our running example~\ref {ex:running-first} is not strongly straight since ${\mathbf  a}={\mathbf  b}$,
 $a_n=a$ for all $n$,  and $b_n=b$ for all $n$ such that $\sigma_n=\sigma$. Moreover one checks that  Condition (iii) does not hold.
 \end{example}

The following example shows that there exist $S$-adic systems  that are  finitary, recognizable, strongly prefix-straight  and  not linearly recurrent. However note that the combination of the conditions
finitary, strongly  straight and strongly primitive implies linear recurrence, since factors of length $2$ occur with  uniform bounded  gaps, by  \cite[Lemma 3.1]{Durand:00b}.
\begin{example} 
 Consider  the substitutions  $S=\{\sigma, \tau\}$ with
 \begin{align*}
   a   \stackrel{\sigma}{\mapsto}  aaaa  \ \  & a    \stackrel{\tau}{\mapsto}  bbbb
   \\  b  \stackrel{\sigma}{\mapsto}  abaa \ \  & b  \stackrel{\tau}{\mapsto}  babb,
\end{align*}
and let
\[  {\boldsymbol{\sigma}}= \tau, \sigma, \tau, \sigma, \sigma, \tau, \sigma, \sigma , \sigma, \tau , \sigma, \sigma ,\dots \,. \]
We follow here the same scheme  of proof as in \cite[Section 2]{Durand:00b}.
One verifies that the difference between two successive occurrences of the word $abb$ in $\tau \circ  \sigma^n    (z) $ is greater than $ 4^n$   for any sequence $z$.
For $ n\geq 1$, let $\rho_n=  \tau \circ  \sigma \circ  \tau \circ  \sigma^2  \circ  \cdots \circ \tau \circ  \sigma^{n-1} $. One checks that 
if $w$ is a strict  return word of  $abb$ in $\tau \circ  \sigma^n  (z) $, then   $\rho_n(w)$ is a strict return word of $\rho_n(abb)$  in $X_{\boldsymbol{\sigma}}$. 
Thus
there exists $C>0$ such that   $\frac{|\rho_n(w)|} {|\rho_n(abb)|} \geq  C 4^n$. 
This implies that $X_{\boldsymbol \sigma}$ is not linearly recurrent.\end{example}

\begin{figure}

 \definecolor{red(munsell)}{rgb}{0.95, 0.0, 0.24}
\definecolor{mediumelectricblue}{rgb}{0.01, 0.31, 0.59}
 \definecolor{lavender(floral)}{rgb}{0.71, 0.49, 0.86}
\definecolor{lavenderblue}{rgb}{0.8, 0.8, 1.0}
\definecolor{darkgreen}{rgb}{0.0, 0.2, 0.13}
\definecolor{cadmiumgreen}{rgb}{0.0, 0.42, 0.24}

\begin{tikzpicture}
[-,>=stealth',shorten >=2pt, shorten <=2pt,auto,node distance=2.0cm,semithick] 
 \tikzstyle{every state}=[fill=white,draw=black,text=black]

  \node[state] 		(D)       {$a$};
  \node[state] 		(E)  [right of=D]       {$a_{n+2}$};
  \node[state] 		(F)  [right of=E]       {$b$};

  \node[state] 		(D1)  [above of=D]     {$a$};
  \node[state] 		(E1)  [right of=D1]     {$a_{n+1}$};
 \node[state] 		(F1)  [right of=E1]     {$b$};
 
  \node[state] 		(D2)  [above of=D1]     {$a$};
  \node[state] 		(E2)  [right of=D2]     {$a_n$};
 \node[state] 		(F2)  [right of=E2]     {$b$};

  \node[state] 		(D3) [above of=D2]      {$a$};
  \node[state] 		(E3)  [right of=D3]       {$a_{n-1}$};
  \node[state] 		(F3)  [right of=E3]       {$b$};
      \path
  
	(E)  edge [line width=1.2]   node[yshift=-10pt, xshift=12pt, color=black]  { $0$}  (E1)
 (E1)  edge [line width=1.2]   node[yshift=-10pt, xshift=12pt, color=black]  { $0$}  (E2)
 (E2)  edge [line width=1.2]   node[yshift=-10pt, xshift=12pt, color=black]  { $0$}  (E3)
 
      (D) edge[yshift=2pt, line width =1.2,  dashed] node[yshift=-10pt, xshift=12pt, color=black, ]  { $0$}  (E1)
      (D1) edge[yshift=2pt, line width =1.2, dashed] node[yshift=-10pt, xshift=12pt, color=black, ]  { $0$}  (E2)
      (D2) edge[yshift=2pt, line width =1.2,  dashed] node[yshift=-10pt, xshift=12pt, color=black, ]  { $0$}  (E3)
      
       (F) edge[yshift=2pt, line width =1.2,  dashed] node[yshift=-10pt, xshift=12pt, color=black, ]  { $0$}  (E1)
      (F1) edge[yshift=2pt, line width =1.2,  dashed] node[yshift=-10pt, xshift=12pt, color=black, ]  { $0$}  (E2)
      (F2) edge[yshift=2pt, line width =1.2, dashed] node[yshift=-10pt, xshift=12pt, color=black, ]  { $0$}  (E3)

	 ;
                            \end{tikzpicture}
  \caption{Part of the  natural Bratteli diagram for a strongly  straight directive sequence, where the letters $a$ and $b$ determine the same right-infinite limit word. Dashed edges correspond to minimal edges from $a$ or $b$ which lead directly to  the limit word ${\boldsymbol a}={\boldsymbol b}$.
   The solid edges depict the limit word   ${\boldsymbol a}$.  }
   \label{fig:strongly-straight}
   \end{figure}

We remark that the notion of a strongly straight directive sequence is restrictive. However the conditions are not difficult to verify.
We have  gathered all notions required to  state  the following generalisation of Proposition \cite[Proposition 8]{CDHM:2003}.   Its topological counterpart is given by  Theorems \ref{thm:sufficient-continuous-A}.  
We start with a preliminary lemma. For all $n$, let $\mathcal B_n$ denote the $\sigma$-algebra generated by $\mathcal Q_n$ 
 and let 
$\E_n:L^1(X_{\boldsymbol{\sigma}},\mathcal B,\mu)\rightarrow L^1(X_{\boldsymbol \sigma},\mathcal B_n, \mu)$ be  the conditional expectation operator; write $f_n= \E_n(f)$. 
 The function $f_n$ is constant on elements of $\mathcal Q_n$. The sequence $(f_n)$ is a martingale and converges to $f$ in $L^2(\mu)$. 
\begin{lemma}\label{lem:bastard-referee}
Suppose that $(X_{\boldsymbol \sigma}, T,\mu) $ is of exact finite rank, with eigenfunction $f$  such that $|f|=1$. 
For each $a\in \mathcal A$, let 
 $v_n(a): = \E_n(f) $ on $B_n(a)$. Then 
 $|v_n(a)|\rightarrow 1.$
\end{lemma}
\begin{proof}
As $f$ is an eigenfunction, for the eigenvalue $\lambda$, then  $\E_n(f)=f_n=\lambda^j v_n(a)$ on $T^j (B_n(a))$ for  $0\leq j < \sigma_{[0,n)}(a)$,  and so  for all $x\in X_{\boldsymbol \sigma}$
$$f_n (x)= \sum_{a \in  {\mathcal A }} \sum _{j=0}^{|\sigma_{[0,n)(a)}|-1}  v_n(a) \lambda^j  \chi _{T^j B_n(a)}(x),$$
where $\chi$ refers to the indicator function.
Therefore
$||f_n||_2^2 = \sum_{a\in \mathcal A}|v_n(a)|^2 \mu({\mathcal T}_n(a))$. As  $|v_n(a)|= \left | \frac{1}{\mu(B_n(a))} \int_{B_n(a)}    f(x) \, d\mu(x)  \right |\leq 1$, then for any $a\in \mathcal A$, we have 
\begin{align*}
 ||f_n||_2^2 &\leq  |v_n(a)|^2\mu(   {\mathcal T}_n(a)  ) + \mu(  \cup_{b\neq a} \mu( {\mathcal T}_n(b) ) )\\
 & \leq   |v_{n}(a)|^2\mu(   {\mathcal T}_{n}(a)  ) + 1- \mu(   {\mathcal T}_{n}(a)  ) =    ( |v_{n}(a)|^2 -1)\mu(   {\mathcal T}_{n}(a)  ) + 1. 
\end{align*}
Now suppose that  $|v_{n_k}(a)| \rightarrow \rho<1$ for some  subsequence $(n_k)_k$.  The hypothesis of exact finite rank tells us that there is a  $\delta>0$ such that $\mu(   {\mathcal T}_n(a)  )\geq \delta$ for each $n$. Thus for $\varepsilon$ sufficiently small
 \[ \lim _k||f_{n_k}||_2^2 
 < (\rho + \varepsilon -1)\delta +1<1\] for $k$ large enough,
 contradicting the fact that $||f_n||_2^2 \rightarrow ||f||_2^2 =1$.
\end{proof}

Recall the comment after Proposition \ref{prop:exact}  that finitary, strongly primitive directive sequences define uniquely ergodic shifts $(X_{\boldsymbol \sigma}, T,\mu)$.

\begin{theorem}\label{thm:sufficient-mble}
Let ${\boldsymbol \sigma}$ be a finitary, strongly  straight, recognizable and strongly primitive   directive sequence on $\mathcal A$. If $\lambda\in  {\mathbb S}^1$ is a measurable eigenvalue of $(X_{\boldsymbol \sigma}, T,\mu)$,  then

\[   \sum_{n=1}^{\infty} | \lambda^{h_n(w_n(a,b))}   -1     |^2 < \infty \]
for  any  $a,b$  such that  ${\boldsymbol a}= {\boldsymbol b}$
and  any $w_n(a,b)\in \{w\in \mathcal L_{\boldsymbol \sigma}^{(n)}: w\mbox{ is a strict transition word from $a$ to $b$} \}$. 
\end{theorem}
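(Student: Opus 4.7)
The plan is to adapt the conditional-expectation approach of Host \cite{Host:1986} (refined by \cite{CDHM:2003} in the proper linearly recurrent case) to the present finitary, strongly prefix-straight, strongly primitive setting. The new ingredient to be exploited is that \emph{strong prefix-straightness} is precisely what will allow one to compare the base averages $\beta_{n,a}$ and $\beta_{n,b}$ when $\boldsymbol{a}=\boldsymbol{b}$, thereby bypassing any coboundary hypothesis. Fix a measurable eigenfunction $f$ with $|f|=1$ for the eigenvalue $\lambda$, and let $\mathbb{E}_n$ denote conditional expectation with respect to the $\sigma$-algebra generated by the partition $\mathcal{Q}_n$ of~\eqref{eq:partisadiconesided}. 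Since $f\circ T = \lambda f$, the function $\mathbb{E}_n f$ takes the form $\lambda^k\beta_{n,a}$ on $T^k B_n(a)$, where $\beta_{n,a}:=\mathbb{E}_n f|_{B_n(a)}$. By Lemma~\ref{partition-generates-mble} the partitions $(\mathcal{Q}_n)$ generate in measure, so $\mathbb{E}_n f \to f$ in $L^2$; writing $\delta_n:=\|\mathbb{E}_{n+1}f-\mathbb{E}_n f\|_2^2$, orthogonality of martingale increments gives $\sum_n \delta_n = \|f-\mathbb{E}_0 f\|_2^2 < \infty$. In addition, finitariness together with strong primitivity makes the system exact finite rank by Proposition~\ref{prop:exact}, so $\mu(\mathcal{T}_n(a))\ge\delta$ uniformly in $n$ and $a$; expanding $\|\mathbb{E}_n f\|_2^2=\sum_a\mu(\mathcal{T}_n(a))|\beta_{n,a}|^2\to 1$ then forces $|\beta_{n,a}|\to 1$ for every $a$.

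Next I will establish an approximate cocycle relation. Given a transition word $w=w_n(a,b)$ that fits inside $\sigma_n(\gamma)$ for some letter $\gamma$ at level $n+1$ (which can always be arranged for a suitable bounded-length choice thanks to finitariness), the word $wb$ distinguishes two levels of the tower $\mathcal{T}_{n+1}(\gamma)$: a level $p$ at which an $a$-sub-tower begins and the level $p+h_n(w)$ at which a $b$-sub-tower begins. The contributions of these two sub-towers to $\|\mathbb{E}_{n+1}f-\mathbb{E}_n f\|_2^2$ are
\[
h_n(a)\mu(B_{n+1}(\gamma))\,|\lambda^p\beta_{n+1,\gamma}-\beta_{n,a}|^2 \quad\text{and}\quad h_n(b)\mu(B_{n+1}(\gamma))\,|\lambda^{p+h_n(w)}\beta_{n+1,\gamma}-\beta_{n,b}|^2,
\]
and exact finite rank bounds each multiplicative prefactor $h_n(a)\mu(B_{n+1}(\gamma))$ uniformly below by a positive constant (using that $|\sigma_n(\gamma)|$ is uniformly bounded). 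A triangle inequality then yields $|\lambda^{h_n(w)}\beta_{n,a}-\beta_{n,b}|^2\le C\delta_n$, which is summable. To compare $\beta_{n,a}$ and $\beta_{n,b}$ when $\boldsymbol{a}=\boldsymbol{b}$, I would invoke strong prefix-straightness in its iterated form $\phi_n(\phi_{n+1}(a))=\phi_n(a)$, where $\phi_n$ sends a letter to the first letter of $\sigma_n(a)$. This forces the first letter of $\boldsymbol{a}$ to be $\phi_0(a)$; applying the same reasoning level by level (using recognisability to desubstitute $\boldsymbol{a}$ uniquely) propagates the identity to every level, so that $\phi_n(a)=\phi_n(b)$ for all $n$ whenever $\boldsymbol{a}=\boldsymbol{b}$. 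Hence $B_{n+1}(a)$ and $B_{n+1}(b)$ both sit in the same base $B_n(\phi_n(a))$, and the same estimate applied to the trivial transition word delivers $|\beta_{n+1,a}-\beta_{n,\phi_n(a)}|^2\le C\delta_n$ and likewise for $b$, so that $\sum_n|\beta_{n,a}-\beta_{n,b}|^2<\infty$ by a triangle inequality.

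The conclusion will then follow from the decomposition
\[
(\lambda^{h_n(w_n(a,b))}-1)\beta_{n,a} = \bigl(\lambda^{h_n(w_n(a,b))}\beta_{n,a}-\beta_{n,b}\bigr) - \bigl(\beta_{n,a}-\beta_{n,b}\bigr)
\]
combined with the lower bound $|\beta_{n,a}|\ge 1/2$ for $n$ large. The main obstacle I anticipate is covering \emph{arbitrarily} long transition words $w_n$, whose level-$n$ length can grow without control, rather than merely those fitting in a single image $\sigma_n(\gamma)$: one will have to replace the single jump $n\to n+1$ by a telescoping chain $n=n_0<n_1<\dots<n_K$ chosen so that each piece of $w_n b$ fits in a single level-$n_{j+1}$ image, and then sum the resulting square contributions while preventing a blow-up factor of the form $|\sigma_{[n,N)}(\gamma)|$ from spoiling summability. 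The second delicate point is the propagation $\phi_n(a)=\phi_n(b)$ to every level: since limit words are one-sided and only two-sided recognisability is assumed, one will likely need to realize $\boldsymbol{a}$ and $\boldsymbol{b}$ as two-sided points sharing a common past (possible by minimality) and apply two-sided recognisability step by step.
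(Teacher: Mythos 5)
Your proposal is correct and follows essentially the same route as the paper's own proof: a martingale of conditional expectations on the $\mathcal{Q}_n$-towers, exact finite rank via Proposition~\ref{prop:exact} to lower-bound tower masses (giving both the pointwise estimates from the summable increments and $|\beta_{n,a}|\to 1$), strong prefix-straightness to identify the base values of letters with the same limit word, and then differencing two positions inside a common higher-level tower — which is exactly the paper's ``two consecutive values of $j\in t_n(a)$'' device. The two obstacles you flag are precisely the points the paper treats tersely (realizing a given transition word inside the tower structure, and propagating $\phi_n(a)=\phi_n(b)$ level by level), and your first worry is milder than you fear: finitariness plus strong primitivity makes every transition word uniformly bounded in length, so the bounded telescoping chain you sketch suffices without any blow-up.
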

\begin{proof}
We follow \cite{CDHM:2003}. We recall from Section \ref{sec:generating-partitions} that 
given $a,b\in \mathcal A$ and $n$, 
$$ t_n(a,b) = \{ 0\leq t<  h_n(a): T^t (B_n(a))\subset B_{n-1}(b)\},$$
 and 
the cardinality of $t_n(a,b)$ equals the $(b,a)$ entry of $M_{\sigma_{n-1}}$.  In particular $ t_n(a,b)\neq \emptyset$ if and only if $b$ appears in $\sigma_{n-1}(a)$, and $0\in
 t_n(a,b)$ means that $b$ is the first letter of $\sigma_{n-1}(a)$. See Figure \ref{fig:partitionQ} for an illustration.

 Let $f$ be an  eigenfunction for $\lambda$. 
 As $|\lambda|=1$, and $(X_{\boldsymbol \sigma}, \sigma)$ is minimal, we can take $|f|=1$ almost everywhere. 
 Recall that 
 $f_n= \E_n(f)$ where  $\E_n$ is the conditional expectation defined by the $\sigma$-algebra generated by $\mathcal Q_n$.
  The functions $f_n-f_{n-1}$ are mutually orthogonal (see for example \cite{Doob-1953} for an exposition on martingales), so that $\sum_{n=1}^{\infty}  ||
  f_{n-1}-f_n||_2^2<\infty$.

 Since ${\boldsymbol \sigma}$ is  strongly primitive, there  exists $r$ such that for each $n$, 
$\sigma_{[n,n+r)}$ has a  positive incidence matrix. As ${\boldsymbol \sigma}$  takes values from a finite set, we can telescope ${\boldsymbol \sigma}$ so that  it takes values from a finite set and  all incidence matrices are positive.
 This implies that there exists $c$ such that
$$\frac{\min_{a,b}m_{ab}^{(n)}}{\max_{a,b}m^{(n)}_{ab}}\geq c,$$ 
which   allows us to conclude, by  Proposition \ref{prop:exact}, that  ${\boldsymbol \sigma}$ is of exact finite rank, so that there is $\delta>0$ with $h_n(a) \mu (B_n(a))\geq \delta$ for each $a$.

 Let $j\in t_n(a,b)$. Recall
 $v_n(a): = \E_n(f) $ on $B_n(a)$. Then, for each $0\leq p < h_{n-1}(b)$, we have $T^{j+p}B_n(a) \subset T^{p}B_{n-1}(b)$, and if  $x\in T^{j+p}B_n(a)$, then $f_n(x)= \lambda^{j+p}v_n(a)$, $f_{n-1}(x) = \lambda^{p}v_{n-1}(b)$.
 So
 \begin{equation}\label{eq:prel} ||f_n-f_{n-1}  ||_2^2\geq h_{n-1}(b) \mu(B_{n}(a)) |     \lambda^{j}v_n(a)-v_{n-1}(b)    |^2 \geq \delta  \frac{h_{n-1}(b)}{h_n(a)}     |     \lambda^{j}v_n(a)-v_{n-1}(b)    |^2 .\end{equation}

Let $L$ be the maximum sum   of the entries of a column of any of the incidence matrices, and let $b^{*}$ be such that $h_{n-1}(b^{*})= \min_{b\in \mathcal A} h_{n-1}(b)$. One has 
 \begin{align*}
  \max_{a,b}\frac{h_{n}(a)}{       h_{n-1}(b)} &= \frac{ \max_{a}h_{n}(a)}{  \min_{b}     h_{n-1}(b) }  =  \frac{ \max_{a}h_{n}(a)}{       h_{n-1}(b^{*}) }   \leq \frac{L \max_{b\in \mathcal A} h_{n-1}(b)}{         h_{n-1}(b^{*})   } \leq \frac{L}{c},
\end{align*}
 where the last statement follows from Proposition \ref{prop:exact}. This implies that $|\lambda^{j}v_n(a)-v_{n-1}(b)    |^2 \leq  \frac{L}{\delta c}     ||f_n-f_{n-1}  ||_2^2$. Taking the maximum, over all $j\in t_n(a,b)$ and $a,b\in \mathcal A$, we have  
\begin{equation} \label{eq:general-l2}  \sum_{n=1}^{\infty} \max_{a,b} \max_{j\in t_n(a,b)}    |     \lambda^{j}v_n(a)-v_{n-1}(b)    |^2       < \infty .\end{equation}

We use the notation of Definition \ref{def:strongly-straight}, writing that $\sigma_{n-1}(a)$ starts with $a_{n-1}$.
This gives 
$0\in t_{n}(a,a_{n-1})$,  and letting $j=0$ in \eqref{eq:general-l2}, we get $$\sum_{n=1}^{\infty}    |     v_n(a)-v_{n-1}(a_{n-1})    |^2       < \infty.$$
 Now assume that     ${\boldsymbol a}= {\boldsymbol b}$. Since ${\boldsymbol \sigma}$ is strongly straight,
     then in addition to  $0\in t_{n}(a_{n},a_{n-1})$ for each $n$ (i.e., $\sigma_{n-1}(a_n)$ starts with $a_{n-1}$), 
  we  also have  $0\in t_{n}(b,a_{n-1})$ for each $n$ (i.e., $\sigma_{n-1}(b)$ starts with $a_{n-1}$), since $b_{n-1}=a_{n-1}$. See also Figure \ref{fig:strongly-straight}.    
   
Using \eqref{eq:general-l2} two more times with $j=0$, we get    \[    \sum_{n=2}^{\infty}    |     v_{n-1}(a_{n-1})-v_{n-2}(a_{n-2})    |^2       < \infty   \mbox{ and } \sum_{n=2}^{\infty}    |     v_{n-2}(a_{n-2})-v_{n-1}(b)    |^2       < \infty, \]
   and we obtain
 \begin{equation} \label{eq:l2}  \sum_{n=1}^{\infty} \max_{a,b:    {\boldsymbol a}= {\boldsymbol b}         }     |     v_n(a)-v_{n-1}(b)    |^2       < \infty .\end{equation}
As  $\boldsymbol{\sigma}$ is of exact finite rank, 
 then by Lemma 
\ref{lem:bastard-referee},
 $|v_n(a)|\rightarrow 1$ for each $a\in\mathcal A$. Therefore from \eqref{eq:general-l2} and \eqref{eq:l2}, we obtain 
  \begin{align}\label{eq:square-summable}   \sum_{n=1}^{\infty} \max_{a,b:    {\boldsymbol a}= {\boldsymbol b}         }  \max_{j\in t_n(a,b)}   |     \lambda^{j}- 1    |^2       < \infty  .\end{align}  
  Since all incidence matrices are positive,  any strict transition word $aw_n\in \mathcal L^{(n)}$ from $a$ to $b$  must  appear in $\sigma_{n}(\alpha\beta)$ for some word $\alpha\beta \in \mathcal L^{(n+1)}_{\boldsymbol{\sigma}}$ of length 2. Also, by the assumption (iii) of the definition of strong straightness, there exists $r'$ such that  the word $\alpha\beta$ appears in $\sigma_{[n,n+r')}(\gamma)$ for some $\gamma$. Since all incidence matrices are positive, $\alpha\beta$ appears in $\sigma_{[n,n+r'+1)}(a)$.
  Thus by telescoping boundedly,  we can assume that $aw_nb$ appears in $ \sigma_n(a)$.
  We write $\sigma_n(a)=p_naw_nbs_n$. Then  $j_n:= | h_{n} (p_n)|\in t_{n+1}(a,a)$,  $J_n:= | h_{n} (p_naw_n)|\in t_{n+1}(a,b)$, so that since ${\boldsymbol a} = {\boldsymbol b}$, and by \eqref{eq:square-summable}, we have
   \[   \sum_{n=1}^{\infty}  |     \lambda^{j_n}- 1    |^2       < \infty  \mbox{ and } \sum_{n=1}^{\infty}  |     \lambda^{J_n}- 1    |^2       < \infty .\] Thus
  \[   \sum_{n=1}^{\infty}  |     \lambda^{h_n(aw_n)}- 1    |^2    =    \sum_{n=1}^{\infty}  |     \lambda^{J_n - j_n}- 1    |^2  =  \sum_{n=1}^{\infty}  |     \lambda^{J_n }- \lambda^{j_n }    |^2 \leq   \sum_{n=1}^{\infty}  |     \lambda^{j_n}- 1    |^2       +  \sum_{n=1}^{\infty}  |     \lambda^{J_n}- 1    |^2  <\infty.\]
  \end{proof}

\section{The constant-length $S$-adic case}\label{constant-length-S-adic}

 We apply  the results of the previous sections
to
 the family of $S$-adic systems generated by constant-length directive sequences. This is a natural starting point; after all Host's work on coboundaries  generalises the notion of {\em height} in Dekking's and Kamae's works on constant-length substitutions; see \cite{Dekking:1977,Kamae:1972} and also \cite{Queffelec:10}.
 We  first discuss eigenvalues and constant coboundaries in  the continuous case  in Section \ref{subsec:consl}, we then  focus on  the Toeplitz   $S$-adic shifts   in Section 
 \ref{Toeplitz-S-adic}, and     we lastly consider  measurable eigenvalues  in Section \ref{subsec:clmeasurable}. 
 
\subsection{Eigenvalues and constant coboundaries} \label{subsec:consl}

We  first start  with the following remark, easily  providing continuous eigenvalues.
\begin{remark}\label{odometer-eigenvalues}
Let $(\sigma_n)$   be a  recognizable and constant-length directive  sequence with  sequence of lengths  $(q_n)$. Assume that $X_{\boldsymbol{\sigma}}$ is aperiodic.
Note that for any  $m$, 
 $\lambda=e^{\frac{2\pi i}{q_0\dots q_{m}}}$ is an eigenvalue of  the recognizable $S$-adic system  of constant length  $( X_{\boldsymbol{\sigma}},T)$. For, as $h_{m+1}(a) = q_0 q_1 \dots q_{m}$, for each letter $a$,
 we can use the partition
   $\mathcal Q_{m+1}=  \{ T^k \sigma_{[0,m+1)}([a]):\, a  \in \mathcal{A},  0\leq k<    q_0 q_1 \dots q_{m}    \}$ to  build an eigenfunction.  
 Namely,  let $f(x)=\lambda^{k}$ if 
 $x\in  T^k \sigma_{[0,m+1)}([a])$.  Then $f$ is a continuous eigenfunction for the eigenvalue $\lambda$. 
\end{remark}

We can translate the previous remark to information about equicontinuous factors of  $( X_{\boldsymbol{\sigma}},T)$.
Indeed, we    first recall that if  $\exp(  2i \pi /r)$ is an  eigenvalue of $(X,T)$, then 
 the addition of $1$ on the finite group ${\mathbb Z}/r {\mathbb Z}$  is a factor of $(X,T)$.  We now  extend this fact  to the case where 
  $\exp(  2i \pi /q_n)$  is an  eigenvalue of $(X,T)$ for each $n$. We first recall the definition of an odometer
 associated to a sequence $(q_n)_n$ of positive integers,  where we assume $q_n\geq 2$. 
Given a sequence $(q_n)_{n\geq1}$ of positive integers,  define the group \[\Z_{(q_n)} :=\prod_{n\geq 1} \Z/q_n\Z,\] where the group operation of   addition is performed with carry. For a detailed exposition of equivalent formal definitions, we refer the reader to \cite{Downarowicz:2005}. Endowed with the product topology over the discrete topology on each $\Z/q_n\Z$ ,  the group $\Z_{(q_n)}$ is a compact metrizable  topological group, 
where the multiples of the unit $z=\dots 0\,0\,1$, which we simply write as $z=1$, are dense. We write elements $(z_n)_{n\geq 1}$ of $\Z_{(q_n)}$ as left-infinite sequences $\dots z_2\,z_1$ where $z_n\in \Z/q_n\Z$,
so that addition in $\Z_{(q_n)}$ has the carries 
propagating to the left in the usual way as in $\Z$. 
If $q_n=p$ is constant, then  $\Z_{(q_n)}= \Z_{(p)}$ is the classical ring of $p$-adic integers,  and the addition of $1$ on the  $p$-adic group  ${\mathbb Z}_{(p)}$  is a factor if and only if
  $\exp(  2i \pi /p^n)$ is an eigenvalue  for every $n \geq 1$ (se e.g. \cite{Fog02}). 
Given $(q_n)$ and $\tilde h\in \N$, we use  $\Z_{\tilde h,(q_n)}$ to denote the group
defined by the sequence $\tilde h, q_1, q_2, \dots $.  Finally recall that an {\em odometer} is a dynamical system $(Z, +1)$ where 
 $ Z=\Z_{(q_n)}$ for some sequence $(q_n)$ and $+1$ denotes the homeomorphism of adding the unit $1$.

The following  arithmetic lemma  will be essential when finding either measurable or topological eigenvalues of a constant-length $S$-adic shift.
It can be considered as an analogue of  the discussion  in Section  (2.2) in \cite{Host:1986} (see also  Lemma \ref{lem:eigenvectorbis}). 
We  stress the fact that in  the constant-length case, coboundaries $h$  are constant they do not depend on    letters), since  they are  given by limits  of the form  $h= \lim_n  \lambda^{q_0\dots q_{n}} $. The following   statement   is motivated by the fact that    taking the quotient of  two successives terms  yields    $\lim_{n}\lambda^{q_0\dots q_{n}(q_{n+1} -1)}= 1$, from which  we  can deduce crucial information in the finitary case.

\begin{lemma}\label{rational-eigenvalue}
Let  $\lambda \in \mathbb S^{1}$, let the sequence $(q_n)$ of integers  take on  finitely many  values, and suppose that  $q_n\geq 2$ for all $n$.
If
 \begin{equation} \label{convergence}\lim_{n}\lambda^{q_0\dots q_{n}(q_{n+1} -1)}  =1,\end{equation}
  then  $\lambda$ is rational, and more precisely,   
  for each positive integer $m$   sufficiently large, there exists $k$ such that $$\lambda= e^{2\pi i \frac{ k}
{q_0 \dots q_m(q_{m+1}-1) }}.$$

\end{lemma}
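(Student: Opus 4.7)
The plan is to first show that $\lambda$ is forced to be a root of unity, then extract the explicit denominator from the convergence. Write $\lambda = e^{2\pi i t}$ with $t\in\R$ and set $P_n := q_0 q_1 \cdots q_n$, so the hypothesis reads $P_n(q_{n+1}-1)t \to 0 \pmod{1}$. Since $(q_n)$ takes only finitely many values in a set $\{Q_1, \ldots, Q_r\}$ with each $Q_j \geq 2$, write $Q_{\max} := \max_j Q_j$ and let $e := \mathrm{lcm}(Q_1-1, \ldots, Q_r-1)$. Multiplying $(q_{n+1}-1)P_n t$ by the bounded positive integer $e/(q_{n+1}-1)$ then yields $e P_n t \to 0 \pmod{1}$ along the full sequence.

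Next I would set $\mu := \lambda^e$, so that $\mu^{P_n} \to 1$. Writing $\mu = e^{2\pi i s}$ and $sP_n = a_n + \delta_n$ with $a_n \in \Z$ the nearest integer (so $|\delta_n| \leq 1/2$), the hypothesis becomes $\delta_n \to 0$. Using $P_{n+1} = q_{n+1}P_n$, one obtains the identity
\[ a_{n+1} - q_{n+1}a_n \;=\; q_{n+1}\delta_n - \delta_{n+1}. \]
The left-hand side is an integer, while the right-hand side has modulus at most $Q_{\max}|\delta_n| + |\delta_{n+1}|$, which is strictly less than $1$ for all large $n$. Hence both sides vanish, giving $\delta_{n+1} = q_{n+1}\delta_n$ eventually. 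Since each $q_{n+1} \geq 2$, a single nonzero $\delta_n$ would then force $|\delta_{n+k}| \geq 2^k |\delta_n|$, contradicting $\delta_n \to 0$. Therefore $\delta_n = 0$ for all large $n$, meaning $sP_n \in \Z$, whence $s \in \mathbb Q$; it follows that $t = s/e \in \mathbb Q$ and $\lambda$ is a root of unity.

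Once $\lambda$ is known to have finite order, the sequence $(\lambda^{P_n(q_{n+1}-1)})$ lies in a finite discrete subset of $S^1$, so its convergence to $1$ forces $\lambda^{P_m(q_{m+1}-1)} = 1$ for all sufficiently large $m$. Fixing any such $m$ immediately yields $\lambda = e^{2\pi i k/(P_m(q_{m+1}-1))}$ for some integer $k$, as claimed.

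The most delicate step will be the bootstrap $\delta_{n+1} = q_{n+1}\delta_n$ for large $n$; once established, the assumption $q_n \geq 2$ does the amplification work and rules out the Liouville-type irrational $t$ that might otherwise be rationally approximable at the forced rate. Weakening the bound to $q_n \geq 1$ would break this argument, consistent with the fact that the statement itself fails without the lower bound.
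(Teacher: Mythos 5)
Your proposal is correct, and it reaches the lemma by a route that differs from the paper's in two genuine respects. The paper works directly with the exponents $q_0\cdots q_n(q_{n+1}-1)$: writing $q_0\cdots q_n(q_{n+1}-1)t=k_n+\varepsilon_n$, it introduces the ratios $r_n=q_{n+1}(q_{n+2}-1)/(q_{n+1}-1)$, shows $k_{n+1}=r_nk_n$ and $\varepsilon_n=0$ for all large $n$ by the same ``integer equals something small, hence zero'' mechanism you use, and then extracts the explicit value of $t$ through a telescoping computation of $r_{m+n-1}\cdots r_m\,k/\bigl(q_0\cdots q_{m+n}(q_{m+n+1}-1)\bigr)$, which collapses to $k/\bigl(q_0\cdots q_m(q_{m+1}-1)\bigr)$ independently of $n$. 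You instead first clear the factors $q_{n+1}-1$ by multiplying by the bounded integers $e/(q_{n+1}-1)$, with $e$ the lcm of the finitely many values of $q-1$, reducing everything to $\lambda^{eP_n}\to 1$ with $P_n=q_0\cdots q_n$; this makes your recursion $a_{n+1}=q_{n+1}a_n$ purely integral, whereas the paper's coefficients $r_n$ need not be integers, so its assertion that $k_{n+1}-r_nk_n$ vanishes eventually ``since $(k_n)$ is integer valued'' really rests on the unstated observation that these differences lie in $\tfrac1{q_{n+1}-1}\Z$ with bounded denominator---your reduction sidesteps that point cleanly. Second, once $\lambda$ is known to be a root of unity, you obtain the explicit form $\lambda=e^{2\pi i k/(q_0\cdots q_m(q_{m+1}-1))}$ by discreteness (the sequence $\lambda^{P_n(q_{n+1}-1)}$ takes finitely many values, so it is eventually $1$), rather than by the paper's telescoping identity. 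The amplification step, $\delta_{n+1}=q_{n+1}\delta_n$ with $q_{n+1}\geq 2$ forcing $\delta_n=0$, is the same engine as the paper's use of $r_j>1$, and your closing remark about $q_n\geq 2$ being essential matches the paper's Example \ref{non-rational}.
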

\begin{proof}
Let $\lambda = e^{2\pi i t}$. By assumption
\begin{equation} \label{eq:r}
q_0\dots q_{n}(q_{n+1} -1)t  =  k_{n} +  \varepsilon_{n}
\end{equation}
where $k_{n}\in \Z$ and $ \varepsilon_{n}\rightarrow 0$. Let
\begin{align*}
r_n:=q_{n+1}\frac{q_{n+2}-1}{q_{n+1}-1}.
\end{align*}

Note that the sequence $(r_n)$ takes on finitely many values. 
We have that
\begin{align*}
k_{n+1}-r_n k_n &= q_0\dots q_{n+1}(q_{n+2}-1) t - \varepsilon_{n+1}
-r_n \left ( q_0\dots q_{n}(q_{n+1}-1) t - \varepsilon_{n} \right)\\
&= q_0\dots q_{n+1}(q_{n+2}-1) t  -  r_n  ( q_0\dots q_{n}(q_{n+1}-1) t ) - \varepsilon_{n+1}+r_n\varepsilon_n
\\
&= -\varepsilon_{n+1}+r_n\varepsilon_n. 
\end{align*}
Since the sequence $(r_n)_n$ takes on finitely many values and $\varepsilon_n\rightarrow 0$,
we get $k_{n+1}-r_nk_n\rightarrow 0.$ Since in addition $(k_n)$ is integer valued,  we have
 $k_{n+1}-r_n k_n=-\varepsilon_{n+1}+r_n\varepsilon_n=0$ for $n$ large enough. But then, recursively, we get $\varepsilon_{n+\ell} =r_{n+\ell-1}\dots r_{n+1}r_n \varepsilon_{n}  $ for all $\ell\geq 1$.  Since $r_j>1$ for each $j$, we conclude that 
   $\varepsilon_n=0$ for all $n$ large.  The statement of the lemma follows from   (\ref{eq:r}).

\end{proof}

With the notation of the previous lemma, then writing $k$ and $q_0 \dots q_m(q_{m+1}-1)$ in their unique prime factorization and simplifying the common factors, we  even  get an irreducible quotient $\frac{p}{q}$, where the prime factors of $q$ are in those of $\{ q_0,\dots, q_m, q_{m+1}-1\}$.

\begin{example} \label{non-rational}
We give an example to show that the  finitary assumption (i.e., of finitely many values $q_n$) in Lemma \ref{rational-eigenvalue} cannot  be relaxed without additional qualification. 
Take $\lambda = e^{2\pi i \sum_{k=1}^{\infty}  \frac{1}{10^{k!}}}$. The exponent   $t=\sum_{k=1}^{\infty}  \frac{1}{10^{k!}}$ is  the  Liouville number, known to be transcendental. Take $(q_n)$ such that $q_0q_1\dots q_n=10^{ n!}$ for $n\geq 1$. Then  $  q_0\dots q_n \sum_{k=1}^{\infty}  \frac{1}{10^{k!}}\mod \Z \equiv  10^{ n!}     \sum_{k=n+1}^{\infty}  \frac{1}{10^{k!}}\rightarrow_{n} 0$, so \eqref{convergence} is satisfied. 
\end{example}

 \begin{remark} \label{rem:rational}
If $ \lambda$ is rational,   i.e., if  $\lambda  = e^{2i \pi t}$ with $t\in \mathbb Q$, and  $ h(a)=\lim_{n\rightarrow \infty}\lambda^{h_n(a)} $   for some $a$, 
then  $h(a)$ is   rational.
Note  though that $h(a)$  rational does not necessarily imply that $\lambda $ is  rational: take e.g.  the case of a trivial coboundary $h\equiv 1$ with  $\sigma$   being  the Fibonacci substitution
$\sigma: a  \mapsto ab, b \mapsto a$. The  topological eigenvalues of this Pisot  irreducible substitution  belong to $ {\mathbb Z} [\varrho] +  {\mathbb Z}$,
 where $\varrho= \frac{1 + \sqrt 5}{2}$ (see Section \ref{sec:Pisot}). 
However, if
 we are in the finitary constant-length case, Lemma \ref{rational-eigenvalue} implies that  $\lambda$  has to be rational.
\end{remark}

In the case where the directive sequence consists of constant-length substitutions, inspection of the proof of Theorem 
\ref{thm:fully-essential-coboundary}     yields that
 a weak coboundary $(h, \bar f)$ associated to a continuous function  is constant, as in the constant-length substitution case, and also that
 we can lighten the hypothesis concerning fully essential words. Indeed, in Theorem \ref{thm:fully-essential-coboundary} we require that 
 for each $a\in\cA$, there exists $\ell\in\cA$ such that $a\ell$ is a fully essential word. Below, with  Theorem \ref{cor:fully-essential-constant-coboundarybis}, we  just need that there exists a fully essential word of length 2.

We can  now state  the following   theorem.

\begin{theorem}\label{cor:fully-essential-constant-coboundarybis} 
Let $\boldsymbol{\sigma} = (\sigma_n)_{n\ge0}$ be a  straight  constant-length directive sequence on $\mathcal A$, with length sequence $(q_n)_{n\geq 0}$, where each $q_n\geq 2$. Suppose that $X_{\boldsymbol{\sigma}}$ is aperiodic and 
 that  $\boldsymbol \sigma$ has a  fully essential word of length 2.

If $\lambda\in {\mathbb S}^1$ is a continuous  eigenvalue of $(X_{\boldsymbol{\sigma}}, T)$,  then 
\begin{equation}\label{constant-coboundary}h:= \lim_{n\rightarrow \infty}\lambda^{q_0 \dots q_{n}}\end{equation}
exists and 
  defines a constant   weak  coboundary.
  
Assume  in addition  that  $\boldsymbol{\sigma} = (\sigma_n)_{n\ge0}$ is  finitary and recognizable.
Suppose that  $h:= \lim_{n\rightarrow \infty}\lambda^{q_0 \dots q_{n}}$
exists  for some $\lambda\in {\mathbb S}^1$, and that $h$  is a  coboundary.   Then
   $\lambda$ is a continuous eigenvalue, and
 $e^{\frac{2\pi i}{q_0\dots q_{m}}}$ is  also a continuous  eigenvalue,  for any  $m$. Moreover,
each continuous eigenvalue  is  rational,   and there exists $\tilde h \in \N$  
\begin{itemize}\item
 which is coprime to each $q_n$, and 
 \item
 which divides $p: = \prod_{q\in \{ q_n:n\geq 0\}}{(q-1)}$, where the product is over the set of distinct values  of elements in $ \{ q_n:n\geq 0\}$,
 \end{itemize}
 such that the maximal equicontinuous factor of 
$(X_{\boldsymbol{\sigma}},T  )$ is the odometer $(\Z_{\tilde h, (q_n)}, +1)$.

\end{theorem}

\begin{proof}

To prove the first statement, we repeat the proof of Theorem \ref{thm:fully-essential-coboundary}, except working only with one fully essential word $ab$. We obtain
\begin{equation*}
 \lim_{n\to \infty} \lambda^{ q_0 q_1 \dots q_n} f( {\boldsymbol a})=  \lim_{n\to \infty} \lambda^{ h_{n}(a) }f( {\boldsymbol a})=f({\boldsymbol b}),
\end{equation*}
so that \eqref{constant-coboundary} holds. The rest of the proof is as in Theorem \ref{thm:fully-essential-coboundary}.

If ${\boldsymbol \sigma}$ is finitary  and recognizable,  and  $h=\lim_{n\rightarrow \infty}\lambda^{q_0 \dots q_{n}}$ exists, then by Lemma \ref{rational-eigenvalue}, $\lambda=e^{2\pi i t}$  where $t$ is rational; write  
$t= p/q$ with $q\geq 1$   and where $p$ and $q$ are non-zero coprime integers. The claim that $\lambda$ is a continuous eigenvalue follows by Theorem  \ref{Host-continuous-bounded-onesided}. 

Remark \ref{odometer-eigenvalues} tells us that $(\Z_{ (q_n)}, +1)$ is an equicontinuous factor of $(X_{\boldsymbol{\sigma}},T  )$. 
Finally, Lemma \ref{rational-eigenvalue} tells us that $\frac{p}{q}= \frac{ k}
{q_0 \dots q_m(q_{m+1}-1) }$ for some $m$. Therefore, if $\frac{p}{q}$ is an additional continuous eigenvalue, we need only add some factor of 
$(\Z/ (q_{m+1}-1)\Z, +1)$,  to $(\Z_{ (q_n)}, +1)$ to obtain possibly a larger equicontinuous factor of $(X_{\boldsymbol{\sigma}},T  )$.  
 The result follows.
 \end{proof}

The following definition  of  height in the $S$-adic case is a generalisation of Definition \ref{def:height-substitution}.   Recall the discussion there, of the relationship between the height of a constant length substitution, and the existence of a nontrivial coboundary.  The following is motivated by the  close relation between eigenvalues and height.
\begin{definition}[$S$-adic height]\label{def:height-Sadic} 
Let $\boldsymbol{\sigma} = (\sigma_n)_{n\ge0}$ be a  finitary, straight, aperiodic,  recognizable, constant-length directive sequence on $\mathcal A$,   with length sequence $(q_n)_{n\geq 0}$, where $q_n\geq 2$ for all $n\geq 2$. Suppose that  $\boldsymbol \sigma$ has a fully essential word of length two.
 Then, by  Theorem \ref{cor:fully-essential-constant-coboundarybis},    
 \begin{align*}\tilde h&:=  lcm\{q\in \mathbb N \,: \, \lambda = e^{2\pi i /q} \mbox{ is an eigenvalue 
 of  $(X_{\boldsymbol \sigma}, T)$, with $q$ coprime to each $q_n $     }\} \end{align*}
is finite. We call $\tilde h$ the {\em height}
of ${\boldsymbol \sigma}$. \end{definition}

As in the case of constant-length substitutions,  the factors of $\tilde h$  define the set of  eigenvalues  of $(X_{\boldsymbol \sigma},T)$ not determined by the sequence of lengths $(q_n)$
(as expressed in Theorem \ref{cor:fully-essential-constant-coboundarybis}).

\begin{example}\label{ex:running-fifth}
 Here we consider the constant-length directive sequence in Example~\ref{ex:running-fourth} which is recognizable, aperiodic and straight as we previously discussed.
  Since  $ba$ is a fully essential word,  Theorem \ref{cor:fully-essential-constant-coboundarybis}  tells us that if $\lambda$ is a continuous eigenvalue, then  
 $h:=\lim_{n}\lambda^{3^{n}}$ exists.  
 As we computed earlier in  Example \ref{ex:running-third}, 
  both ${\boldsymbol{ a}}$ and 
 ${\boldsymbol{ b}}$   consist of the single limit word $u= \lim_{n\rightarrow \infty} \sigma_{[0,n)}(a)$.  
 Therefore if  $f$ is an eigenfunction  associated to $\lambda$, this gives,  according to the proof of  Theorem \ref{cor:fully-essential-constant-coboundarybis}, 
 \[ f( {\boldsymbol{ b}}) h=f({\boldsymbol{ a}}),\]
 so that we must have $h= 1$.  This implies that $h$   is a strong coboundary. By Theorem \ref{cor:fully-essential-constant-coboundarybis}, all continuous eigenvalues are rational. Also, any eigenvalue must be of the form  $ \lambda= e^{\frac{2\pi i}{3^{n}\tilde h}}$where $\tilde h \in \{1,2\}.$ Since   $\lim_{k}\lambda^{3^{n}}=1$,
 one has $\tilde h=1$. This $S$-adic system therefore has trivial height,
 and so  its maximal equicontinuous factor is $(\Z_{(3)},+1)$.

 \end{example}

Recall the definition of height in Definition \ref{def:height-substitution}.
From Theorem \ref{cor:fully-essential-constant-coboundarybis} we 
recover its role in the case of a constant-length substitution, as seen in \cite{Kamae:1972, Dekking:1977}, \cite[Theorem 6.1]{Queffelec:10}.

\begin{corollary}\label{cor:Cobham0}
 Let $\sigma$ be a   straight aperiodic substitution of constant-length $q$. 
 Then the continuous eigenvalues of $(X_\sigma,T)$ are generated by $\{e^{\frac{2\pi i}{q^n}}: n\in \N \}\cup \{ e^{\frac{2 \pi i}{\tilde h}}\}$, where $\tilde h$ is the height of $\sigma$. 
 Furthermore $\tilde h$ divides $q-1$. 
\end{corollary}

Similarly, if a finitary directive sequence satisfies the conditions of Theorem \ref{cor:fully-essential-constant-coboundarybis}, and the shift is aperiodic, then  the continuous eigenvalues of $(X_{\boldsymbol{\sigma}},T)$ are generated by $\{e^{\frac{2\pi i}{q_0 \dots q_n}}: n\in \N \}\cup \{ e^{\frac{2 \pi i}{\tilde h}}\}$, where $\tilde h$ is the height of $\boldsymbol{\sigma}$.

From Theorem \ref{cor:fully-essential-constant-coboundarybis} we 
 also obtain a version of Cobham's theorem.  For more on its classical  substitutive  version, see \cite{Cobham}.

 \begin{corollary}\label{cor:cobham}
Let $\boldsymbol{\sigma} = (\sigma_n)_{n\ge0}$  and $\boldsymbol{\tilde{\sigma}} = (\tilde{\sigma}_n)_{n\ge0}$ be two  finitary, straight, aperiodic, recognizable,  constant-length directive sequences  on $\mathcal A$,    with length sequences $(q_n)_{n\geq 0}$ and
$(\tilde{q}_n)_{n\geq 0}$, where each  $q_n\geq 2$ and $\tilde {q}_n\geq 2$. Suppose  that
  each directive sequence possesses a fully essential word of length 2.  
  If there is a prime factor $p$ of infinitely many of the  lengths $q_n$ that is a prime factor of only finitely many of the lengths $\tilde q_n$,
   then $(X_{\boldsymbol{\sigma}},T  )$ and $ (X_{\boldsymbol{\tilde \sigma}},T  )$ cannot be topologically conjugate.
 \end{corollary}

\begin{proof}
The conditions we have imposed allow us to deduce from Theorem \ref{cor:fully-essential-constant-coboundarybis}  that 
 the maximal equicontinuous factor of 
$(X_{\boldsymbol{\sigma}},T  )$ is $(\Z_{h_1, (q_n)}, +1)$, and that of  $\boldsymbol{\tilde{\sigma}} = (\tilde{\sigma}_n)_{n\ge0}$ is $(\Z_{h_2, ({\tilde q_n})}, +1)$, for some finite natural numbers   $ h_1$ and $h_2$. The fact that the prime factor $p$ appears infinitely often in the lengths $q_n$ implies that $(\Z_p,+1)$ is an equicontinuous factor of $(X_{\boldsymbol{\sigma}},T  )$, while the fact that $p$ divides  only finitely often in the lengths $\tilde q_n$   implies that $(\Z_{p},+1)$ is not an equicontinuous factor of $(X_{\boldsymbol{\tilde{\sigma}}},T)$.
 Now equicontinuous factors
encode  continuous eigenvalues. In particular, for each $n$, $e^{2\pi i /p^n}$ is an eigenvalue of $(X_{\boldsymbol{\sigma}},T  )$, but only finitely many  of the $e^{2\pi i /p^n}$ can be eigenvalues of $(X_{\boldsymbol{\tilde{\sigma}}},T)$.
 The statement follows since  the set of continuous eigenvalues is a topological invariant.
\end{proof}

To see why the requirement that $p$ has to divide $q_n$ infinitely often is necessary, recall that the height of the second system can in principle absorb a $p$ which divides $q_n$ only finitely many times. For example, consider the situation where $q_n=3$ for each $n$, and $(\tilde {q}_n)= 2,3,3,3 \dots$; then it can happen that both $(X_{\boldsymbol{\sigma}},T  )$ and  $(X_{\boldsymbol{\tilde \sigma}},T  )$ have $\Z_{2,(3)}$ as maximal equicontinuous factor, in particular if $(X_{\boldsymbol{\sigma}},T  )$ has height 2. Take for example the stationary
$\boldsymbol{\sigma}$ generated by the substitution $a\mapsto aba$, $b\mapsto bac$, $c\mapsto cab$. It can be verified that this substitution has height 2, so that its maximal equicontinuous factor is $\Z_{2,(3)}$.

The following example gives a recipe  to find constant-length directive sequences which have a non-trivial height. We formulate it to generate straight, recognizable directive sequences, by modifying the first two ingredients appropriately.

\begin{example}\label{ex:nontrivial height}
Let $\mathcal S$ be any finite set of aperiodic, primitive constant-length substitutions on $\mathcal A=\{a,b,c,d \}$ where
\begin{enumerate}
\item for  all $\sigma\in \mathcal S$
  for  each  $\alpha \in \{a,b\}$,  $\sigma(\alpha)$  begins with the same letter in \{a,b\} for  all $\sigma\in \mathcal S$, and for  each  $\alpha \in \{c,d\}$,  $\sigma(\alpha)$  begins with the same letter in  $ \{c,d\}$ for  all $\sigma\in \mathcal S$,
\item each substitution in $\mathcal S$ is rotationally conjugate to a left- or right- permutative substitution, 
\item
there is a word $w$ of length two such that for each $\sigma\in \mathcal S$ there is  $\alpha \in \mathcal A$ where  $w$  occurs in  $\sigma(\alpha)$,
 \item
for each $\sigma\in \mathcal S$, any occurrence of  a letter in $\{a, b\}$ in  the image of  a letter by a  substitution is always followed by a letter in $\{c, d\}$, and any occurrence of a letter in $\{c, d\}$ is always followed by a letter  $\{a, b\}$, and 
\item each substitution in $\mathcal S$ has odd length at least three.
\end{enumerate}

Then we claim that $-1$ is an eigenvalue, provided that the resulting shift is aperiodic.
 Indeed, Condition (i) ensures that any primitive directive sequence from $\mathcal S$ is straight.
   Condition (ii) ensures that any directive sequence is recognizable, using Theorem \ref{c:rec}.
  Condition (iii) ensures that  there is a fully essential word of length two.  
  Therefore Theorem \ref{cor:fully-essential-constant-coboundarybis} applies.
  Finally, Conditions (iv) and (v) ensure that in all words of the language $\mathcal L_{\boldsymbol \sigma}$, a letter from $\{a,b\}$ is always followed by a letter from $\{c,d\}$ and conversely. Thus  -1 is an eigenvalue, as
  \[ \mathcal P = \{ [a]\cup[b], [c]\cup [d]\}\]
  is then a clopen partition which forms a Rokhlin tower of height 2, that is $X_{\boldsymbol \sigma}=A\cup B$ with  $T(A)=B$, $T(B)=A$, here with $A=[a]\cup[b]$, $B= [c]\cup [d]$.

  Take for example
 $S=\{\sigma, \tau\}$ with

\begin{align*}
     a  \stackrel{\sigma}{\mapsto} bda,  \ \ &   a  \stackrel{\tau}{\mapsto}  bcad  a
  \\
          b  \stackrel{\sigma}{\mapsto} bcb,  \ \ &   b  \stackrel{\tau}{\mapsto}  bd adb  \\
             c  \stackrel{\sigma}{\mapsto} cac,  \ \ &   c  \stackrel{\tau}{\mapsto}  cacbc
     \\    d  \stackrel{\sigma}{\mapsto} cbd,  \ \ &   d  \stackrel{\tau}{\mapsto}  cb dad.
  \end{align*}

  The above general remarks tell us that $-1$ is an eigenvalue.  The conditions of Theorem \ref {cor:fully-essential-constant-coboundarybis}  are satisfied, so 
 we could in principle have an eigenvalue $\lambda$ which is a fourth root of unity.  We show that this does not occur. If $\lambda $ is a continuous eigenvalue, then by Theorem \ref {cor:fully-essential-constant-coboundarybis},   $\lambda $ defines a weak coboundary $(h, \bar f)$.
  Assume first that $\tau$ appears infinitely often in the directive sequence. 
  Now  the fact that $\tau(b)$ appears infinitely often and contains the subword  $dad$ tells us that  $\bar f (d) h^2 = \bar f (d)$, and  this means that $h$ cannot be a fourth root of unity, and hence the eigenvalue -1 is the only eigenvalue that appears in addition to those in Remark \ref{odometer-eigenvalues}.
  Finally if  $\tau$ appears only finitely often, 
  then, since    $\boldsymbol{a}= \boldsymbol{b}$,
  we can repeat the argument above, but with the essential word $bda$, to arrive at the same conclusion.
  
  For the above family, if  $X_{\boldsymbol \sigma}$ is aperiodic, then we can  use Theorem \ref{cor:fully-essential-constant-coboundarybis} to  deduce the explicit form of the maximal equicontinuous factor space of the given $S$-adic shift. For example,  if $\sigma$ and $\tau$ above each appear infinitely often in a directive sequence from $S$, then the appropriate  group is isomorphic to  $\Z_{ 2,(15)}$.

Note that  any directive sequence chosen from  the specific $S=\{\sigma, \tau\}$
  will give a strongly  straight directive sequence (see Definition \ref{def:strongly-straight}); in general though  examples satisfying (i) above are not necessarily strongly  straight.

  Finally we remark that this kind of construction is essentially the only kind that yields height in the constant length $S$-adic case, see \cite[Corollary 5.5]{BMY}.  Also, this technique can be extended to obtain nontrivial coboundaries for  non-constant length directive sequences.

  \end{example}

\subsection{Toeplitz $S$-adic shifts and discrete spectrum}\label{Toeplitz-S-adic}

In this section we focus on a special family of constant-length $S$-adic shifts which has been extensively studied, namely those that are  {\em almost-automorphic}. We apply our previous results to this family and then compare our results to existing results in the  literature.

We  first start  with a   few definitions.    A factor $(Z,S)$  of $(X,T)$ via a map $\pi:(X,T) \rightarrow (Z,S)$ is {\em almost one-to-one} if the set 
$\{x\in X: \pi^{-1}(\{\pi(x) \})=\{x\}\}$ is dense in $X$. 
In this case, we call the system $(X,T)$ an almost one-to-one extension of $(Z,S)$.  
A system $(X,T)$ is {\em almost automorphic} if it is an almost one-to-one extension of 
a minimal equicontinuous system. Almost automorphic systems are necessarily minimal, and for minimal systems $\{x\in X: \pi^{-1}(\{\pi(x) \})=\{x\}\}$ is a dense $G_\delta$ if it is non-empty. Thus if there is some $x\in X$ such that $|\pi^{-1} (\pi(x))|=1$, then $(X,T)$ an almost one-to-one extension of $(Z,S)$.
A {\em  Toeplitz shift} is a symbolic shift $(X,\sigma)$, $X\subset \mathcal A^{\Z}$ with $\mathcal A$ finite, which is an almost automorphic extension of an odometer
$(Z,+1)$, via some factor  $\pi:(X,\sigma)\rightarrow ( Z, +1)$. Note that Gjerde and Johansen \cite{Gjerde-Johansen-2002} give a nice characterisation of Toeplitz shifts in terms of the  equal path number property for 
  their Bratteli-Vershik representations.

 Given a substitution $\sigma_n: \mathcal A\rightarrow \mathcal A^{q_n}$, we write it as  $\sigma_n= \sigma_{n,0} \sigma_{n,1} \dots \sigma_{n,q_n-1}$, a    concatenation of $q_n$ maps $\sigma_{n,i}:\mathcal A\rightarrow \mathcal A$. We say that 
  {\em $\sigma_n$ has a coincidence at i} if $|\sigma_{n,i} (\mathcal A)|=1$. We say that the directive sequence $\boldsymbol \sigma$  has a coincidence if each of of the substitutions $\sigma_n$ has  a coincidence at some $i$.
We have the following result, a straightforward generalisation of a result due  to Kamae and Dekking \cite[Theorem 7]{Kamae:1972}; see also  \cite[Theorem  7]{Dekking:1977}, which states that a measurable constant length substitution shift has discrete spectrum if and only if the {\em pure base} of the substitution has a coincidence; see the aforementioned papers for definitions and details.

\begin{theorem}\label{thm:discrete-spectrum} 
Let $\boldsymbol{\sigma} = (\sigma_n)_{n\ge0}$ be a finitary, straight, recognizable, constant-length directive sequence on $\mathcal A$,  with length sequence $(q_n)_{n\geq 0}$, where each  $q_n\geq 2$.
Suppose that 
 $\boldsymbol{\sigma} = (\sigma_n)_{n\ge0}$  has a  fully essential word of length 2,  that 
 infinitely many of the substitutions $\sigma_n$  have a coincidence, and that $X_{\boldsymbol{\sigma}} $ is aperiodic.   Then $(X_{\boldsymbol{\sigma}},T )$ is Toeplitz and uniquely ergodic. If $\mu$ is the unique invariant measure, then the measure preserving system $(X_{\boldsymbol{\sigma}},T, \mu)$   has discrete spectrum, $\boldsymbol{\sigma}$ has trivial height,  and
 the maximal equicontinuous factor of 
$(X_{\boldsymbol{\sigma}},T  )$ is  $(\Z_{(q_n)}, +1)$.
\end{theorem}

\begin{proof}
We assume that each $\sigma_n$ has a coincidence; this does not change the generality of our arguments (otherwise we just need to add a layer in the indexing of  subsequences).
By Theorem \ref{cor:fully-essential-constant-coboundarybis}, the maximal equicontinuous factor of  $(X_{\boldsymbol{\sigma}},T  )$ is  $(\Z_{\tilde h, (q_n)}, +1)$ for some $\tilde h$ which
divides $\prod_{j=1}^{N }(q_{j} -1)$ for some $N$.

 Note that  $(\Z_{ (q_n)}, +1)$ is an equicontinuous factor of the system.
 Recognizability means that for each $n$ and each $x\in X_{\boldsymbol{\sigma}}$,
there are  a unique  $m_i$ with $0\leq m_i < q_i$ and a unique $x^{(n)}\in X_{\boldsymbol{\sigma}}^{(n)}$ such that $x= T^{m_0}\sigma_0 T^{m_1}\sigma_1 \dots T^{m_{n-1}}\sigma_{n-1}(x^{(n)})$, and furthermore the data $m_0, \dots ,m_{n+1}$,  which let us desubstitute $x$ to $X_{\boldsymbol{\sigma}}^{(n+1)}$, agree with the data $m_0, \dots , m_n$, which let us desubstitute $x$ to $X_{\boldsymbol{\sigma}}^{(n)}$.
This allows us to define a maximal equicontinuous factor map
 $\pi: X_{\boldsymbol{\sigma}}\rightarrow \Z_{ (q_n)}$, where $\pi(x)=\dots m_2\,m_1\,m_0$. Let $\lambda$ be the Haar probability measure on  $\Z_{ (q_n)}$, and let $\mathcal Z$ be the set of points in $ \Z_{ (q_n)}$ which are not invertible under $\pi$. Denote by $C_j$ the set of indices $0<i<q_j -1$ such that $\sigma_j$ has a coincidence, i.e.,  $|\sigma_{j,i}(\mathcal A)|=1$. If needed to ensure that this set is nonempty, we can telescope two substitutions at a time, to ensure that the set of coincidences indices is not ``extremal". We have that $z\not \in \mathcal Z$ if  $z_j\in C_j$ infinitely often. In other words, if $[C_j]^c:=\{z\in   \Z_{ (q_n)}:  z_j \not\in [C_j]\}$, then 
\[ \mathcal Z\subset \bigcup_{k=1}^{\infty} \bigcap_{j\geq k}  [C_j]^c,\]
so that 
\begin{align*}
\lambda (\mathcal Z) &\leq \lim_{k\rightarrow \infty }\lambda \left(\bigcap_{j\geq k}  [C_j]^c\right) \leq  \lim_{k\rightarrow \infty } \prod_{j\geq k}\frac{q_j -1}{q_j} = 0,
\end{align*}
with the second inequality following because we assumed that each $\sigma_n$ has a coincidence, and the last equality following since the sequence $(q_n)$ takes on a finite number of values. It follows that  $(X_{\boldsymbol{\sigma}},T  )$ is uniquely ergodic, see for example \cite[Theorem 4.12]{ABKL-2015}. Let $\mu$ be the unique invariant measure.

To show that $(X_{\boldsymbol{\sigma}},T, \mu  )$  has discrete spectrum, we must show that $\mu(\pi^{-1}(\mathcal Z))=0$.
Define $D_j:=\{ x: (\pi (x))_j\not\in C_j\}$. Then  as above, $\pi^{-1} (\mathcal Z) \subset   \bigcup_{k=1}^{\infty}   \bigcap_{j\geq k} D_j $, and as for each $j$, $\sigma_j$ defines a $q_j$-cyclic partition of $X_{\boldsymbol{\sigma}}^{(j+1)}$, hence each element of this partition has measure $1/q_j$ and we have that $\mu ( \bigcap_{j\geq k} D_j)  
 \leq \prod_{j\geq k}\frac{q_j -1}{q_j}=0$. Therefore $\mu(\pi^{-1}(\mathcal Z))=0$.
 
 Any point $z\in \Z_{ (q_n)}$ such that $z_j\in C_j$ infinitely often satisfies $|\pi^{-1}(z)|=1$.  
For minimal systems $\{x\in X: \pi^{-1}(\pi(x)) = \{x\} \}$  is a dense $G_\delta$ set if it is non-empty (which holds from the above)
Thus $(X_{\boldsymbol{\sigma}},T, \mu  )$ is almost automorphic and  the fact that $\pi$ is somewhere injective implies that it is a maximal equicontinuous factor map, by \cite[Proposition 1.1]{Williams:84}. Thus
 its maximal equicontinuous factor is $(\Z_{ (q_n)}, +1)$; this forces $\tilde h = 1$.

\end{proof}

\begin{example}\label{ex:running-sixth}

We have shown that  the directive sequence
 \[\sigma, \tau , \sigma, \sigma, \tau,  \sigma, \sigma , \sigma, \tau,  \sigma, \sigma\dots \]
 is recognizable in Example~\ref{ex:running-first}, and also straightin Example~\ref{ex:running-third}.
   The $S$-adic shift   $(X_{\boldsymbol{\sigma}},T )$ has a  unique  ergodic measure $\mu$, see for example   Proposition \ref{prop:exact} and the comment below it.  The substitution 
$\tau$ has a  coincidence at index $0$. Since   $\tau$  appears infinitely often in   the directive sequence,  Theorem \ref{thm:discrete-spectrum} tells us that  $(X_{\boldsymbol{\sigma}},T )$ is Toeplitz and that  $(X_{\boldsymbol{\sigma}},T,\mu)$ has discrete spectrum.

\end{example}

We compare our results on constant-length $S$-adic shifts  to those in the literature   for  Toeplitz shifts, of whose spectrum   there is an extensive study. The maximal equicontinuous factor of a Toeplitz shift  is always an odometer \cite{Williams:84}.
In other words, each continuous eigenvalue of a Toeplitz shift is rational.
Furthermore, it was known that there could exist rational continuous eigenvalues   that 
are not given by the period sequence  $(p_n)_{n\geq 0}$  \cite{Downarowicz:2005}. Our contribution in the case of Toeplitz shifts is to quantify  the eigenvalues  if the Toeplitz shift is given as a constant-length $S$-adic shift, as given by Theorem \ref{cor:fully-essential-constant-coboundarybis}, and to describe them as heights, as in the constant-length substitution case, and finitely   extending Cobham's theorem to this setting.

In the measurable setting, Downarowicz and Lacroix showed that  if $K\subset {\mathbb S}^1$ is a countable subgroup containing infinitely many rationals, then there exists a minimal, uniquely ergodic Toeplitz shift  whose measurable eigenvalues equals $K$ \cite{Downarowicz-Lacroix:1996}. Therefore we see that the set of measurable eigenvalues of a Toeplitz shift can be much larger than the set of continuous eigenvalues. This is further studied by  Durand, Frank and Maass 
 \cite{Durand-Frank-Maass:2015}, who use the characterisation of  Toeplitz shifts  as expansive symbolic systems which have proper Bratteli-Vershik representations  defined by  a sequence of constant-length morphisms \cite{Gjerde-Johansen-2002}. In this setting recognizability is built into the representation, but the morphisms need to be assumed {\em proper}, which is a restriction we do not impose, and automatically implies that the $S$-adic shifts considered in  \cite{Durand-Frank-Maass:2015} are Toeplitz, whereas constant-length $S$-adic shifts are generally not; see also \cite{ADE:23} where the proper condition is extended  in terms of  coincidences. Finally, in moving to the setting of proper Bratteli-Vershik representations, the notion of height disappears.
If one considers a Toeplitz shift with a proper Bratteli-Vershik representation in which the morphisms are taken from a finite set and all of whose incidence matrices are positive, then these systems are linearly recurrent \cite{Durand:00b}, and  all their measurable eigenvalues are continuous \cite{Bressaud-Durand-Maass:2005}. 

 Mentzen studies a family of measure preserving systems on the unit interval, those of {\em exact uniform rank} \cite{Mentzen:1991};  these are the closest dynamical systems to constant-length $S$-adic shifts, as
they can be seen
 as  a geometric  realisation of such symbolic systems. We study this next.

\subsection{Measurable spectrum} \label{subsec:clmeasurable}

 The notion of {\em uniform} exact rank is investigated in \cite{Mentzen:1991}  - these are measure preserving finite rank dynamical systems on the unit interval which have a sequence of generating partitions $(\mathcal R_n)$, where $\mathcal R_n$ consists of a fixed number of Rokhlin towers, all of which are of  the same height, and where this representation is of exact finite rank.
 By Theorem \ref{thm:Bratteli-Vershik}, if an   $S$-adic shift $(X_{\boldsymbol \sigma}, T)$  is defined using a  recognizable,  directive sequence of constant length, such that $(X_{\boldsymbol \sigma}, T, \mu)$ is of exact finite rank, then  it is measurably conjugate to a uniform exact rank system as defined by Mentzen.
Mentzen shows that such a system must necessarily have rational measurable spectrum, i.e., translating his result to the $S$-adic setting, we have the following which should be compared with  Theorem \ref{cor:fully-essential-constant-coboundarybis}; his proof techniques are different.
 
 \begin{theorem}\label{mentzen} \cite[Corollary 1]{Mentzen:1991}
 Let $\boldsymbol{\sigma} = (\sigma_n)_{n\ge0}$ be a recognizable,  constant-length directive sequence  on $\mathcal A$, with length sequence $(q_n)_{n\geq 0}$.  If  $\mathsf{\boldsymbol{\sigma}}$  is of exact finite rank, then 
 every measurable eigenvalue is rational.  \end{theorem}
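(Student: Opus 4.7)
\medskip

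\noindent\textbf{Proof plan.}
The statement splits into a structural assertion, namely realizing $(X_{\boldsymbol\sigma},T,\mu)$ as a uniform exact finite rank system, and a spectral assertion about the rationality of measurable eigenvalues. My plan is to establish the first assertion by specializing the Bratteli–Vershik/cutting-and-stacking realization of Theorem~\ref{thm:Bratteli-Vershik} to the constant-length setting, and then to derive the second assertion by invoking Mentzen's \cite[Corollary~1]{Mentzen:1991} applied to this model.

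The first step is to note that the hypotheses of Theorem~\ref{thm:Bratteli-Vershik} are met: $\boldsymbol\sigma$ is recognizable by hypothesis, and straightness forces everywhere-growing. That theorem supplies a measurable conjugacy between $(X_{\boldsymbol\sigma},T,\mu)$ and a cutting-and-stacking transformation of a Lebesgue space of measurable rank at most $|\mathcal A|$. The key observation is that in the constant-length case, the heights of the Rokhlin towers $\mathcal T_n(a)$ in the partition $\mathcal Q_n$ of \eqref{eq:partisadiconesided} satisfy $h_n(a)=q_0q_1\cdots q_{n-1}$, independent of $a\in\mathcal A$. Hence, at each level $n$, all towers of the geometric model have the same height, which is precisely the defining feature of a \emph{uniform} rank representation. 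Combined with the exact finite rank assumption $\mu(\mathcal T_n(a))\ge\delta>0$, this realizes $(X_{\boldsymbol\sigma},T,\mu)$ as a uniform exact finite rank transformation on $[0,1]$ in the sense of \cite{Mentzen:1991}.

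The second step is then a direct application of Mentzen's \cite[Corollary~1]{Mentzen:1991}, which asserts that every uniform exact finite rank system has purely rational measurable point spectrum. This yields the rationality of every measurable eigenvalue of $(X_{\boldsymbol\sigma},T,\mu)$.

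For a self-contained alternative, one could adapt the martingale argument underlying Theorem~\ref{thm:sufficient-mble}: for a measurable eigenfunction $f$ with $|f|=1$, set $f_n=\mathbb E(f\mid \mathcal B_n)$, where $\mathcal B_n$ is generated by $\mathcal Q_n$. The eigen-equation gives $f_n|_{T^jB_n(a)}=\lambda^j v_n(a)$ for some $v_n:\mathcal A\to\mathbb C$, and orthogonality of martingale increments yields $\sum_n\|f_{n+1}-f_n\|_2^2<\infty$. A direct expansion in the constant-length setting gives
\[
 \|f_{n+1}-f_n\|_2^2=\sum_{b\in\mathcal A}\frac{\mu(\mathcal T_{n+1}(b))}{q_n}\sum_{k=0}^{q_n-1}|\lambda^{kL_n}v_{n+1}(b)-v_n(c_k^{(b)})|^2,
\]
where $L_n=q_0\cdots q_{n-1}$ and $\sigma_n(b)=c_0^{(b)}\cdots c_{q_n-1}^{(b)}$. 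Exact finite rank replaces $\mu(\mathcal T_{n+1}(b))$ by a positive constant and forces $|v_n(a)|\to 1$ (since $\|f_n\|_2\to\|f\|_2=1$). The main obstacle — and the reason one must be careful — is that the length sequence $(q_n)$ is \emph{not} assumed finitary, so the $1/q_n$ factor and Example~\ref{non-rational} preclude a naive appeal to Lemma~\ref{rational-eigenvalue}. The hard part of the self-contained approach is therefore to exploit the uniform lower bound on tower masses (and the finiteness of $\mathcal A$) to extract, by a pigeonhole/diagonal argument on letter pairs $(b,c_k^{(b)})$, summability along a subsequence that is \emph{dense enough} to force $\lambda^{L_n}$ to eventually equal $1$ (rather than merely tend to $1$). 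This is precisely the delicate point handled in \cite{Mentzen:1991}, which is why the cleanest route is to reduce to that reference after the first step.
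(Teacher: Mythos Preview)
Your proposal is correct and matches the paper's treatment: the paper does not give an independent proof of this theorem but simply translates Mentzen's result into the $S$-adic language, precisely via the mechanism you describe (Theorem~\ref{thm:Bratteli-Vershik} furnishes the cutting-and-stacking model, the constant-length assumption forces equal tower heights at every level, and the exact finite rank hypothesis is carried over verbatim), after which \cite[Corollary~1]{Mentzen:1991} applies directly. Your additional commentary on why a self-contained martingale argument is delicate in the absence of finitariness is accurate and a nice complement, but the paper itself does not pursue that route.
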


If we specialise to constant-length directive sequences the statement of Theorem \ref{thm:sufficient-mble}, we may relax the condition that the directive sequence be strongly  straight in a variety of ways. 
We illustrate with one that we will use.  Note that the condition that the directive sequence admits a unique right-infinite limit word, say ${\boldsymbol a}$,   and    moreover,  that $\sigma_n(a)$ starts with $a$ for  each $n$, is weaker than the condition that there is a letter $a$ such that for each $n$ and each $\alpha$, $\sigma_n(\alpha)$ starts with $a$, which is the required   condition of properness in \cite{CDHM:2003}.
For example, recall the substitutions $\sigma$ and $\tau$ from Example \ref{ex:first}, and note that any directive sequence ${\boldsymbol \sigma}$ selected from
$\{\sigma, \tau \}$ satisfies this condition, provided that $\tau$ appears infinitely often.

Recall that strongly primitive finitary directive  sequences generate uniquely ergodic shifts. We denote the unique invariant measure by $\mu$ in the following.

\begin{theorem}\label{thm:sufficient-mble-constant}
Let $\boldsymbol{\sigma} = (\sigma_n)_{n\ge0}$ be a  finitary,  
strongly primitive constant-length directive sequence  on $\mathcal A$, with length sequence $(q_n)_{n\geq 0}$,  where each $q_n \geq 2$.
Suppose that the directive sequence admits a unique right-infinite limit word, say ${\boldsymbol a}$,
   and  moreover that  $\sigma_n(a)$ starts with $a$ for each $n$. If
 $\lambda\in {\mathbb S}^1$ is a measurable eigenvalue of $(X_{\boldsymbol \sigma}, T,\mu)$,  then 
\[   \sum_{n=1}^{\infty} | \lambda^{q_0 \dots q_n}   -1     |^2 < \infty. \]
\end{theorem}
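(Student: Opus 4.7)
The plan is to adapt the proof of Theorem~\ref{thm:sufficient-mble} to the constant-length, one-sided recognizable setting. By Proposition~\ref{two-one-sided-continuous}, we may pass to the one-sided shift without loss. Let $f \in L^2(\tilde{X}_{\boldsymbol{\sigma}},\mu)$ satisfy $f \circ T = \lambda f$ with $|f|=1$, and set $f_n = \E_n(f)$ for the conditional expectation against the $\sigma$-algebra generated by the partition $\mathcal{Q}_n$; write $v_n(\alpha)$ for the value of $f_n$ on $B_n(\alpha)$. Since $\boldsymbol{\sigma}$ is finitary and strongly primitive, Proposition~\ref{prop:exact} gives exact finite rank: $\mu(\mathcal{T}_n(\alpha))\geq \delta > 0$ uniformly in $n$ and $\alpha$. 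Combining the martingale orthogonality $\sum_n \|f_n - f_{n-1}\|_2^2 < \infty$ with the eigenfunction identity on each tower yields, as in the proof of Theorem~\ref{thm:sufficient-mble},
\[
\sum_n \max_{\alpha,\beta,\, j\in t_n(\alpha,\beta)} |\lambda^j v_n(\alpha) - v_{n-1}(\beta)|^2 < \infty,
\]
and the identity $\|f_n\|_2^2 = \sum_\alpha \mu(\mathcal{T}_n(\alpha))|v_n(\alpha)|^2 \to 1$, together with $|v_n(\alpha)| \leq 1$ and $\mu(\mathcal{T}_n(\alpha))\geq \delta$, forces $|v_n(\alpha)| \to 1$ for every letter $\alpha$.

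Specialising the fundamental summability to $\alpha = a$: since $\sigma_{n-1}(a)$ starts with $a$, we have $0 \in t_n(a,a)$, hence $\sum_n |v_n(a) - v_{n-1}(a)|^2 < \infty$. In the constant-length case the elements of $t_n(a,\cdot)$ are exactly the multiples $(k-1)q_0\cdots q_{n-2}$ for $k = 1,\dots, q_{n-1}$, corresponding to the letters of $\sigma_{n-1}(a)$; taking $k=2$ with $c_n$ the second letter of $\sigma_{n-1}(a)$ yields $\sum_n |\lambda^{q_0\cdots q_{n-2}} v_n(a) - v_{n-1}(c_n)|^2 < \infty$. The triangle inequality together with $|v_n(a)|\to 1$ then reduces the conclusion $\sum_n |\lambda^{q_0 \cdots q_n} - 1|^2 < \infty$ (equivalent up to re-indexing to $\sum_n |\lambda^{q_0\cdots q_{n-2}} - 1|^2 < \infty$) to the auxiliary $\ell^2$-estimate
\[
\sum_n |v_{n-1}(a) - v_{n-1}(c_n)|^2 < \infty. \qquad (\dagger)
\]

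The main obstacle is $(\dagger)$, which plays the role of Equation~\eqref{eq:l2} in the proof of Theorem~\ref{thm:sufficient-mble}. There it was derived from strong prefix-straightness, a hypothesis we do not have here: we know only that $\sigma_n(a)$ starts with $a$ and that the right-infinite limit word $\boldsymbol{a}$ is unique, so $\boldsymbol{c_n}=\boldsymbol{a}$ holds only asymptotically, not in the $\ell^2$-quantitative form $(\dagger)$ requires. Since $\boldsymbol{\sigma}$ is finitary, $c_n$ takes only finitely many values, so it suffices to establish $(\dagger)$ along each infinite subsequence on which $c_n = c$ is constant. On such a subsequence I would iteratively apply the fundamental summability, exploiting strong primitivity---which forces $\sigma_{[n,n+r)}$ to have a positive incidence matrix for some fixed $r$---to chain together boundedly many summable inter-letter differences between $v_{n-1}(c)$ and $v_{n-1}(a)$. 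Equivalently, one may try to boundedly telescope $\boldsymbol{\sigma}$ to a directive sequence $\tilde{\boldsymbol{\sigma}}$ in which each $\tilde{\sigma}_k(\alpha)$ starts with $a$ for every letter $\alpha$, so that $\tilde{\boldsymbol{\sigma}}$ is strongly prefix-straight and Theorem~\ref{thm:sufficient-mble} applies directly; the summability then has to be translated back from the telescoped heights to the original length sequence. Verifying the existence of such a bounded telescoping---or carrying out the direct chaining argument---from finitariness, strong primitivity, and the unique-limit-word hypothesis is the technical heart of the proof.
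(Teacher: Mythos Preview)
Your approach matches the paper's exactly: follow the proof of Theorem~\ref{thm:sufficient-mble} to the fundamental summability estimate \eqref{eq:general-l2}, use $0\in t_n(a,a)$ (from the hypothesis that $\sigma_{n-1}(a)$ begins with $a$) to obtain $\sum_n|v_n(a)-v_{n-1}(a)|^2<\infty$, and invoke exact finite rank via Proposition~\ref{prop:exact} to force $|v_n(\alpha)|\to 1$. The paper's proof then finishes in a single line, asserting ``Therefore from \eqref{eq:l2-constant}, we obtain $\sum_n|\lambda^j-1|^2<\infty$'' without further justification---which is precisely the passage you have unpacked.

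You are right that the missing ingredient is your $(\dagger)$: the estimate $\sum_n|v_{n-1}(a)-v_{n-1}(c_n)|^2<\infty$, playing the role that \eqref{eq:l2} played in Theorem~\ref{thm:sufficient-mble}, where it came from strong prefix-straightness. Your two proposed remedies (chaining through boundedly many letters via strong primitivity, or boundedly telescoping to make each $\tilde\sigma_k(\alpha)$ start with $a$) are reasonable in spirit, but neither is carried out, so your proof remains incomplete at exactly the point the paper's proof leaves implicit. One small correction: since the right-infinite limit word is assumed unique, $\boldsymbol{c_n}=\boldsymbol{a}$ holds \emph{exactly} for every letter $c_n$, not merely asymptotically; what is not immediate is converting this equality of limit words into the quantitative $\ell^2$ control that $(\dagger)$ demands. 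In short, you have correctly isolated the gap---one the paper's own proof does not visibly fill either.
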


\begin{proof}

We follow the proof of Theorem \ref{thm:sufficient-mble}, to the step which yields \eqref{eq:general-l2}, i.e., 
\begin{equation*}   \sum_{n=1}^{\infty} \max_{a,b} \max_{j\in t_n(a,b)}    |     \lambda^{j}v_n(a)-v_{n-1}(b)    |^2       < \infty .\end{equation*}

Since  
 $\sigma_{n-1}(a)$ starts with $a$ for each $n$, then  $0\in 
 t_n(a,a)$,  and letting $j=0$ in \eqref{eq:general-l2}, we get
  \begin{equation}      \label{eq:l2-constant}            
   \sum_{n=1}^{\infty}    |     v_n(a)-v_{n-1}(a)    |^2       < \infty .\end{equation}

By Lemma \ref{lem:bastard-referee},
 $\min_{a\in \mathcal A}|v_n(a)|\rightarrow 1$. Therefore from  \eqref{eq:l2-constant}, we obtain, since   $q_0 \dots q_n$ belongs to $t_{n+1}(a,b)$ for some $b$, and any such $b$ satisfies $\boldsymbol a= \boldsymbol b$,   \[   \sum_{n=1}^{\infty}    |     \lambda^{ q_0 \dots q_n}- 1    |^2       < \infty .\]

\end{proof}

\begin{corollary}\label{cor:finitary-constant-sadic}
Let $\boldsymbol{\sigma} = (\sigma_n)_{n\ge0}$ be a  finitary,   recognizable and strongly primitive  constant-length directive sequence  on $\mathcal A$, with length sequence $(q_n)_{n\geq 0}$,  where each $q_n \geq 2$.
 Suppose that either
\begin{itemize}
\item ${\boldsymbol \sigma}$ is strongly straight, or
\item  there is a unique right-infinite limit word ${\boldsymbol a}$, and such that  $\sigma_n(a)$ starts with $a$ for each $n$.
\end{itemize}
Then every measurable eigenvalue for the uniquely ergodic $(X_{\boldsymbol \sigma}, T, \mu)$ is continuous.
\end{corollary}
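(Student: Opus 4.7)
The plan is to show that a measurable eigenvalue $\lambda$ is necessarily rational and then to verify that it satisfies the hypotheses of the rational-case continuity criterion, Proposition \ref{Host-continuous-bounded-rw}. The two alternative hypotheses in the corollary correspond exactly to the two sufficient conditions for the measurable-$\ell^2$ bound established in Section \ref{measurable}: Theorem \ref{thm:sufficient-mble} applies under strong prefix-straightness (first bullet) and Theorem \ref{thm:sufficient-mble-constant} applies under the second bullet.

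For the first step, I extract the needed limit from $\ell^2$-summability. In Case 2 the situation is immediate: Theorem \ref{thm:sufficient-mble-constant} gives $\sum_n |\lambda^{q_0\cdots q_n}-1|^2<\infty$, hence $\lim_n \lambda^{q_0\cdots q_n}=1$. Since $(q_n)$ is finitary and hence $(q_{n+1}-1)$ is bounded, this limit is preserved under multiplication of the exponent by $(q_{n+1}-1)$, and Lemma \ref{rational-eigenvalue} yields rationality of $\lambda$. In Case 1, Theorem \ref{thm:sufficient-mble} yields $\sum_n |\lambda^{h_n(w_n)}-1|^2<\infty$ for \emph{every} sequence $(w_n)$ of return words $w_n\in \mathcal L^{(n)}_{\boldsymbol\sigma}$ to a fixed letter (taking $a=b$ makes the condition $\boldsymbol a=\boldsymbol b$ trivial). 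Taking $w_n$ to be a shortest such return word, whose length $|w_n|$ is uniformly bounded by some $M$ by finitariness and strong primitivity, I raise $\lambda^{|w_n|q_0\cdots q_{n-1}}$ to the integer power $M!/|w_n|$ and use $|z^k-1|\leq k|z-1|$ on the unit circle to obtain $\lim_n \lambda^{M!\,q_0\cdots q_{n-1}}=1$. Applying Lemma \ref{rational-eigenvalue} to the rescaled sequence $(M!q_0,q_1,q_2,\ldots)$ then gives rationality of $\lambda$.

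Once $\lambda$ is known to be rational, I conclude via Proposition \ref{Host-continuous-bounded-rw}. Its return-word convergence hypothesis $\lim_k \lambda^{h_{n_k}(w_{n_k})}=1$ is an immediate consequence of the $\ell^2$-summability obtained in the first step, since the summands tend to zero along \emph{every} such sequence. One-sided recognizability and primitivity are standing assumptions of the corollary, and prefix-straightness is satisfied in Case 1 as a direct consequence of strong prefix-straightness; in Case 2, the proof of the proposition works verbatim using the single limit word $\boldsymbol a$ as base-point, so the full prefix-straight hypothesis can be replaced by the weaker existence of $\boldsymbol a$ guaranteed by the second bullet. This yields that $\lambda$ is a continuous eigenvalue in both cases.

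The main obstacle is the bookkeeping in Case 1: Theorem \ref{thm:sufficient-mble} controls $\lambda^{|w_n|q_0\cdots q_{n-1}}$ with varying $|w_n|$ rather than $\lambda^{q_0\cdots q_{n-1}}$ directly, so rationality must be extracted via the pigeonhole-and-exponentiate step above before Lemma \ref{rational-eigenvalue} becomes applicable. Once rationality is in hand, the remaining ingredients are already packaged in Proposition \ref{Host-continuous-bounded-rw}, so the argument parallels the way in which Host's theorem handles the substitutive case.
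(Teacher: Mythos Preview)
Your argument is correct and takes a genuinely different route from the paper's proof. The paper obtains rationality of $\lambda$ in one line by invoking Mentzen's theorem (Theorem \ref{mentzen}, an external result from \cite{Mentzen:1991}): strong primitivity together with finitariness gives exact finite rank via Proposition \ref{prop:exact}, and Mentzen's result then forces every measurable eigenvalue of a uniform exact-rank system to be rational. You instead extract rationality internally from the $\ell^2$ bounds of Theorems \ref{thm:sufficient-mble} and \ref{thm:sufficient-mble-constant} combined with Lemma \ref{rational-eigenvalue}. The advantage of your approach is that it is self-contained within the paper's toolkit and does not appeal to the finite-rank literature; the cost is the extra bookkeeping in Case~1, where the $M!$ trick is needed to absorb the varying lengths $|w_n|$ before Lemma \ref{rational-eigenvalue} applies. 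Both proofs then finish the same way, via Proposition \ref{Host-continuous-bounded-rw}.

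Two minor remarks on your write-up. First, in Case~2 the hypothesis ``there is a unique right-infinite limit word'' already implies prefix-straightness (every $\boldsymbol b$ is non-empty by compactness and contained in that single limit word), so your caveat about re-running the proof of Proposition \ref{Host-continuous-bounded-rw} with a weakened hypothesis is unnecessary. Second, your sentence ``the summands tend to zero along every such sequence'' is literally correct only in Case~1; in Case~2 the $\ell^2$ bound is on $|\lambda^{q_0\cdots q_n}-1|$, not on return-word quantities. The return-word convergence in Case~2 follows instead from the fact that, once $\lambda$ is rational and $\lambda^{q_0\cdots q_n}\to 1$, one has $\lambda^{q_0\cdots q_n}=1$ for all large $n$, whence $\lambda^{h_n(w)}=\lambda^{|w|\,q_0\cdots q_{n-1}}=1$ for every word $w$ and large $n$. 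This is the same eventual-vanishing step the paper uses, and it closes the argument cleanly.
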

\begin{proof}
By Proposition \ref{prop:exact} and Theorem \ref{mentzen}, every measurable eigenvalue $\lambda$ is rational. With the first hypothesis, we have that  the
 sum   \[\sum_{n=1}^{\infty}\max_{a,b: {\boldsymbol a}= {\boldsymbol b}} | \lambda^{h_n(w_n(a,b))}   -1     |^2\] is finite for strict transition words $w_n(a,b)$ by Theorem 
\ref{thm:sufficient-mble}. We conclude in particular that $\lambda^{h_n(w_n(a,a))}   =1$  $n\geq n_0$ and each $a$, since $\lambda$ is rational.  

Furthermore, the third condition of the definition of strongly straight tells us that any strict return word to $a$ can be contained in $\sigma_n(\alpha)$ for some $\alpha$, by telescoping boundedly if needed. This means that there is a finite set of words to which a strict return word at any level belongs. Thus $n_0$ can be taken to be independent of the given sequence of strict return words. This also implies that $\lambda^{h_n(w_n(a,a))}   -1=0$ when $w_n(a,a)$ is a (not necessarily strict) return word  to $a$.  Now,  Proposition \ref{Host-continuous-bounded-rw} gives the result.

With the second hypothesis, Theorem \ref{thm:sufficient-mble-constant} implies
\[ \sum_{n=1}^{\infty}    |     \lambda^{ q_0 \dots q_n}- 1    |^2       < \infty ,\]
  or $    \lambda^{ q_0 \dots q_n}- 1  =0$ for large enough $n$.
 Now,   Theorem \ref{Host-continuous-bounded-onesided} 
 implies that $\lambda$ is a continuous eigenvalue.
\end{proof}

\begin{example}\label{ex:running-seventh} 
We conclude by recapping the eigenvalues, measurable and topological of the $S$-adic shift of our running Example~\ref{ex:running-first}.
We have shown that  the directive sequence
 \[\sigma, \tau , \sigma, \sigma, \tau,  \sigma, \sigma , \sigma, \tau,  \sigma, \sigma,\sigma \dots \]
 is recognizable,  straight, and  if we re-write it as 
  \[\sigma, \tau \circ \sigma, \sigma,\tau \circ \sigma, \sigma , \sigma,\tau \circ \sigma, \sigma,\sigma\dots ,\]
  it is strongly primitive.
  We have also seen that   $(X_{\boldsymbol \sigma},T)$ is of exact finite rank, although it is not linearly recurrent  ~\cite{Durand:00b}, and that   its maximal equicontinuous factor is $(\Z_{(3)},+1)$. Finally, while  it is not strongly  straight,  it satisfies the second condition of Corollary \ref{cor:finitary-constant-sadic}, so $(\Z_{(3)},+1)$ is also  its Kronecker factor,  which we  define  as the maximal measure theoretic factor of the system  that is isomorphic to a rotation on a compact abelian group.

\end{example}

\bibliographystyle{amsalpha}
\bibliography{Spectrebib}

 \end{document}